\renewcommand{\section}{\@startsection%
{section}%
{1}%
{0em}%
{1.7em}%
{1.2em}%
{\normalfont\large\centering\bfseries}}
\renewcommand{\@seccntformat}[1]%
{\csname the#1\endcsname.\hspace{0.5em}}
\renewcommand{\thesection}{\arabic{section}}
\numberwithin{equation}{section}
\renewcommand\appendix{\par
\setcounter{section}{0}%
\setcounter{subsection}{0}%
\setcounter{theorem}{0}
\setcounter{table}{0}
\setcounter{figure}{0}
\gdef\thetable{\Alph{table}}
\gdef\thefigure{\Alph{figure}}
\section*{Appendix}
\gdef\thesection{\Alph{section}}
\setcounter{section}{1}}
\newtheorem{theorem}{Theorem}[section]
\newtheorem{proposition}[theorem]{Proposition}
\newtheorem{lemma}[theorem]{Lemma}
\newtheorem{corollary}[theorem]{Corollary}
\theoremstyle{definition}
\newtheorem{definition}{Definition}
\newtheorem{remark}{Remark}
\newcommand{\reals}{\mathbb{R}}
\newcommand{\ie}{\emph{i.e.\kern.2em}}
\newcommand{\cf}{\emph{cf.\kern.2em}}
\newcommand{\viz}{\emph{viz.\kern.2em}}
\newcommand{\eg}{\emph{e.g.\kern.2em}}
\newcommand{\nats}{\mathbb{N}}
\newcommand{\complex}{\mathbb{C}}
\newcommand{\norm}[1]{\left\|#1\right\|}
\newcommand{\I}{{\rm i}}
\newcommand{\inner}[2]{\left\langle#1,#2\right\rangle}
\newcommand{\cH}{{\mathcal H}}
\newcommand{\cc}[1]{\overline{#1}}
\DeclareMathOperator{\re}{Re}
\DeclareMathOperator{\im}{Im}
\DeclareMathOperator{\dom}{dom}
\DeclareMathOperator{\ran}{ran}
\DeclareMathOperator*{\Span}{span}
\begin{document}

\title[Functional model for boundary-value problems]{Functional model for boundary-value problems}
\author{Kirill D. Cherednichenko}
\address{Department of Mathematical Sciences, University of Bath, Claverton Down, Bath, BA2 7AY, United Kingdom}
\email{cherednichenkokd@gmail.com}
\author{Alexander V. Kiselev}
\address{Departamento de F\'{i}sica Matem\'{a}tica, Instituto de Investigaciones en Matem\'aticas Aplicadas y en Sistemas, Universidad Nacional Aut\'onoma de M\'exico, C.P. 04510, M\'exico D.F. {\sc and} International Research Laboratory ``Multiscale Model Reduction'', Ammosov North-Eastern Federal University, Yakutsk, Russia}
\email{alexander.v.kiselev@gmail.com}
\author{Luis O. Silva}
\address{Departamento de F\'{i}sica Matem\'{a}tica, Instituto de Investigaciones en Matem\'aticas Aplicadas y en Sistemas, Universidad Nacional Aut\'onoma de M\'exico, C.P. 04510, M\'exico D.F. {\sc and} Department of Mathematical Sciences, University of Bath, Claverton Down, Bath, BA2 7AY, United Kingdom}
\email{silva@iimas.unam.mx}

\subjclass[2010]{47A45, 47F05, 35P25}

\keywords{Functional model;
Extensions of symmetric operators; Generalised boundary triples;
Boundary value problems; Spectrum}

\begin{abstract}
We develop a functional model for operators arising in the study of boundary-value
  problems of materials science and mathematical physics. We then provide explicit formulae for the resolvents of the associated extensions of symmetric operators in terms of
 appropriate Dirichlet-to-Neumann maps, which can be utilised in the analysis of the properties of parameter-dependent problems, including the study of their spectra.
\end{abstract}

\maketitle

\section{Introduction}

The need to understand and quantify the behaviour of solutions to
problems of mathematical physics has been central in driving the
development of theoretical tools for the analysis of boundary-value
problems (BVP). On the other hand, the second part of the last century
witnessed several substantial advances in the abstract methods of
spectral theory in Hilbert spaces, stemming from the groundbreaking
achievement of John von Neumann in laying the mathematical foundations
of quantum mechanics. Some of these advances have made their way into
the broader context of mathematical physics
\cite{MR0217440,Anderson,BetheSommerfeld}. In spite of these obvious
successes of spectral theory applied to concrete problems, the
operator-theoretic understanding of BVP has been lacking. However, in
models of short-range interactions, the idea of replacing the original
complex system by an explicitly solvable one, with a zero-radius potential (possibly with an internal structure), has proved to be highly
valuable
\cite{Berezin_Faddeev,Pavlov_internal_structure,Pavlov_Helmholtz_resonator, MR0080271,MR0024574,MR0024575, MR0051404}. This
facilitated an influx of methods of the theory of extensions (both
self-adjoint and non-selfadjoint) of symmetric operators to problems
of mathematical physics, culminating in the theory of boundary
triples.

The theory of boundary triples introduced in \cite{Gor, MR1087947, MR0365218, MR0592863}
has been successfully applied to the spectral analysis of BVP for
ordinary differential operators and related setups, {\it e.g.} that of
finite ``quantum graphs'', where the Dirichlet-to-Neumann maps act on
finite-dimensional ``boundary" spaces, see
\cite{CherednichenkoKiselevSilva} and references therein. However, in
its original form this theory is not suited for dealing with BVP for partial differential equations (PDE), see \cite[Section
7]{BMNW2008} for a relevant discussion. The key obstacle to such analysis is the
lack of boundary traces $\Gamma_0u$ and $\Gamma_1u$ for functions $u:\Omega\to{\mathbb R}$ (where $\Omega$ is a bounded open set with a smooth boundary) in the domain of the maximal operator $A$ corresponding to the differential expression considered ({\it e.g.} the operator $-\Delta$ on the domain of $L^2(\Omega)$-functions $u$ such that $\Delta u$ is in $L^2(\Omega)$) entering the Green identity
\begin{equation*}
\langle Au,v \rangle_{L^2(\Omega)} - \langle u,Av \rangle_{L^2(\Omega)} = \langle \Gamma_1 u, \Gamma_0 v \rangle_{L^2(\partial\Omega)}- \langle \Gamma_0 u,\Gamma_1 v  \rangle_{L^2(\partial\Omega)},\qquad u, v\in{\rm dom}(A),
\end{equation*}
in other words ${\rm dom}(A)\not\subset{\rm dom}(\Gamma_0)\cap{\rm dom}(\Gamma_1).$
Recently, when the works \cite{Grubb_Robin,Grubb_mixed,BehrndtLanger2007,Gesztesy_Mitrea, Ryzh_spec, BMNW2008} started to appear, it has transpired that, suitably modified, the boundary triples approach nevertheless admits a natural generalisation to the BVP setup, see also the seminal contributions by
M.\,S.\,Birman \cite{Birman}, L.\,Boutet de Monvel \cite{BdeM}, M.\,S.\,Birman and M.\,Z.\,Solomyak \cite{Birman_Solomyak}, G.\,Grubb \cite{Grubb_classic}, and M.\,Agranovich \cite{Agranovich}, which provide an analytic backbone for the related operator-theoretic constructions.

In all cases mentioned above, one can see the fundamental r\^{o}le of
a certain Herglotz operator-valued analytic function, which in
problems where a boundary is present (and sometimes even without an
explicit boundary \cite{Amrein-Pearson}) turns out to be a natural
generalisation of the classical notion of a Dirichlet-to-Neumann
map. The emergence of this object yields the possibility to apply to BVP advanced methods of complex analysis in conjunction with abstract methods of operator and spectral theory, which in turn sheds light
on the intrinsic interplay between the mentioned abstract frameworks
and concrete problems of interest in modern mathematical physics.


The present paper is a development of the recent activity \cite{KCher,KCherYulia,KCherYuliaNab,CherErKis} aimed at implementing the
above strategy in the context of problems of materials science and
wave propagation in inhomogeneous media. Our recent papers
\cite{ChKS_OTAA,CherednichenkoKiselevSilva} have shown that the language of
boundary triples is particularly fitting for direct and inverse
scattering problems on quantum graphs, as one of the key challenges to their analysis stems from the presence of interfaces through which
energy exchange between different components of the medium takes
place.  In the present work we continue the research initiated in
these papers, adapting the technology so that BVP, especially those
stemming from materials sciences, become within reach. As in
\cite{ChKS_OTAA,CherednichenkoKiselevSilva}, the ideas of \cite{Drogobych,MR573902} concerning the functional model
allow one to efficiently incorporate into the analysis information about the mentioned energy exchange, by employing a suitable Dirichlet-to-Neumann map.
In our analysis of BVP, 
we adopt the approach to the operator-theoretic treatment of BVP suggested by \cite{Ryzh_spec}, which appears to be particularly convenient for obtaining sharp quantitative information about scattering properties of the medium, {\it cf.} {\it e.g.} \cite{CherErKis}, where this same approach is used as a framework for the asymptotic analysis of homogenisation problems in resonant composites.

We next outline the structure of the paper. In Section
\ref{sec:triples-bvp} we recall the main points of the abstract
construction of \cite{Ryzh_spec}  and introduce the key objects for the analysis we carry out later on, such as the dissipative operator $L$ at the centre of the functional model. In Section \ref{sec:dilation-boundary-problem} we construct the minimal dilation of $L,$ based on the ideas of \cite{MR2330831}, which in the context of extensions of symmetric operators followed the earlier foundational work \cite{MR573902}. Using the functional model framework thus developed, in Section \ref{sec:functional-model} we construct a new version of Pavlov's ``three-component'' functional model for the dilation \cite{MR0510053} and pass to his ``two-component'', or ``symmetric", model \cite{Drogobych} (see also \cite{MR573902,MR2330831}), based on the notion of the characteristic function for $L,$ which is computed explicitly in terms of the $M$-operator introduced in Section \ref{sec:triples-bvp}. In Section \ref{resolvent_section} we develop formulae for the resolvents of  boundary-value operators for a range of boundary conditions $\alpha\Gamma_0u+\beta\Gamma_1u=0,$ with
$\alpha,$ $\beta$ from a wide class of operators in $L^2(\partial\Omega),$ including those relevant to applications.  The last two sections are devoted to the applications of the framework: based on the derived formulae for the resolvents, in Section \ref{model_sec} we establish the resolvent formulae for the operators of boundary-value problems belonging the class discussed earlier in the functional spaces stemming from the functional model, and in Section \ref{sec:example} we apply these formulae to obtain a description of the operators of BVPs in a class of Hilbert spaces with generating kernels. 

\section{Ryzhov triples for BVP}
\label{sec:triples-bvp}

In this section we follow \cite{Ryzh_spec} in developing an operator framework suitable for
dealing with boundary-value problems. The starting point is a
self-adjoint operator $A_0$ in a separable Hilbert space $\cH$ with
$0\in\rho(A_0)$, where $\rho(A_0),$ as usual, denotes the resolvent
set of $A_0$. Alongside  $\cH$, we consider an auxiliary Hilbert
space $\mathcal{E}$ and a bounded operator $\Pi: \mathcal{E}\to\mathcal{H}$
such that
\begin{equation}
  \label{eq:properties-a-pi}
  \dom(A_0)\cap\ran(\Pi)=\{0\}\quad\text{and}\quad \ker(\Pi)=\{0\}.
\end{equation}
Since $\Pi$ has a trivial kernel, there is a left inverse $\Pi^{-1},$ so that
$\Pi^{-1}\Pi=I_{\mathcal{E}}.$
We define
\begin{equation}
\label{eq:definition-A}
\begin{split}
  \dom(A)&:=\dom(A_0)\dotplus\ran(\Pi),\\
      A&:A_0^{-1}f+\Pi\phi\mapsto f,
\qquad f\in\mathcal{H}, \phi\in\mathcal{E},
\end{split}
\end{equation}
\begin{equation}
\label{eq:definition-G-0}
\begin{split}
  \dom(\Gamma_0)&:=\dom(A_0)\dotplus\ran(\Pi),\\
      \Gamma_0&:A_0^{-1}f+\Pi\phi\mapsto \phi,
\qquad f\in\mathcal{H}, \phi\in\mathcal{E},
\end{split}
\end{equation}
where neither $A$ nor $\Gamma_0$ is assumed closed or indeed closable.
The operator given in (\ref{eq:definition-A}) is the null extension of
$A_{0}$, while (\ref{eq:definition-G-0}) is the null extension of
$\Pi^{-1}$.
Note also that
\begin{equation}
 \label{eq:equality-domains-gamma-a}
  \ker(\Gamma_0)=\dom(A_0)\,.
\end{equation}
For $z\in\rho(A_0)$, consider the \emph{abstract} spectral boundary-value problem
\begin{equation}
 \label{eq:first-abstract-bv-problem}
  \begin{cases}
    Au = zu,\\[0.2em]
    \Gamma_0 u = \phi,\qquad \phi\in\mathcal{E},
  \end{cases}
\end{equation}
where the second equation is seen as a boundary
condition.  As asserted in \cite[Theorem 3.1]{Ryzh_spec}, there is
a unique solution $u$ of the boundary-value problem
(\ref{eq:first-abstract-bv-problem}) for any
$\phi\in\mathcal{E}$. Thus, there is an operator (clearly linear)
which assigns to any $\phi\in\mathcal{E}$ the solution $u$ of
(\ref{eq:first-abstract-bv-problem}), referred to as the
solution operator\footnote{The operator-valued function $\gamma$ is also sometimes referred to as the $\gamma$-field.} for $A$ and denoted by $\gamma(z).$ An explicit
expression for it in terms of $A_0$ and $\Pi$ can
be obtained as follows.  
Using the
fact that $A\supset A_0,$ one can show (see \cite[Remark 3.3]{Ryzh_spec}) that for all $\phi\in\mathcal{E}$ one has 
\[
\Pi\phi+z(A_0-zI)^{-1}\Pi\phi\in\ker(A-zI),\qquad\Gamma_0\bigl(\Pi\phi+z(A_0-zI)^{-1}\Pi\phi\bigr)=\phi,
\]
and therefore
\begin{equation}
  \label{eq:solution-operator}
  \gamma(z)\phi=(I+z(A_0-zI)^{-1})\Pi\phi\,.
\end{equation}

Furthermore, note that
\begin{equation}
  \label{eq:other-form-inverse}
  I+z(A_0-zI)^{-1}=(I-zA_0^{-1})^{-1},
\end{equation}
and so
(\ref{eq:definition-G-0}), (\ref{eq:solution-operator}) immediately
imply
\begin{equation}
 \label{eq:gamma-0-solution-operator}
  \Gamma_0\gamma(z)=I_{\mathcal{E}}\,.
\end{equation}
By (\ref{eq:solution-operator}),
one has
$\ran(\gamma(z))\subset\ker(A-zI)$, but the inverse inclusion also holds. Indeed,  taking a vector $u\in\ker(A-zI)$ and
writing it in the form $u=A_0^{-1}f +\Pi\phi$, one obtains
\begin{equation*}
  0=(A-zI)(A_0^{-1}f +\Pi\phi)=(I-zA_0^{-1})f-z\Pi\phi,
\end{equation*}
which yields $f=z(I-zA_0^{-1})^{-1}\Pi\phi$. Thus,
\begin{equation*}
  u=A_0^{-1}f
  +\Pi\phi=\left[zA_0^{-1}(I-zA_0^{-1})^{-1}+I\right]\Pi\phi
=(I-zA_0^{-1})^{-1}\Pi\phi\,.
\end{equation*}
In view of (\ref{eq:solution-operator}), (\ref{eq:other-form-inverse}), the last expression shows
that $u\in\ran(\gamma(z))$. Putting together the above, one arrives at
\begin{equation}
  \label{eq:range-solution-kernel-A}
  \ran(\gamma(z))=\ker(A-zI)\,.
\end{equation}
We remark that, since $A$ is not required to be closed,
$\ran(\gamma(z))$ is not necessarily a subspace. This is precisely the kind of situation that commonly occurs in the analysis of BVPs.

In what follows, we consider (abstract) BVP of the form
(\ref{eq:first-abstract-bv-problem}) associated with the operator $A$, with variable boundary conditions. To this
end, for a self-adjoint operator $\Lambda$ in $\mathcal{E},$ define
\begin{equation}
\label{eq:definition-G-kappa}
\begin{split}
  \dom(\Gamma_1)&:=\dom(A_0)\dotplus\Pi\dom(\Lambda),\\[0.3em]
  \Gamma_1&:A_0^{-1}f+\Pi\phi\mapsto\Pi^*f+\Lambda\phi,\qquad
  f\in\mathcal{H},\phi\in\dom(\Lambda).
\end{split}
\end{equation}
The operator $\Lambda$ can thus be seen as a parameter for the boundary operator $\Gamma_1.$

On the basis of \eqref{eq:solution-operator}, one obtains from
\eqref{eq:definition-G-kappa}  (see \cite[Equation 3.7]{Ryzh_spec}) that
\begin{equation}
  \label{eq:gamma-star-Gamma-1}
  \gamma(\cc{z})^*=\Gamma_1(A_0-zI)^{-1},\qquad z\in\rho(A_0).
\end{equation}
Also, according to \cite[Theorem 3.2]{Ryzh_spec}, the following Green's type identity holds:
\begin{equation}
  \label{eq:green-formula}
  \inner{Au}{v}_{\mathcal{H}}-\inner{u}{Av}_{\mathcal{H}}
=\inner{\Gamma_{1}u}{\Gamma_{0}v}_{\mathcal{E}}
-\inner{\Gamma_{0}u}{\Gamma_{1}v}_{\mathcal{E}},\qquad u,v\in\dom(\Gamma_{1})\,.
\end{equation}
The spectral BVP (\ref{eq:first-abstract-bv-problem}) is thus described by the triple
$(A_{0},\Pi,\Lambda)$, introduced by Ryzhov \cite{Ryzh_spec}.
His setup stems from the
Birman-Krein-Vishik theory
\cite{MR0080271,MR0024574,MR0024575,MR0051404}, rather than the theory
of boundary triples \cite{Gor}.
\begin{definition}
  \label{def:m-function}
  For a given triple $(A_{0},\Pi,\Lambda)$, define the operator-valued $M$-function associated with $A_0$ as follows: for any $z\in\rho(A_0)$,
  the operator $M(z)$ in $\mathcal{E}$  is defined on the domain $\dom(M(z)):=\dom(\Lambda),$ and its action is given by
  \begin{equation*}
    M(z): \phi\mapsto\Gamma_1\gamma(z)\phi,\qquad \phi\in{\rm dom}\bigl(M(z)\bigr).
  \end{equation*}
\end{definition}

The above abstract framework is illustrated (see \cite{Ryzh_spec} for details) by the
classical setup where $A_{0}$ is the Dirichlet Laplacian on a
bounded domain $\Omega$ with smooth boundary $\partial\Omega,$ so $A_0$ is self-adjoint on $\dom(A_0)=W^2_2(\Omega)\cap\mathring{W}^1_2(\Omega)$. In this
case $\Pi$ is simply the Poisson operator of harmonic lift, its left
inverse is the operator of boundary trace for harmonic functions and
$\Gamma_{0}$ is the null extension of the latter to
$\bigl[W^2_2(\Omega)\cap \mathring{W}^1_2(\Omega)\bigr]\dotplus\Pi
L^{2}(\partial\Omega)$. Furthermore, $\Lambda$ can be chosen as the
Dirichlet-to-Neumann map\footnote{
For convenience, we define the
  Dirichlet-to-Neumann map via $-\partial u/\partial n\vert_{\partial\Omega}$ instead of the more common $\partial u/\partial n\vert_{\partial\Omega}$. As a side note, we mention that this is obviously not the only choice for the operator $\Lambda.$ In particular, the trivial option $\Lambda=0$ is always possible. Our choice of $\Lambda$ is motivated by our interest in the analysis of classical boundary conditions.} which maps any function
$\phi\in W^1_2(\Omega)=:\dom(\Lambda)$ to
$-(\partial u/\partial n)\vert_{\partial\Omega}$, where $u$ is the
solution of the boundary-value problem
\begin{equation*}
  \begin{cases}
    \Delta u=0,\\
    u\vert_{\partial\Omega}=\phi,
  \end{cases}
\end{equation*}
(see {\it e.g.} \cite{Taylor_tools}). Due to the choice of $\Lambda$, it follows from (\ref{eq:definition-G-kappa}) that
\begin{equation}
  \label{eq:Gamma1-first-example}
   \dom(\Gamma_{1})=\bigl[ W^2_2(\Omega)\cap
     \mathring{W}^1_2(\Omega)\bigr]\dotplus\Pi W^1_2(\partial\Omega),\qquad
  \Gamma_{1}u=-\frac{\partial u}{\partial n}\biggr\vert_{\partial\Omega}.
\end{equation}
Note that (\ref{eq:Gamma1-first-example}) follows from the fact
that $\Pi^{*}f=-(\partial u/\partial n)\vert_{\partial\Omega}$ for
$u=A_{0}^{-1}f$. Therefore, the $M$-operator $M(z),$ $z\in\rho(A_{0}),$ is the
Dirichlet-to-Neumann map $\phi\mapsto-(\partial u/\partial n)\vert_{\partial\Omega}$ of the spectral boundary-value problem
(\ref{eq:first-abstract-bv-problem}), \ie $u\in\bigl[W^2_2(\Omega)\cap
\mathring{W}^1_2(\Omega)\bigr]\dotplus\Pi L^2(\partial\Omega)$
is a solution of
\begin{equation*}
  \begin{cases}
    \Delta u=zu,\\
    u\vert_{\partial\Omega}=\phi,
  \end{cases}
\end{equation*}
where $\phi$ belongs to $L^{2}(\partial\Omega),$ and $M(z)$ is understood as an unbounded operator\footnote{More precisely, $M(z)$ 
is the sum of an unbounded self-adjoint operator and a bounded one, which will be obvious from (\ref{M_operator}).} defined on $\dom(M(z))=W^1_2(\partial\Omega).$

This example shows how all the classical objects of BVP appear naturally from the triple $(A_{0},\Pi,\Lambda).$ In particular, it is worth noting how the energy-dependent Dirichlet-to-Neumann map $M(z)$ is ``grown'' from its ``germ'' $\Lambda$ at $z=0.$
Returning to the abstract setting and taking into account
(\ref{eq:definition-G-kappa}), one concludes from
Definition~\ref{def:m-function} that
\begin{equation}
\label{M_operator}
  M(z)=\Lambda + z\Pi^*(I-zA_0^{-1})^{-1}\Pi.
\end{equation}
From this equality, one verifies directly that
\begin{equation}
  \label{eq:difference-m-functions}
  \begin{split}
  M(z)-M(w)&=\Pi^*\left[z(I-zA_0^{-1})^{-1}-w(I-wA_0^{-1})^{-1}\right]\Pi
  =(z-w)\gamma(\cc{z})^*\gamma(w),\qquad z, w\in\rho(A_0).
\end{split}
\end{equation}
Also, due to the self-adjointness of
$\Lambda$, one has
\begin{equation}
 \label{eq:M-herglotz}
  M^*(z)=M(\cc{z}).
\end{equation}
The properties (\ref{eq:difference-m-functions}) and (\ref{eq:M-herglotz}) together imply that $M$ is an unbounded operator-valued Herglotz function,
\ie, $M(z)-M(0)$ is analytic, and $\Im M(z)\ge 0$ whenever
$z\in\complex_+$. It is shown in \cite[Theorem 3.3(4)]{Ryzh_spec} that
\begin{equation*}
  M(z)\Gamma_{0}u=\Gamma_{1}u\qquad\forall u\in\ker(A-zI)\cap\dom(\Gamma_{1}).
\end{equation*}
In this work we consider extensions (self-adjoint and non-selfadjoint) of the ``minimal'' operator
\begin{equation}
 \label{eq:B-definition}
  \widetilde{A}:=A_0\vert_{\ker(\Gamma_1)}
\end{equation}
that are restrictions of $A$.
It is proven in \cite[Section 5]{Ryzh_spec} that $\widetilde{A}$ is
symmetric with equal deficiency indices. Moreover,
\cite[Remark 5.1]{Ryzh_spec} asserts that
\begin{equation*}
  \dom(\widetilde{A})=A_0^{-1}[\ran(\Pi)^\perp],
\end{equation*}
so $\widetilde{A}$ does not depend on the parameter operator $\Lambda,$ contrary to what could be surmised from (\ref{eq:B-definition}).

Still following \cite{Ryzh_spec}, we let $\alpha$ and $\beta$ be linear operators in 
$\mathcal{E}$ such that $\dom(\alpha)\supset\dom(\Lambda)$
and $\beta$ is bounded on $\mathcal{E}$.
Additionally, assume that
$\alpha+\beta\Lambda$ is closable and denote its closure by $\ss.$
Consider
the linear set
\begin{equation}
  \label{eq:extension-of-boundary-conditions}
 {\mathcal H}_{\text \ss}:=\left\{A_{0}^{-1}f\dotplus\Pi\phi:f\in\mathcal{H},\,\,\phi
    \in\dom(\ss)\right\}.
\end{equation}
Following \cite[Lemma 4.1]{Ryzh_spec}, the identity
\[
(\alpha\Gamma_{0}+\beta\Gamma_{1})(A_0^{-1}f+\Pi\phi)=\beta\Pi^*f+(\alpha+\beta\Lambda)\phi,\qquad f\in {\mathcal H},\ \phi\in\dom(\Lambda),
\]
implies that $\alpha\Gamma_{0}+\beta\Gamma_{1}$ is well defined on
$\dom(A_{0})\dotplus\Pi\dom(\Lambda).$
The assumption that $\alpha+\beta\Lambda$ is closable is used to
extend the domain of definition of $\alpha\Gamma_{0}+\beta\Gamma_{1}$ to the set
\eqref{eq:extension-of-boundary-conditions}. Moreover, one verifies that
${\mathcal H}_{\text \ss}$
is a Hilbert space with respect to the norm
\[
\Vert u\Vert_{\ss}^2:=\Vert f\Vert_{\mathcal H}^2+\Vert\phi\Vert_{\mathcal E}^2+\Vert\ss\phi\Vert_{\mathcal E}^2,\quad u=A_0^{-1}f+\Pi\phi.
\]
It follows that the constructed extension $\alpha\Gamma_0+\beta\Gamma_1$ is a bounded operator from ${\mathcal H}_{\ss}$ to $\mathcal E.$

According to
\cite[Theorem 4.1]{Ryzh_spec}, if the operator
$\overline{\alpha+\beta M(z)}$ is boundedly invertible for
$z\in\rho(A_{0})$, the spectral boundary-value problem
\begin{equation}
 \label{eq:general-spectral-bv-problem}
    \begin{cases}
    (A- zI)u=f,\\[0.3em]
    \left(\alpha\Gamma_0+\beta\Gamma_{1}\right) u
    = \phi,\qquad f\in\mathcal{H},\,\,\phi\in\mathcal{E},
  \end{cases}
\end{equation}
has a unique solution $u\in{\mathcal H}_{\ss},$ where, as above, $\alpha\Gamma_0+\beta\Gamma_{1}$ is a bounded operator on
${\mathcal H}_{\ss}.$
Under the same
hypothesis of $\overline{\alpha+\beta M(z)}$ being
boundedly invertible for $z\in\rho(A_{0}),$ it follows from
\cite[Theorem 5.1]{Ryzh_spec} that the function
\begin{equation}
  (A_{0}-z I)^{-1}-(I-zA_{0}^{-1})^{-1}\Pi\bigl(\overline{\alpha+\beta M(z)}\bigr)^{-1}\beta\Pi^{*}
(I-zA_{0}^{-1})^{-1}
\label{Krein_formula1}
\end{equation}
is the resolvent of a closed operator
$A_{\alpha\beta}$ densely
defined in $\mathcal{H}$. Moreover,
$\widetilde{A}\subset A_{\alpha\beta}\subset A$ and
$\dom(A_{\alpha\beta})\subset\{u\in{\mathcal H}_{\ss}:
(\alpha\Gamma_{0}+\beta\Gamma_{1})u=0\}$.

Among the extensions $A_{\alpha\beta}$ of $\widetilde{A}$, we
single out the operator
\begin{equation}
\label{eq:operator-L}
    L:= A_{-\I I\,I},
\end{equation}
that is, $\alpha=-\I I$ and $\beta=I$.  Since in this
case $\alpha$ and $\beta$ are scalar operators, and
$\dom(\Gamma_{1})\subset\dom(\Gamma_{0})$, by virtue of (\ref{eq:extension-of-boundary-conditions}) one has
\begin{equation}
  \label{eq:inclusion}
  \dom(L)\subset\dom(\Gamma_{1})\,.
\end{equation}
The definition of $\dom(L)$ implies that for all
$h\in\mathcal{H},$ $z\in{\mathbb C}_-,$
\begin{align*}
  0&=(\Gamma_{1}-\I\Gamma_{0})(L-z I)^{-1}h
  =\Gamma_{1}(L-zI)^{-1}h-\I\Gamma_{0}[(L-zI)^{-1}-(A_{0}-zI)^{-1}]h\\[0.4em]
  &=M(z)\Gamma_{0}[(L-zI)^{-1}-(A_{0}-zI)^{-1}]h
    +\Gamma_{1}(A_{0}-zI)^{-1}h
    -\I\Gamma_{0}[(L-zI)^{-1}-(A_{0}-zI)^{-1}]h\\[0.4em]
  &=M(z)\Gamma_0(L-zI)^{-1}h+\Gamma_1(A_0-zI)^{-1}h-\I\Gamma_0(L-zI)^{-1}h,
\end{align*}
since, by \eqref{eq:equality-domains-gamma-a} and the fact that
$L,A_{0}\subset A,$ one has
\begin{equation*}
   \bigl[(L-zI)^{-1}-(A_{0}-zI)^{-1}\bigr]h\in\ker(A-z I),
   \qquad(A_{0}-zI)^{-1}h\in\ker(\Gamma_{0}).
\end{equation*}
Thus
\begin{equation}
  \label{eq:Gamma-0-Gamma-1-q}
 \begin{split}
   \Gamma_0(L-z I)^{-1}&=-(M(z)-\I I)^{-1}\Gamma_1(A_0-zI)^{-1},
   \quad z\in\complex_-,\\[0.3em]
   \Gamma_0(L^*-z I)^{-1}&=-(M(z)+\I I)^{-1}\Gamma_1(A_0-zI)^{-1},
   \quad z\in\complex_+,
\end{split}
\end{equation}
where the second equality is deduced in the same way as the first. In what follows, we will use the following relations, which are obtained by combining
(\ref{eq:gamma-star-Gamma-1}) and (\ref{eq:Gamma-0-Gamma-1-q}):
\begin{equation}
  \label{eq:Gamma-0-L-through-M-gamma}
\begin{split}
  \Gamma_0(L-z I)^{-1}&=-(M(z)-\I I)^{-1}\gamma(\cc{z})^*,
  \quad z\in\complex_-,\\[0.3em]
  \Gamma_0(L^*-z I)^{-1}&=-(M(z)+\I I)^{-1}\gamma(\cc{z})^*,
  \quad z\in\complex_+.
\end{split}
\end{equation}

It is proven in
\cite[Theorem 6.1]{Ryzh_spec} that the operator $L$ of formula (\ref{eq:operator-L}) is dissipative and boundedly invertible (hence maximal). We recall that
a densely defined operator $L$ in $\cH$
is called {\it dissipative} if
\begin{equation*}
  \im\inner{Lf}{f}\ge 0\quad \quad\forall f\in\dom(L).
\end{equation*}
A dissipative operator $L$ is said to be {\it maximal} if
$\complex_{-}\subset\rho(L)$.
Maximal dissipative
operators are closed, and any dissipative operator admits a maximal
extension. 

Furthermore, the function
\begin{equation}
  \label{eq:def-characteristic-of-L}
  S(z):=(M(z)-\I I)(M(z)+\I I)^{-1}
  =I-2\I(M(z)+\I I)^{-1},\qquad z\in\complex_+,
\end{equation}
turns out to be the characteristic function of $L,$ see \cite{livshitz, Strauss_survey}. Since $M$ is a Herglotz
function (see (\ref{eq:M-herglotz})), one has the following formula:
  \begin{equation}
 \label{eq:adjoint-characteristic-of-L}
 S^*(\cc{z}):=\left[S(\cc{z})\right]^{*}
 =I+2\I(M^{*}(\cc{z})-\I I)^{-1}=I+2\I(M(z)-\I I)^{-1},\qquad z\in\complex_-.
  \end{equation}
  We remark that the function $S$ is analytic in $\complex_+$ and, for each
  $z\in\complex_+$, the mapping $S(z):\mathcal{E}\to\mathcal{E}$ is a
  contraction. Therefore, $S$ has nontangential limits almost
  everywhere on the real line in the strong operator topology \cite{MR2760647}.

Recall that a closed
operator $L$ is said to be \emph{completely non-selfadjoint} if there
is no subspace reducing $L$ such that the part of $L$ in this subspace is self-adjoint. We refer to a completely non-selfadjoint symmetric operator 
as \emph{simple}.

\begin{proposition}
\label{prop:completely-nonselfadjoint}
If the symmetric operator $\widetilde{A}$ of (\ref{eq:B-definition}) is simple, then the
dissipative operator $L$ is completely non-selfadjoint.
  \end{proposition}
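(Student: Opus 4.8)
The plan is to argue directly against the definition of complete non-selfadjointness: suppose a closed subspace $\mathcal{H}_0\subset\cH$ reduces $L$ and $L_0:=L\vert_{\mathcal{H}_0}$ is self-adjoint, and let $P_0$ denote the orthogonal projection onto $\mathcal{H}_0$; the goal is to show $\mathcal{H}_0=\{0\}$, which is exactly the asserted complete non-selfadjointness of $L$.

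The first step extracts the consequence of the two boundary conditions at play. Since $\mathcal{H}_0$ reduces $L$ it also reduces $L^*$ (pass to adjoints of resolvents), and self-adjointness of $L_0$ gives $\dom(L_0)=\mathcal{H}_0\cap\dom(L)=\mathcal{H}_0\cap\dom(L^*)$. By (\ref{eq:inclusion}) one has $\dom(L)\subset\dom(\Gamma_1)$, and, as in the derivation of (\ref{eq:Gamma-0-Gamma-1-q}), also $\dom(L^*)\subset\dom(\Gamma_1)$, with $(\Gamma_1-\I\Gamma_0)u=0$ for $u\in\dom(L)$ and $(\Gamma_1+\I\Gamma_0)u=0$ for $u\in\dom(L^*)$. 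Hence every $u\in\dom(L_0)$ satisfies both, which forces $\Gamma_0 u=0$ and therefore $\Gamma_1 u=0$; by (\ref{eq:equality-domains-gamma-a}) this puts $u$ in $\dom(A_0)\cap\ker(\Gamma_1)=\dom(\widetilde{A})$, and since $L,A_0\subset A$ one gets $L_0u=Lu=Au=A_0u=\widetilde{A}u$. Thus $L_0\subset\widetilde{A}$, and in particular $L_0\subset A_0$.

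The second step promotes $L_0\subset A_0$ to a genuine reduction of the self-adjoint operator $A_0$: for $z\in\complex\setminus\reals$ and $h\in\mathcal{H}_0$, the vector $u:=(L_0-zI)^{-1}h$ lies in $\dom(L_0)\subset\dom(A_0)$ and satisfies $(A_0-zI)u=(L_0-zI)u=h$, so $(A_0-zI)^{-1}h=(L_0-zI)^{-1}h\in\mathcal{H}_0$. Thus $(A_0-zI)^{-1}\mathcal{H}_0\subset\mathcal{H}_0$ for every non-real $z$; passing to adjoints (using $((A_0-zI)^{-1})^*=(A_0-\cc{z}I)^{-1}$) the same holds for $\mathcal{H}_0^\perp$, so $P_0$ commutes with $(A_0-zI)^{-1}$ for all non-real $z$, whence $\mathcal{H}_0$ reduces $A_0$ and $A_0\vert_{\mathcal{H}_0}=L_0$; in particular $\mathcal{H}_0\cap\dom(A_0)=\dom(L_0)$. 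The third step transfers the reduction to $\widetilde{A}$: given $u\in\dom(\widetilde{A})\subset\dom(A_0)$, one has $P_0u\in\mathcal{H}_0\cap\dom(A_0)=\dom(L_0)\subset\dom(\widetilde{A})$, hence $(I-P_0)u\in\dom(\widetilde{A})$ as well, and since $\widetilde{A}$ acts as $A_0$, $P_0$ commutes with $\widetilde{A}$ on $\dom(\widetilde{A})$; thus $\mathcal{H}_0$ reduces $\widetilde{A}$, and the part $\widetilde{A}\vert_{\mathcal{H}_0}$, with domain $\mathcal{H}_0\cap\dom(\widetilde{A})=\dom(L_0)$, acts as $A_0\vert_{\mathcal{H}_0}=L_0$, which is self-adjoint. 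Simplicity of $\widetilde{A}$ now forces $\mathcal{H}_0=\{0\}$, completing the proof.

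The step I expect to require the most care is the passage from the bare inclusion $L_0\subset\widetilde{A}$ to an honest reducing decomposition: this inclusion alone does not make $\mathcal{H}_0$ reduce the non-selfadjoint---hence resolvent-poor---operator $\widetilde{A}$, and it is precisely to circumvent this that the argument detours through the self-adjoint $A_0$, whose resolvent is defined on all of $\complex\setminus\reals$ and from which the reduction can be read off. A secondary point needing verification is that $\dom(L^*)\subset\dom(\Gamma_1)$ and $(\Gamma_1+\I\Gamma_0)u=0$ on $\dom(L^*)$, which is implicit in the derivation of the second identity of (\ref{eq:Gamma-0-Gamma-1-q}).
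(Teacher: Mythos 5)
Your proof is correct, and it reaches the same two milestones as the paper's argument --- first that any $u$ in the reducing subspace intersected with $\dom(L)$ must satisfy $\Gamma_0u=\Gamma_1u=0$ and hence lie in $\dom(\widetilde{A})$, and second that the reducing subspace therefore reduces $\widetilde{A}$ to a self-adjoint part, contradicting simplicity --- but the execution differs at both stages. For the first stage the paper uses the Green identity \eqref{eq:green-formula} together with \eqref{eq:inclusion}: symmetry of $L\vert_{\mathcal{H}_0}$ gives $\inner{\Gamma_1w}{\Gamma_0w}-\inner{\Gamma_0w}{\Gamma_1w}=0$, and then $\Gamma_1w=\I\Gamma_0w$ turns this into $2\I\norm{\Gamma_0w}^2=0$; this needs only the single boundary condition defining $L$. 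You instead use that $\dom(L_0)\subset\dom(L)\cap\dom(L^*)$ and impose both conditions $(\Gamma_1\mp\I\Gamma_0)u=0$, which requires the identification of $\dom(L^*)$ with $\ker(\Gamma_1+\I\Gamma_0)$-type data --- a fact the paper does use elsewhere (in deriving the second line of \eqref{eq:Gamma-0-Gamma-1-q}) but does not establish inside this proposition, so the paper's route is marginally more self-contained while yours is equally valid. For the second stage your argument is actually \emph{more} careful than the paper's: the published proof passes from ``$\mathcal{H}_1\cap\dom(\widetilde{A})\ne\emptyset$'' to ``$\mathcal{H}_1$ is an invariant, hence reducing, subspace of $\widetilde{A}$'' in one terse sentence, whereas your detour through the everywhere-defined resolvent of the self-adjoint $A_0$ (showing $(A_0-zI)^{-1}$ and hence $P_0$ genuinely commute, then transferring the reduction to $\widetilde{A}$) supplies exactly the justification that sentence glosses over. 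In short: same strategy, a different and slightly heavier input for the boundary-data step, and a tighter treatment of the reduction step.
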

  \begin{proof}
    Suppose that $L$ has a reducing subspace $\mathcal{H}_{1}$ such that
    $L\vert_{\mathcal{H}_{1}}$ is self-adjoint. Take a nonzero
    $w\in\dom(L)\cap\mathcal{H}_{1}$. Then (\ref{eq:green-formula})
    and (\ref{eq:inclusion}) imply
    $  \inner{\Gamma_{1}w}{\Gamma_{0}w}_{\mathcal{E}}
-\inner{\Gamma_{0}w}{\Gamma_{1}w}_{\mathcal{E}}=0.
    $
Since $w\in\ker(\Gamma_{1}-\I\Gamma_{0})$, one obtains from the last
equality that $\norm{\Gamma_{0}w}=0$. Therefore,
$w\in\ker(\Gamma_{0})\cap\ker(\Gamma_{1})$, which means that
$w\in\dom(\widetilde{A})$.

The nontrivial invariant subspace $\mathcal{H}_{1}$ of $L$ is a
nontrivial invariant subspace of its restriction $\widetilde{A}$ as
long as $\mathcal{H}_{1}\cap\dom(\widetilde{A})\ne\emptyset$. This
last condition has been established above. Finally, since
$\widetilde{A}$ is symmetric, $\mathcal{H}_{1}$ is actually a reducing
subspace of $\widetilde{A}$. Clearly $\widetilde{A}$ is self-adjoint
in $\mathcal{H}_{1}$.
  \end{proof}


\section{Self-adjoint dilations for operators of BVP and a 3-component functional model}
\label{sec:dilation-boundary-problem}


Any completely non-selfadjoint dissipative operator $L$ admits a
self-adjoint dilation \cite{MR2760647}, which is unique up to a unitary transformation, under an assumption of minimality, see (\ref{eq:dilation-is-minimal}) below. There are numerous approaches
to an explicit construction of the named dilation
\cite{BMNW2018, MR573902, Naimark1940, Naimark1943, MR0510053, Drogobych, MR2330831, Ryzhov_closed, Strauss_survey}. In
applications, one is compelled to seek a realisation corresponding to
a particular setup. In the present paper we develop a way of constructing dilations
of dissipative operators convenient in the context of BVP for PDE.
 
 In the formulae below, we use the subscript ``$\pm$'' to
indicate two different versions of the same formula in which the
subscripts ``$+$'' and ``$-$'' are taken individually.

Recall that for any maximal dissipative operator $L,$ its {\it dilation} is defined as a self-adjoint operator $\mathscr{A}$ in a larger Hilbert space $\mathscr{H}\supset\mathcal{H}$
with the property
\begin{equation}
  \label{eq:extension-is-dilation}
 P_\mathcal{H}(\mathscr{A}-zI)^{-1}\big\vert_{\mathcal{H}}=
 (L-z I)^{-1}\, \qquad \forall z\in\complex_-.
\end{equation}
A dilation $\mathscr{A}$ is referred to as {\it minimal} if
\begin{equation}
  \label{eq:dilation-is-minimal}
  \overline{\Span_{z\in\complex\setminus\reals}\{(\mathscr{A}-z
  I)^{-1}\cH\}}
  =\mathscr{H}\,.
\end{equation}
We start by constructing a minimal dilation of the operator $L$ of the previous section, defined by (\ref{eq:operator-L}), following a procedure similar
to the one used in \cite{MR0365199,MR0510053}. Let
\begin{equation}
  \label{eq:wider-hilbert-pavlov}
  \mathscr{H}:=
  L^2(\reals_-,{\mathcal E})\oplus\mathcal{H}\oplus
  L^2(\reals_+,{\mathcal E})\,.
\end{equation}
In this Hilbert space, the operator
$\mathscr{A}$ is defined as follows. Its domain $\dom(\mathscr{A})$ is given by
\begin{equation}
  \dom(\mathscr{A}):=\left\{
   (v_-, u, v_+)^\top
\in\mathscr{H}:\ \ v_\pm\in W_2^1(\reals_\pm , \mathcal{E}),\ \ 
u\in\dom(\Gamma_1):\,
\begin{matrix}
\Gamma_1 u\pm\I\Gamma_0 u=\sqrt{2}v_\pm(0)
\end{matrix}
\right\},
\label{two_ast}
\end{equation}
where $W_2^1(\reals_+ , \mathcal{E})$ and
$W_2^1(\reals_- , \mathcal{E})$ are the Sobolev spaces
of
functions defined on $\reals_{+}$ and $\reals_{-}$, respectively, and
taking values in $\mathcal{E}$. We remark that the
results of the previous section imply that in our case
${\mathcal H}_{\ss}=\dom(\Gamma_1).$ On this domain, the operator
$\mathscr{A}$ acts according to the rule
\begin{equation}
\label{eq:dilation-action}
\mathscr{A}:
    \dom(\mathscr{A})\ni(v_-, u, v_+)^\top
\mapsto
(\I v'_-, Au, \I v'_+)^\top
    \in\dom(\mathscr{A}).
\end{equation}

\begin{theorem}
  \label{thm:dilation-selfadjoint}
  In the dilated space $\mathscr{H}$, the operator
  $\mathscr{A}$ is a self-adjoint extension of $L$.
\end{theorem}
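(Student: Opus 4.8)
The plan is to prove self-adjointness of $\mathscr{A}$ by first showing it is symmetric and then identifying $\dom(\mathscr{A}^*)$ with $\dom(\mathscr{A})$; the inclusion $L\subset\mathscr{A}$ (in the sense of the dilation property) will be checked separately. First I would verify symmetry: take two elements $(v_-,u,v_+)^\top$ and $(w_-,\widetilde u,w_+)^\top$ in $\dom(\mathscr{A})$ and compute $\inner{\mathscr{A}(v_-,u,v_+)}{(w_-,\widetilde u,w_+)}_{\mathscr{H}} - \inner{(v_-,u,v_+)}{\mathscr{A}(w_-,\widetilde u,w_+)}_{\mathscr{H}}$. The three summands split as follows: the $\mathcal{H}$-component contributes $\inner{Au}{\widetilde u}_{\mathcal H} - \inner{u}{A\widetilde u}_{\mathcal H}$, which by the Green identity \eqref{eq:green-formula} equals $\inner{\Gamma_1 u}{\Gamma_0\widetilde u}_{\mathcal E} - \inner{\Gamma_0 u}{\Gamma_1\widetilde u}_{\mathcal E}$; the half-line components each contribute an integration-by-parts term $\inner{\I v_\pm'}{w_\pm} - \inner{v_\pm}{\I w_\pm'} = \I\bigl(\inner{v_\pm'}{w_\pm} + \inner{v_\pm}{w_\pm'}\bigr)_{L^2(\reals_\pm,\mathcal E)}$, which integrates to a boundary term at $0$ (the boundary terms at $\mp\infty$ vanish because $W_2^1$ functions on a half-line decay). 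Collecting these boundary terms at $0$ and substituting the coupling condition $\Gamma_1 u\pm\I\Gamma_0 u = \sqrt 2\, v_\pm(0)$ (and the analogous one for $\widetilde u$), one should find that everything cancels; the algebra is a short but careful computation with the factor $\sqrt 2$ chosen precisely to make it work, and keeping track of which sign goes with $\reals_+$ versus $\reals_-$ is the one place to be attentive.

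Next I would show $\dom(\mathscr{A}^*)\subset\dom(\mathscr{A})$. Given $(g_-, h, g_+)^\top\in\dom(\mathscr{A}^*)$, pairing $\mathscr{A}$ against test elements supported away from the coupling — first test elements of the form $(v_-,0,0)$ and $(0,0,v_+)$ with $v_\pm\in C_c^\infty(\reals_\pm^\circ,\mathcal E)$ — forces $g_\pm\in W_2^1(\reals_\pm,\mathcal E)$ with $\mathscr{A}^*$ acting as $\I\,\partial$ on those components; then testing against elements $(0,u,0)$ with $u\in\dom(\widetilde A)$ (so $\Gamma_0 u=\Gamma_1 u=0$, no coupling constraint) shows that $h\in\dom(\widetilde A^*)$, hence $h\in\dom(A)\cap\dom(\Gamma_1)$ after invoking the description of $\widetilde A$ and $A$ from Section~\ref{sec:triples-bvp}. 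Finally, allowing the full coupled test elements and reading off the surviving boundary terms at $0$ yields the constraint $\Gamma_1 h\pm\I\Gamma_0 h=\sqrt 2\, g_\pm(0)$, i.e. $(g_-,h,g_+)^\top\in\dom(\mathscr{A})$. Since we already have $\mathscr{A}\subset\mathscr{A}^*$ from symmetry, this gives $\mathscr{A}=\mathscr{A}^*$.

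It remains to check that $\mathscr{A}$ extends $L$ as a dilation, i.e. \eqref{eq:extension-is-dilation}. For $z\in\complex_-$ and $h\in\mathcal H$, one solves $(\mathscr{A}-zI)(v_-,u,v_+)^\top = (0,h,0)^\top$. The half-line equations $\I v_\pm' = z v_\pm$ on $\reals_\pm$ with the $L^2$ constraint force $v_-\equiv 0$ (since the only $L^2(\reals_-)$ solution of $v_-' = -\I z v_-$ with $\im z<0$, hence $\re(-\I z)=\im z<0$... one checks the sign so that exponential growth is excluded) and $v_+(t) = e^{-\I z t} v_+(0)$; then the coupling condition at $0$ becomes $\Gamma_1 u - \I\Gamma_0 u = 0$ (from the $v_-$ side) and $\Gamma_1 u + \I\Gamma_0 u = \sqrt 2\, v_+(0)$. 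The first of these says exactly $u\in\dom(L)$, and the middle equation $(A-zI)u = h$ then gives $u = (L-zI)^{-1}h$; projecting onto $\mathcal H$ recovers \eqref{eq:extension-is-dilation}. The main obstacle is the second paragraph: pinning down $\dom(\mathscr{A}^*)$ requires the localisation argument with compactly supported test functions together with the abstract identification $\widetilde A^* $ via $A$ and $\Gamma_1$ from Ryzhov's framework, and one must be careful that $u\in\dom(\Gamma_1)$ (rather than merely $\dom(A)$) emerges correctly so that the boundary traces in the coupling condition make sense. I expect the authors to handle this either directly as sketched or by an abstract dilation-uniqueness/Cayley-transform argument, but the hands-on computation above is the natural route.
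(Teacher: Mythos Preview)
Your symmetry computation is essentially the paper's, and your check of the dilation property at the end is fine (though the theorem only asks for the literal inclusion $L\subset\mathscr{A}$, which follows immediately from the definitions since $u\in\dom(L)$ gives $\Gamma_1u-\I\Gamma_0u=0$ and hence $v_\pm\equiv0$ satisfy the coupling).

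The real difficulty is in your second paragraph, and the gap is in the line ``$h\in\dom(\widetilde A^*)$, hence $h\in\dom(A)\cap\dom(\Gamma_1)$''. In the Ryzhov framework this implication is \emph{false} in general: the whole point of introducing the triple $(A_0,\Pi,\Lambda)$ rather than a classical boundary triple is that $\dom(\widetilde A^*)$ is \emph{strictly larger} than $\dom(\Gamma_1)=\dom(A_0)\dotplus\Pi\dom(\Lambda)$, and the trace operators $\Gamma_0,\Gamma_1$ do not extend to all of $\dom(\widetilde A^*)$ (see the discussion around the Green identity in the Introduction). In the model example this is the familiar fact that there exist $u\in L^2(\Omega)$ with $\Delta u\in L^2(\Omega)$ for which $u|_{\partial\Omega}$ and $\partial_n u|_{\partial\Omega}$ do not lie in $L^2(\partial\Omega)$. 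So testing against $(0,u,0)$ with $u\in\dom(\widetilde A)$ only gives $h\in\dom(\widetilde A^*)$, and there is no mechanism in Section~\ref{sec:triples-bvp} to upgrade this to $h\in\dom(\Gamma_1)$. Without that, the coupling conditions $\Gamma_1 h\pm\I\Gamma_0 h=\sqrt2\,g_\pm(0)$ do not even make sense, and your identification of $\dom(\mathscr{A}^*)$ breaks down.

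This is precisely why the paper does \emph{not} try to compute $\dom(\mathscr{A}^*)$ directly. Instead, having established symmetry, it shows $\ran(\mathscr{A}-zI)=\mathscr{H}$ for every $z\in\complex\setminus\reals$ by writing down an explicit preimage of an arbitrary $(h_-,h,h_+)^\top$ (formulae \eqref{eq:vector-in-domain} and \eqref{eq:other-vector-in-domain}), checking by hand that the preimage lies in $\dom(\mathscr{A})$, and then applying the basic criterion that a symmetric operator with full range of $\mathscr{A}-zI$ for $z$ in both half-planes is self-adjoint. The explicit construction uses the solution operator $\gamma(z)$, the $M$-function, and the characteristic function $S$, and as a by-product yields the resolvent formulae needed in the rest of the paper. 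Your localisation approach would succeed in a classical boundary-triple setting, but here the range argument is not merely an alternative --- it is what makes the proof go through.
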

\begin{proof}
  The fact that $\mathscr{A}$ is an extension of $L$ follows from
  \eqref{eq:operator-L} and \eqref{eq:inclusion}. Let us establish the
  self-adjointness of $\mathscr{A}$. Abbreviating
  $\mathfrak{u}=(v_-,u,v_+)^\top\in\dom(\mathscr{A}),$ we have
  \begin{equation}
  \label{first_starred}
  \begin{aligned}
    \inner{\mathscr{A}\mathfrak{u}}{\mathfrak{u}}-
    \inner{\mathfrak{u}}{\mathscr{A}\mathfrak{u}}&=
    \inner{\I v'_-}{v_-}+\inner{Au}{u}+\inner{\I v'_+}{v_+} -
    \inner{v_-}{\I v'_-}-\inner{u}{Au}-\inner{v_+}{\I v'_+}\\[0.3em]
   &=\I\int_{\reals_-}(v'_-\cc{v}_-+v_-\cc{v}_-') +
   \I\int_{\reals_+}(v'_+\cc{v}_++v_+\cc{v}_+')
   +\inner{Au}{u}-\inner{u}{Au}\\[0.2em]
   &=\I\norm{v_-(0)}^{2}-\I\norm{v_+(0)}^{2}+\inner{\Gamma_1
     u}{\Gamma_0 u}-\inner{\Gamma_0 u}{\Gamma_1 u}.
  \end{aligned}
  \end{equation}
Furthermore, taking into account the conditions defining $\dom(\mathscr{A})$,
one obtains
\begin{equation}
\label{second_starred}
\begin{aligned}
  \inner{\Gamma_1
     u}{\Gamma_0 u}&-\inner{\Gamma_0 u}{\Gamma_1 u}=
\bigl\langle\sqrt{2}v_-(0)+\I\Gamma_0u, \Gamma_0u\bigr\rangle-\bigl\langle\Gamma_0u, \sqrt{2}v_+(0)-\I\Gamma_0u\bigr\rangle\\[0.4em]
&=\bigl\langle\sqrt{2}v_{-}(0),\Gamma_{0}u\bigr\rangle-\bigl\langle\Gamma_{0}u, \sqrt{2}v_{+}(0)\bigr\rangle
                                                  =\I\bigl\langle v_-(0), v_+(0)-v_-(0)\bigr\rangle+\I\bigl\langle v_+(0)-v_-(0), v_+(0)\bigr\rangle
                                                  \\[0.4em]
&
=-\I\norm{v_-(0)}^{2}+\I\norm{v_+(0)}^{2}\,.
\end{aligned}
\end{equation}
It follows by combining (\ref{first_starred})  and (\ref{second_starred}) that $\mathscr{A}$ is symmetric. To complete the proof, it suffices to
show that $\ran(\mathscr{A}-z I)=\mathscr{H}$ for all $z\in\complex\setminus\reals$. To this end, consider the operators $\partial_\pm$ and $\partial_\pm^0$ in $L_2(\reals_\pm,\mathcal{E})$ given by
\begin{equation*}
  \dom(\partial_\pm):=W_2^1(\reals_\pm,\mathcal{E}),\qquad \partial_\pm:y_\pm\mapsto iy'_\pm,
\qquad \qquad \dom(\partial_\pm^0):=\stackrel{\hspace{-1mm}\circ}{W_2^1}(\reals_\pm,\mathcal{E}), \qquad \partial_\pm^0:y_\pm\mapsto iy'_\pm.
\end{equation*}
Here,
$\overset{\hspace{-1mm}\circ}{W_{2}^{1}}(\reals_\pm,\mathcal{E})$ is
the closure in $W_2^1(\reals_\pm,\mathcal{E})$ of the set of smooth
functions with compact suppport in $\reals_{\pm}.$
The operators
$\partial_{+}^{0}$ and $\partial_{-}^{0}$ are symmetric, with
deficiency indices $(n_+,n_-)=(1,0)$ and $(n_+,n_-)=(0,1)$, respectively. Also,
$\partial_{\pm}^{0}=\partial_{\pm}^{*}$ (see \cite[Chapter 4,
Section 8.4]{MR1192782}). Therefore $\rho(\partial_\pm)=\complex_\pm$ and
$\rho(\partial_\pm^0)=\complex_\mp$.

Take any $z\in\complex_-$ and $(h_-,h,h_+)^\top\in\mathscr{H}$. It turns
out that the vector
$(f_-,f,f_+)^\top$ defined
by
\begin{equation}
\label{eq:vector-in-domain}
\begin{split}
    f_-&:=(\partial_--zI)^{-1}h_-,\\[0.25em]
    f&:=(L -z I)^{-1}h+\sqrt{2}\gamma(z)(M(z)-\I I)^{-1}f_-(0),\\[0.3em]
    f_+&:=(\partial_+^0-zI)^{-1}h_+
    +{\rm e}^{-\I z\cdot}\bigl[\I\sqrt{2}\Gamma_0(L-zI)^{-1}h
      + S^*(\cc{z})f_-(0)\bigr],
\end{split}
\end{equation}
is an element of $\dom(\mathscr{A})$. Indeed, clearly $f_{-}\in W_{2}^{1}(\reals_{-}, \mathcal{E}),$ and $f_+ \in W_{2}^{1}(\reals_{+}, \mathcal{E})$ since 
\begin{equation}
f_+=(\partial_+^0-zI)^{-1}h_+
    +{\rm e}^{-\I z\cdot}\mathbf{e}\quad {\rm for\ some}\ \mathbf{e}\in\mathcal{E}.
    \label{new_star}
\end{equation}
 Also,
\begin{align*}
  (\Gamma_1-\I\Gamma_0)\bigl\{(L -z I)^{-1}h
    +\sqrt{2}\gamma(z)(M(z)-\I I)^{-1}f_-(0)\bigr\}
&=(\Gamma_1-\I\Gamma_0)\sqrt{2}\gamma(z)(M(z)-\I I)^{-1}f_-(0)\\[0.3em]
&=\sqrt{2}(M(z)-\I I)(M(z)-\I I)^{-1}f_-(0)
=\sqrt{2}f_-(0),
\end{align*}
where to obtain the first equality we use \eqref{eq:operator-L}, and the second equality follows from (\ref{eq:gamma-0-solution-operator}) and Definition~\ref{def:m-function}.
Thus
\begin{equation}
  \label{eq:first-domain-condition}
  (\Gamma_1-\I\Gamma_0)f=\sqrt{2}f_-(0)\,.
\end{equation}
In addition, we have
\begin{align*}
  (\Gamma_1+\I\Gamma_0)f&=(\Gamma_1-\I\Gamma_0)f +2\I\Gamma_0f
=\sqrt{2}f_-(0) + 2\I\Gamma_0\bigl\{(L -z I)^{-1}h
    +\sqrt{2}\gamma(z)(M(z)-\I I)^{-1}f_-(0)\bigr\}\\[0.4em]
&=\sqrt{2}f_-(0)+2\I \Gamma_0(L -z I)^{-1}h+\I 2\sqrt{2}(M(z)-\I I)^{-1}f_-(0)\\[0.4em]
&=2\I\Gamma_0(L -z I)^{-1}h+\sqrt{2}\left[I+2\I (M(z)-\I I)^{-1}\right]f_-(0)
=2\I\Gamma_0(L -z I)^{-1}h+\sqrt{2}S^*(\cc{z})f_-(0),
\end{align*}
where we have used (\ref{eq:first-domain-condition}), (\ref{eq:vector-in-domain}) for the
second, (\ref{eq:gamma-0-solution-operator}) for the third,
and (\ref{eq:adjoint-characteristic-of-L}) for the fifth
equality. Due to the expression for $f_+$ in (\ref{eq:vector-in-domain}), we have thus shown that
\begin{equation}
  \label{eq:second-domain-condition}
  (\Gamma_1+\I\Gamma_0)f=\sqrt{2}f_+(0)\,.
\end{equation}
The equalities (\ref{eq:first-domain-condition}) and 
(\ref{eq:second-domain-condition}) imply that $(f_-,f,f_+)^\top\in\dom(\mathscr{A}),$ see (\ref{two_ast}).

Next, we show that
\begin{equation}
  \label{eq:dilation-z-on-f}
  (\mathscr{A}-zI)
  \begin{pmatrix}
    f_-\\ f\\ f_+
  \end{pmatrix}=
  \begin{pmatrix}
    h_-\\ h\\ h_+
  \end{pmatrix}\,.
\end{equation}
 On the one hand, it follows from (\ref{new_star}) and the first line of (\ref{eq:vector-in-domain}) that
\begin{equation}
 \label{eq:h-pm-f-pm}
  h_\pm=(\partial_\pm-zI)f_\pm.
\end{equation}
On the
other hand, due to the fact that $L\subset A$ and the property (\ref{eq:range-solution-kernel-A}), one has
\begin{equation}
  \label{eq:A-z-acting-on-f}
  (A-zI)\bigl[(L -z I)^{-1}h
    +\sqrt{2}\gamma(z)(M(z)-\I I)^{-1}f_-(0)\bigr]=h\,.
\end{equation}
In conformity with (\ref{eq:dilation-action}), the identities
(\ref{eq:h-pm-f-pm}), (\ref{eq:A-z-acting-on-f}) yield
(\ref{eq:dilation-z-on-f}). As $(h_-,h,h_+)^\top$ is an arbitrary element in
$\mathscr{H}$. we have also shown that
$\ran(\mathscr{A}-zI)=\mathscr{H}$ for $z\in\complex_-$.

Now fix an arbitrary $z\in\complex_+$. For any
$(h_-,h,h_+)^\top\in\mathscr{H}$, we redefine
\begin{equation}
  \label{eq:other-vector-in-domain}
  \begin{split}
    f_+&:=(\partial_+-zI)^{-1}h_+,\\[0.3em]
    f&:=(L^{*} -z I)^{-1}h
    +\sqrt{2}\gamma(z)(M(z)+\I I)^{-1}f_+(0),\\[0.4em]
      f_-&:=(\partial_-^0-zI)^{-1}h_-+
    {\rm e}^{-{\rm i}z\cdot}\bigl[-\I\sqrt{2}\Gamma_0(L^{*}-zI)^{-1}h+
      S(z)f_+(0)\bigr]\,.
\end{split}
\end{equation}
In the
same way as above, it can be shown that
$(f_-,f,f_+)^\top\in\mathscr{A}$ and
\begin{equation*}
  (\mathscr{A}-zI)
  \begin{pmatrix}
    f_-\\ f\\ f_+
  \end{pmatrix}=
  \begin{pmatrix}
    h_-\\ h\\ h_+
  \end{pmatrix},
\end{equation*}
which completes the proof.
\end{proof}
\begin{remark}
  \label{rem:resolvent-formulae}
  In the proof of Theorem~\ref{thm:dilation-selfadjoint}, we have
  obtained the following formulae for the resolvent of
  $\mathscr{A}:$ for $(h_-,h,h_+)^\top\in\mathscr{H}$, 
  \begin{equation*}
    (\mathscr{A}-zI)^{-1}
     \begin{pmatrix}
    h_-\\ h\\ h_+
  \end{pmatrix}=\begin{pmatrix}
    f_-\\ f\\ f_+
  \end{pmatrix},
  \end{equation*}
 where $(f_-,f,f_+)^\top$ is given by \eqref{eq:vector-in-domain}
  for $z\in\complex_-$ and by
  \eqref{eq:other-vector-in-domain} for $z\in\complex_+$.
\end{remark}

The following technical result will be used to prove that $\mathscr{A}$ is a minimal dilation of $L$; at the same time, it is of a clear independent interest.
\begin{lemma}
  \label{lem:expected-density}
 Each of the sets
 \[ \Span_{u\in\mathcal{H}}\left\{\Gamma_{1}(A_{0}-zI)^{-1}u\right\},\qquad
    \Span_{    h\in\mathcal{H}}\left\{\Gamma_{0}(L-zI)^{-1}h\right\},\qquad
    \Span_{    h\in\mathcal{H}}\left\{\Gamma_{0}(L^*-zI)^{-1}h\right\}
     \]
is dense in $\mathcal{E},$ for every $z\in \mathbb{C}_-\cup \mathbb{C}_+,$ $z\in \mathbb{C}_-,$ $z\in \mathbb{C}_+,$ respectively.
\end{lemma}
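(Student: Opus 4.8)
The plan is to reduce all three assertions to a single observation carrying any real content — that the solution operator $\gamma(\zeta)$ is injective for every $\zeta\in\rho(A_{0})$ — and then invoke only soft Hilbert-space arguments. First, since the maps $h\mapsto\Gamma_{1}(A_{0}-zI)^{-1}h$, $h\mapsto\Gamma_{0}(L-zI)^{-1}h$ and $h\mapsto\Gamma_{0}(L^{*}-zI)^{-1}h$ are linear from $\mathcal{H}$ to $\mathcal{E}$, the three linear spans in the statement are nothing but the ranges of these operators. By \eqref{eq:gamma-star-Gamma-1} and \eqref{eq:Gamma-0-L-through-M-gamma} these ranges equal, respectively,
\[
\ran\bigl(\gamma(\cc z)^{*}\bigr),\qquad
\ran\bigl((M(z)-\I I)^{-1}\gamma(\cc z)^{*}\bigr)\quad(z\in\complex_{-}),\qquad
\ran\bigl((M(z)+\I I)^{-1}\gamma(\cc z)^{*}\bigr)\quad(z\in\complex_{+}).
\]
Here $\gamma(\cc z)=(I+\cc z(A_{0}-\cc z I)^{-1})\Pi$ is bounded and everywhere defined on $\mathcal{E}$ (hence $\gamma(\cc z)^{*}$ is bounded and everywhere defined on $\mathcal{H}$), and $(M(z)\mp\I I)^{-1}$ is the bounded everywhere-defined inverse that appears in the Ryzhov resolvent formula for $L$ (resp.\ $L^{*}$) and already enters \eqref{eq:Gamma-0-L-through-M-gamma}; in particular $M(z)\mp\I I$ maps $\dom(\Lambda)$ bijectively onto $\mathcal{E}$, so $\ran\bigl((M(z)\mp\I I)^{-1}\bigr)=\dom(\Lambda)$, which is dense in $\mathcal{E}$ since $\Lambda$ is self-adjoint.

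Next I would establish the injectivity of $\gamma$: if $\gamma(\zeta)\phi=0$ for some $\zeta\in\rho(A_{0})$, then applying $\Gamma_{0}$ and using \eqref{eq:gamma-0-solution-operator} gives $\phi=\Gamma_{0}\gamma(\zeta)\phi=0$. Consequently $\ker\gamma(\cc z)=\{0\}$, and therefore
\[
\overline{\ran\bigl(\gamma(\cc z)^{*}\bigr)}=\bigl(\ker\gamma(\cc z)\bigr)^{\perp}=\mathcal{E},
\]
which is precisely the density of the first of the three sets, for every $z\in\complex_{-}\cup\complex_{+}$ (so that $\cc z\in\rho(A_{0})$).

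For the other two sets I would use the elementary fact that a bounded everywhere-defined operator $T$ on $\mathcal{E}$ with dense range carries every dense subset $D\subset\mathcal{E}$ onto a dense subset: given $y\in\mathcal{E}$, approximate $y$ by some $Tx$ and then $x$ by elements of $D$, using continuity of $T$. Applying this with $D=\ran(\gamma(\cc z)^{*})$ — dense by the previous step — and with $T=(M(z)-\I I)^{-1}$ for $z\in\complex_{-}$, respectively $T=(M(z)+\I I)^{-1}$ for $z\in\complex_{+}$, each of which is bounded, everywhere defined, and of dense range $\dom(\Lambda)$, gives the density of the second set for $z\in\complex_{-}$ and of the third set for $z\in\complex_{+}$.

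The only step with genuine content is the injectivity of $\gamma(\zeta)$, which is immediate from the identity $\Gamma_{0}\gamma(\zeta)=I_{\mathcal{E}}$ in \eqref{eq:gamma-0-solution-operator}; everything else is bookkeeping. The points where a little care is warranted are the facts that $\Gamma_{1}(A_{0}-zI)^{-1}$, $\Gamma_{0}(L-zI)^{-1}$ and $\Gamma_{0}(L^{*}-zI)^{-1}$ are genuinely bounded operators on $\mathcal{H}$ — so that ``the span of their values'' is literally their range — and that $(M(z)\mp\I I)^{-1}$ is bounded with range exactly $\dom(\Lambda)$. Both are already built into the framework of \cite{Ryzh_spec} under which $L$, $L^{*}$ and the $M$-function are introduced; alternatively, the latter can be read off directly from the Herglotz property of $M$, since for $z\in\complex_{-}$ one has $\Im\inner{M(z)\phi}{\phi}\le0$ for $\phi\in\dom(\Lambda)$, whence $\norm{(M(z)-\I I)\phi}\ge\norm{\phi}$, and the same estimate applied to $(M(z)-\I I)^{*}=M(\cc z)+\I I$ (with $\cc z\in\complex_{+}$) gives surjectivity.
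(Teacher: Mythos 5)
Your proposal is correct and follows essentially the same route as the paper: reduce the second and third sets to the first via the factorisation $\Gamma_0(L-zI)^{-1}=-(M(z)-\I I)^{-1}\Gamma_1(A_0-zI)^{-1}$ (and its $L^*$ analogue), using that $(M(z)\mp\I I)^{-1}$ is bounded with dense range $\dom(\Lambda)$, and then obtain density of the first set from the injectivity of $\gamma(\cc z)$ via $\overline{\ran(\gamma(\cc z)^*)}=(\ker\gamma(\cc z))^\perp$. The only cosmetic difference is that you deduce $\ker\gamma(\cc z)=\{0\}$ directly from $\Gamma_0\gamma(\cc z)=I_{\mathcal E}$, whereas the paper unwinds $\gamma(\cc z)\widetilde v=0$ into $\Pi\widetilde v\in\dom(A_0)$ and invokes \eqref{eq:properties-a-pi} — the same fact in a different wrapper.
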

\begin{proof}
  Due to \eqref{eq:Gamma-0-Gamma-1-q} and the fact that $\dom(M(z))$ is
  dense in $\mathcal{E}$, it suffices to prove the assertion of the lemma about the first set.

  Suppose that $\widetilde{v}\in\mathcal{E}$ is such that
   $ \inner{\Gamma_{1}(A_{0}-z I)^{-1}u}{\widetilde{v}}=0$ for all $u\in{\mathcal H}.$
  Using \eqref{eq:gamma-star-Gamma-1}, 
  we obtain
  $
  \inner{u}{\gamma(\bar z)\widetilde{v}}=0\quad \forall u\in \mathcal H,
  $
  and therefore
  $\gamma(\bar z)\widetilde{v}=0$ or, in view of (\ref{eq:solution-operator}), $v=-\cc{z}(A_0-\cc{z})^{-1}v,$
  where $v:=\Pi\widetilde{v}$. Hence, $v\in \dom A_0$ and $\widetilde{v}=\Gamma_0 v=0,$ as required.
\end{proof}
\begin{theorem}
  \label{thm:minimal-dilation}
  The operator $\mathscr{A}$ is a minimal
  self-adjoint dilation of $L$.
\end{theorem}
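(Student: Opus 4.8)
The plan is to dispose of three items in turn: that $\mathscr{A}$ is self-adjoint and extends $L$ (already done in Theorem~\ref{thm:dilation-selfadjoint}), that it satisfies the dilation identity \eqref{eq:extension-is-dilation}, and that it is minimal in the sense of \eqref{eq:dilation-is-minimal}.

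For the dilation property I would simply read it off the resolvent formulae of Remark~\ref{rem:resolvent-formulae}. Fix $z\in\complex_-$ and take $(h_-,h,h_+)^\top=(0,h,0)^\top$ with $h\in\mathcal{H}$ in \eqref{eq:vector-in-domain}: then $f_-=(\partial_--zI)^{-1}0=0$, so $f_-(0)=0$, and hence the middle component is $f=(L-zI)^{-1}h$. Projecting onto $\mathcal{H}$ gives $P_{\mathcal{H}}(\mathscr{A}-zI)^{-1}\big\vert_{\mathcal{H}}=(L-zI)^{-1}$ for all $z\in\complex_-$, which is exactly \eqref{eq:extension-is-dilation}; the analogous identity on $\complex_+$ with $L$ replaced by $L^{*}$ follows from \eqref{eq:other-vector-in-domain} (or by taking adjoints) but will not be needed.

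For minimality, set $\mathcal{K}:=\overline{\Span_{z\in\complex\setminus\reals}\{(\mathscr{A}-zI)^{-1}\mathcal{H}\}}$; the goal is $\mathcal{K}=\mathscr{H}$. First I would note $\mathcal{H}\subset\mathcal{K}$, which holds because $-z(\mathscr{A}-zI)^{-1}x\to x$ as $z\to\infty$ along the imaginary axis for every $x$ (spectral theorem for the self-adjoint $\mathscr{A}$), so the closed subspace $\mathcal{K}$, which contains each $(\mathscr{A}-zI)^{-1}x$ with $x\in\mathcal{H}$, contains the limit $x$ as well. Next I would extract the ``channel content'' of $\mathcal{K}$. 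For $z\in\complex_-$ and $h\in\mathcal{H}$, \eqref{eq:vector-in-domain} with $(h_-,h,h_+)=(0,h,0)$ yields
\[
(\mathscr{A}-zI)^{-1}(0,h,0)^\top=\bigl(0,\ (L-zI)^{-1}h,\ \I\sqrt{2}\,{\rm e}^{-\I z\cdot}\Gamma_0(L-zI)^{-1}h\bigr)^\top\in\mathcal{K};
\]
since $(0,(L-zI)^{-1}h,0)^\top$ lies in the embedded copy of $\mathcal{H}$, hence in $\mathcal{K}$, subtracting gives $\bigl(0,0,{\rm e}^{-\I z\cdot}\Gamma_0(L-zI)^{-1}h\bigr)^\top\in\mathcal{K}$ for all $z\in\complex_-$, $h\in\mathcal{H}$. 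By Lemma~\ref{lem:expected-density} the range of $\Gamma_0(L-zI)^{-1}$ is dense in $\mathcal{E}$ for each such $z$, so $\mathcal{K}\supset\{{\rm e}^{-\I z\cdot}e:e\in\mathcal{E}\}$ for every $z\in\complex_-$. Since the functions $\{{\rm e}^{-\I z\cdot}:z\in\complex_-\}$ are complete in $L^2(\reals_+)$ (a scalar $g\perp$ all of them has vanishing Fourier transform, as $\int_0^\infty g(t){\rm e}^{\I at}{\rm e}^{bt}\,dt=0$ for every $a\in\reals$ and a fixed $b<0$ forces $g=0$ a.e.), a routine lifting to $\mathcal{E}$-valued functions gives $\{0\}\oplus\{0\}\oplus L^2(\reals_+,\mathcal{E})\subset\mathcal{K}$. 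Repeating the argument for $z\in\complex_+$ from \eqref{eq:other-vector-in-domain}, where after the same subtraction the surviving component is the first one, $-\I\sqrt{2}\,{\rm e}^{-\I z\cdot}\Gamma_0(L^{*}-zI)^{-1}h$, and the third set in Lemma~\ref{lem:expected-density} supplies the density in $\mathcal{E}$, gives $L^2(\reals_-,\mathcal{E})\oplus\{0\}\oplus\{0\}\subset\mathcal{K}$. Together with $\mathcal{H}\subset\mathcal{K}$ this forces $\mathcal{K}=\mathscr{H}$, which is \eqref{eq:dilation-is-minimal}.

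The genuinely substantive ingredient is Lemma~\ref{lem:expected-density}; the rest is bookkeeping with the resolvent formulae, the elementary limit $-z(\mathscr{A}-zI)^{-1}\to I$, and the classical completeness of one-sided exponentials in $L^2(\reals_\pm)$ together with its (routine) vector-valued version. The only obstacle is conceptual and is precisely what that lemma removes: were the range of $\Gamma_0(L-zI)^{-1}$ contained in a proper closed subspace $\mathcal{E}_0\subsetneq\mathcal{E}$, a copy of $L^2(\reals_+,\mathcal{E}\ominus\mathcal{E}_0)$ would decouple from $\mathcal{H}$ inside $\mathscr{H}$ and $\mathscr{A}$ would fail to be minimal — which is why the simplicity-free density assertion of Lemma~\ref{lem:expected-density} is exactly what is needed here.
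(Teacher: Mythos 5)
Your proposal is correct and takes essentially the same route as the paper: the dilation identity is read off the resolvent formulae of Remark~\ref{rem:resolvent-formulae}, and minimality is reduced, via the density assertions of Lemma~\ref{lem:expected-density} together with the completeness of the exponentials ${\rm e}^{-\I z\cdot}$ in $L^2(\reals_\pm,\mathcal{E})$ (proved by the same fix-$\im z$, Fourier-transform-in-$\re z$ argument), to filling the two channel components. The only, harmless, deviation is that you obtain $\mathcal{H}\subset\mathcal{K}$ from the strong limit $-z(\mathscr{A}-zI)^{-1}\to I$, whereas the paper uses that $(L-zI)^{-1}\mathcal{H}=\dom(L)$ is dense in $\mathcal{H}$; your explicit subtraction of the middle component before invoking Lemma~\ref{lem:expected-density} is in fact a slightly cleaner bookkeeping of the decomposition the paper writes down directly.
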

\begin{proof}
  By Theorem~\ref{thm:dilation-selfadjoint}, the operator
  $\mathscr{A}$ is a self-adjoint extension of $L$. The property
  \eqref{eq:extension-is-dilation} is verified directly on the basis of Remark \ref{rem:resolvent-formulae}. Thus it only remains to check the minimality condition \eqref{eq:dilation-is-minimal}.
  It follows from Remark~\ref{rem:resolvent-formulae} that, relative
  to the orthogonal decomposition
  \eqref{eq:wider-hilbert-pavlov}, one has
\begin{align*}
  \Span_{z\in\complex\setminus\reals}\{(\mathscr{A}-z I)^{-1}\cH\}
  &=\Span_{z\in\complex_{+}}\{{\rm e}^{-\I z\cdot}\Gamma_0(L^{*}-zI)^{-1}\cH\}\\[0.1em]
  &\oplus\biggl(\Span_{z\in\complex_{-}}\{(L-z I)^{-1}\cH\}+
    \Span_{z\in\complex_{+}}\{(L^{*}-z I)^{-1}\cH\}\biggr)
  \oplus\Span_{z\in\complex_{-}}\{{\rm e}^{-\I z\cdot}\Gamma_0(L-zI)^{-1}\cH\}\,.
\end{align*}
Since $L$ is densely defined, one clearly has
\begin{equation*}
  \overline{\Span_{z\in\complex_{-}}\{(L-z I)^{-1}\cH\}+
    \Span_{z\in\complex_{+}}\{(L^{*}-z I)^{-1}\cH\}}=\cH.
\end{equation*}
We next show that
\begin{equation}
  \label{eq:prospect-for-dense-set}
  \overline{\Span_{z\in\complex_{-}}
    \{{\rm e}^{-\I z\cdot}\Gamma_0(L-zI)^{-1}\cH\}}=
  L_{2}(\reals_{+},\mathcal{E})\,.
\end{equation}
Assuming that
$g\in L_{2}(\reals_{+},\mathcal{E})$ is such that for all $z\in{\mathbb C}_-,$ $h\in{\mathcal H},$ one has
\begin{align*}
  0=\inner{{\rm e}^{-\I z\cdot}\Gamma_0(L-zI)^{-1}h}{g}_{L_{2}(\reals_{+},\mathcal{E})}
   &=\int_{\reals_{+}}\inner{\Gamma_0(L-zI)^{-1}h}{{\rm e}^{\I\overline{z}\xi}g(\xi)}_{\mathcal{E}}d\xi\\[0.4em]
    &=\inner{\Gamma_0(L-zI)^{-1}h}{\int_{\reals_{+}}{\rm e}^{\I \xi\re z}{\rm e}^{\xi\im z}g(\xi)d\xi}_{\mathcal{E}}.
\end{align*}
By Lemma \ref{lem:expected-density}, it follows that
\begin{equation*}
\int_{\reals_{+}}{\rm e}^{\I \xi\re z}{\rm e}^{\xi\im z}g(\xi)d\xi=0\qquad\forall z\in{\mathbb C}_-.
\end{equation*}
Finally, fixing $\im z$ and taking the Fourier transform with respect to $\re z$ yields $g(\xi)=0$ for a.e. $\xi\in\reals_{+}$, which concludes the proof of \eqref{eq:prospect-for-dense-set}. By a similar argument, one also shows that
\begin{equation*}
  \overline{\Span_{z\in\complex_{+}}
    \{{\rm e}^{-\I z\cdot}\Gamma_0(L^{*}-zI)^{-1}\cH\}}=
  L_{2}(\reals_{-},\mathcal{E}),
\end{equation*}
which completes the proof.
\end{proof}
For convenience, we introduce the following families of sets in $\mathscr{H}$. For any $z_+\in\complex_+$ and $z_-\in\complex_-$, define
\begin{align*}
  \mathcal{Y}(z_{+},z_{-})&:=(\mathscr{A}-z_+ I)^{-1}
      \begin{pmatrix}
      0\\0\\
      L_2(\reals_+,\mathcal{E})
    \end{pmatrix}
    +(\mathscr{A}-z_- I)^{-1}
    \begin{pmatrix}
L_2(\reals_-,\mathcal{E})\\0\\0
\end{pmatrix},\\[0.7em]
  \mathcal{G}(z_+,z_-)&:=P_{\mathcal{H}}\mathcal{Y}(z_{+},z_{-}),
\end{align*}
where $P_{\mathcal{H}}$ is the orthogonal projection onto $\{0\}\oplus\mathcal{H}\oplus\{0\}.$ Henceforth, we identify $\{0\}\oplus\mathcal{H}\oplus\{0\}$ and $\mathcal{H}.$
\begin{lemma}
  \label{lem:set-resolvent-dense-whole-space}
  If $\widetilde{A}$ defined in \eqref{eq:B-definition} is simple, then the linear sets
  \begin{equation}
  \label{eq:set-dense-in-pavlov}
\Span_{z_\pm\in{\mathbb C}_\pm}
    \mathcal{Y}(z_{+},z_{-}),\qquad
    \Span_{z_\pm\in{\mathbb C}_\pm}\mathcal{G}(z_+,z_-)
\end{equation}
are dense in the
spaces $\mathscr{H}$ and $\mathcal{H}$, respectively.
\end{lemma}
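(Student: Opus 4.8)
The plan is to strip the statement down so that both density claims reduce to a single density claim in $\mathcal{H}$, and to settle the latter using the simplicity of $\widetilde{A}$. First I would decouple the spans. Since the zero vector lies in each of the summands $L_2(\reals_\pm,\mathcal{E})$, for every pair $z_\pm\in\complex_\pm$ both $(\mathscr{A}-z_+I)^{-1}(0,0,h_+)^\top$ and $(\mathscr{A}-z_-I)^{-1}(h_-,0,0)^\top$ lie in $\mathcal{Y}(z_+,z_-)$, whence
\[
\Span_{z_\pm\in\complex_\pm}\mathcal{Y}(z_+,z_-)=\mathcal{Y}_++\mathcal{Y}_-,\qquad \Span_{z_\pm\in\complex_\pm}\mathcal{G}(z_+,z_-)=P_{\mathcal H}\mathcal{Y}_++P_{\mathcal H}\mathcal{Y}_-,
\]
with $\mathcal{Y}_+:=\Span_{z_+\in\complex_+}(\mathscr{A}-z_+I)^{-1}(0,0,L_2(\reals_+,\mathcal{E}))^\top$ and $\mathcal{Y}_-:=\Span_{z_-\in\complex_-}(\mathscr{A}-z_-I)^{-1}(L_2(\reals_-,\mathcal{E}),0,0)^\top$. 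Using Remark~\ref{rem:resolvent-formulae} with \eqref{eq:other-vector-in-domain} and \eqref{eq:vector-in-domain} to evaluate the resolvent on such data, a generic element of $\mathcal{Y}_+$ at level $z_+$ is
\[
\bigl({\rm e}^{-\I z_+\cdot}S(z_+)f_+(0),\ \sqrt2\,\gamma(z_+)(M(z_+)+\I I)^{-1}f_+(0),\ (\partial_+-z_+I)^{-1}h_+\bigr)^\top,\qquad f_+(0):=\bigl[(\partial_+-z_+I)^{-1}h_+\bigr](0),
\]
and symmetrically for $\mathcal{Y}_-$, with $M(z_-)-\I I$ and $S^{*}(\cc{z_-})$ replacing $M(z_+)+\I I$ and $S(z_+)$.

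Next I would reduce everything to $\mathcal{H}$. Restricting $h_+$ to $\{h_+:f_+(0)=0\}$ puts $\{0\}\oplus\{0\}\oplus\{g\in W^1_2(\reals_+,\mathcal{E}):g(0)=0\}$ inside $\mathcal{Y}_+$, and this set is dense in $L_2(\reals_+,\mathcal{E})$; likewise $\mathcal{Y}_-$ fills the first component densely. Hence any $w=(w_-,w_0,w_+)^\top$ orthogonal to $\mathcal{Y}_++\mathcal{Y}_-$ must have $w_\pm=0$, and then the remaining relations say exactly $w_0\perp P_{\mathcal H}\mathcal{Y}_++P_{\mathcal H}\mathcal{Y}_-$; so both assertions of the lemma follow once $\overline{P_{\mathcal H}\mathcal{Y}_++P_{\mathcal H}\mathcal{Y}_-}=\mathcal{H}$ is established. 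The $\mathcal{H}$-component of the resolvent acting on the $L_2(\reals_+)$-datum equals $\sqrt2\,\gamma(z_+)(M(z_+)+\I I)^{-1}f_+(0)$ (and $\sqrt2\,\gamma(z_-)(M(z_-)-\I I)^{-1}f_-(0)$ on the $L_2(\reals_-)$-datum), with $f_\pm(0)$ ranging over all of $\mathcal{E}$; as $(M(z)+\I I)^{-1}$ ($z\in\complex_+$) and $(M(z)-\I I)^{-1}$ ($z\in\complex_-$) are bounded on $\mathcal{E}$ with range the dense set $\dom(\Lambda)$ (see \eqref{eq:def-characteristic-of-L}, \eqref{eq:adjoint-characteristic-of-L}) and $\gamma(z)$ is bounded, this yields $\overline{P_{\mathcal H}\mathcal{Y}_\pm}=\overline{\Span_{z\in\complex_\pm}\ran(\gamma(z))}$. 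Thus it only remains to show $\overline{\Span_{z\in\complex\setminus\reals}\ran(\gamma(z))}=\mathcal{H}$.

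This is where simplicity of $\widetilde{A}$ enters. Suppose $v\perp\ran(\gamma(z))$ for all $z\in\complex\setminus\reals$, i.e. $\gamma(z)^{*}v=0$. By \eqref{eq:solution-operator}, \eqref{eq:other-form-inverse} and self-adjointness of $A_0^{-1}$ one has $\gamma(z)^{*}=\Pi^{*}(I-\cc{z}A_0^{-1})^{-1}$, so expanding $(I-\cc{z}A_0^{-1})^{-1}$ in a Neumann series about $z=0$ gives $\Pi^{*}A_0^{-k}v=0$ for all $k\ge0$; that is, $v\in\mathcal{C}^{\perp}$ where $\mathcal{C}:=\overline{\Span\{A_0^{-k}\ran(\Pi):k\ge0\}}$. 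Being invariant under the bounded self-adjoint operator $A_0^{-1}$, the subspace $\mathcal{C}$ reduces $A_0^{-1}$, hence $A_0$; and obviously $\overline{\ran(\Pi)}\subseteq\mathcal{C}$, so $\mathcal{C}^{\perp}\subseteq\ran(\Pi)^{\perp}$. Recalling $\dom(\widetilde{A})=A_0^{-1}[\ran(\Pi)^{\perp}]$, for $u=A_0^{-1}g\in\dom(\widetilde{A})$ (with $g\in\ran(\Pi)^{\perp}$) one has $P_{\mathcal{C}^{\perp}}u=A_0^{-1}P_{\mathcal{C}^{\perp}}g$ and $P_{\mathcal{C}^{\perp}}g\in\mathcal{C}^{\perp}\subseteq\ran(\Pi)^{\perp}$, so $P_{\mathcal{C}^{\perp}}u\in\dom(\widetilde{A})$ and $\widetilde{A}P_{\mathcal{C}^{\perp}}u=P_{\mathcal{C}^{\perp}}\widetilde{A}u$; thus $\mathcal{C}^{\perp}$ reduces $\widetilde{A}$. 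Since moreover $u\in\dom(A_0)\cap\mathcal{C}^{\perp}$ forces $A_0u\in\mathcal{C}^{\perp}\subseteq\ran(\Pi)^{\perp}$ and hence $u\in\dom(\widetilde{A})$, the part of $\widetilde{A}$ in $\mathcal{C}^{\perp}$ is the self-adjoint operator $A_0|_{\mathcal{C}^{\perp}}$. Simplicity of $\widetilde{A}$ then forces $\mathcal{C}^{\perp}=\{0\}$, i.e. $v=0$.

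The main obstacle is this last step, and within it the verification that $\mathcal{C}^{\perp}$ genuinely reduces $\widetilde{A}$: the domain of $\widetilde{A}$ is prescribed through $\ran(\Pi)^{\perp}$, which a priori carries no compatibility with the spectral structure of $A_0$, and it is precisely the trivial-looking inclusion $\overline{\ran(\Pi)}\subseteq\mathcal{C}$ — making $\mathcal{C}^{\perp}\subseteq\ran(\Pi)^{\perp}$ and hence the $\mathcal{C}\oplus\mathcal{C}^{\perp}$ splitting compatible with $\dom(\widetilde{A})$ — that unlocks the argument. Everything else is routine bookkeeping with the resolvent formulae of Remark~\ref{rem:resolvent-formulae} and the boundedness of $\gamma(z)$ and of $(M(z)\pm\I I)^{-1}$.
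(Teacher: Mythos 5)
Your proof is correct, and its first half --- decoupling $\mathcal{Y}(z_+,z_-)$ into the two channels, filling the outer components $L_2(\reals_\pm,\mathcal{E})$ by restricting to data with $f_\pm(0)=0$, and thereby reducing both density claims to $\overline{\Span_{z_\pm}\mathcal{G}(z_+,z_-)}=\mathcal{H}$ --- coincides with the paper's argument. Where you genuinely diverge is in the one step where simplicity of $\widetilde{A}$ enters. The paper takes $u$ orthogonal to all $\gamma(z_\pm)(M(z_\pm)\pm\I I)^{-1}\mathcal{E}$, rewrites $\gamma(z)^*$ as $\Gamma_1(A_0-\cc{z}I)^{-1}$ via \eqref{eq:gamma-star-Gamma-1}, concludes that $(A_0-\cc{z}I)^{-1}u\in\dom(\widetilde{A})$ and hence $u\in\bigcap_{z\in\complex\setminus\reals}\ran(\widetilde{A}-zI)$, and then invokes Kre\u\i n's criterion \cite{MR0048704} that this intersection is trivial exactly when $\widetilde{A}$ is simple. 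You instead analytically continue $\gamma(z)^{*}v=\Pi^{*}(I-\cc{z}A_0^{-1})^{-1}v$ to a neighbourhood of the origin (legitimate, since $0\in\rho(A_0)$ and the function vanishes on the open set $\complex\setminus\reals$), extract the moment conditions $\Pi^{*}A_0^{-k}v=0$, and exhibit $\mathcal{C}^{\perp}=\bigl(\overline{\Span\{A_0^{-k}\ran(\Pi):k\ge0\}}\bigr)^{\perp}$ as a reducing subspace of $\widetilde{A}$ on which its part is the self-adjoint $A_0\vert_{\mathcal{C}^{\perp}}$, so that the definition of simplicity kills $\mathcal{C}^{\perp}$ directly. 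The delicate point in your route is the verification that $\dom(A_0)\cap\mathcal{C}^{\perp}\subset\dom(\widetilde{A})$, and it checks out: writing $A_0u=g_1+g_2$ with $g_1\in\mathcal{C}$, $g_2\in\mathcal{C}^{\perp}$ gives $A_0^{-1}g_1=u-A_0^{-1}g_2\in\mathcal{C}\cap\mathcal{C}^{\perp}=\{0\}$, whence $g_1=0$ by injectivity of $A_0^{-1}$. What your argument buys is self-containedness --- you effectively reprove the relevant direction of Kre\u\i n's criterion in this setting and identify the self-adjoint reducing part of $\widetilde{A}$ concretely as supported on the orthogonal complement of the $A_0^{-1}$-cyclic subspace generated by $\ran(\Pi)$; what it costs is length, the paper's version being a three-line reduction to a standard cited fact.
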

\begin{proof}
  To simplify notation, denote by $Y$ the closure of the first set in \eqref{eq:set-dense-in-pavlov}. It follows from
  Remark~\ref{rem:resolvent-formulae} that
  \begin{equation}
    \label{eq:first-second-inclusion}
    \{0\}\oplus\{0\}\oplus L_2(\reals_+,\mathcal{E})\subset Y,\qquad 
     L_2(\reals_-,\mathcal{E})\oplus\{0\}\oplus\{0\}\subset Y\,.
  \end{equation}
  Indeed, 
  putting $h_{-}=0\in L_2(\reals_-,\mathcal{E}),$ $h=0\in\mathcal{H}$ in
  \eqref{eq:other-vector-in-domain} with $z=z_+\in{\mathbb C}_+\subset\rho(\partial_{+})$, the first inclusion in (\ref{eq:first-second-inclusion}) follows.
  Similarly, by putting $h_{+}=0\in L_2(\reals_+,\mathcal{E}),$ $h=0\in\mathcal{H}$ in \eqref{eq:vector-in-domain} with $z=z_-\in{\mathbb C}_-\subset\rho(\partial_{-})$, the second inclusion in
  \eqref{eq:first-second-inclusion} follows.
  The inclusions (\ref{eq:first-second-inclusion}) imply that the
  orthogonal complement of $Y$ is a subset of $\cH$.

 It remains to show that
  \begin{equation}
    \overline{\Span_{z_\pm\in{\mathbb C}_\pm}\mathcal{G}(z_+,z_-)}=\mathcal{H}\,.
        \label{dot_eq}
\end{equation}
Using the formulae for the resolvent of the dilation (see \eqref{eq:vector-in-domain}
  for $z\in\complex_-,$ \eqref{eq:other-vector-in-domain} for $z\in\complex_+,$ and
Remark~\ref{rem:resolvent-formulae}), one immediately obtains 
\begin{equation}
  \label{eq:g-definition}
  \mathcal{G}(z_+,z_-)=\gamma(z_+)(M(z_+)+\I
    I)^{-1}\mathcal{E}+\gamma(z_-)(M(z_-)-\I I)^{-1}\mathcal{E}\,.
  \end{equation}
  Suppose that  $u\in \mathcal H$ is such that $u\perp G(z_+,z_-)$ for all $z_+\in{\mathbb C}_+,$ $z_-\in{\mathbb C}_-.$ 
Taking into account that vectors in $\mathcal E$ in \eqref{eq:g-definition} can be chosen independently in the first and second summands, 
 we obtain
  \begin{equation*}
    \inner{\gamma(z_+)(M(z_+)+\I
    I)^{-1}\mathbf{e}}{u}=\inner{\gamma(z_-)(M(z_-)-\I I)^{-1}\mathbf{e}}{u}=0\qquad\forall z_+\in{\mathbb C}_+,\ z_-\in{\mathbb C}_-,\ \mathbf{e}\in\mathcal{E}.
  \end{equation*}
In particular, for $z_+\in{\mathbb C}_+$ we have
  \begin{align*}
    0&=\inner{\gamma(z_+)(M(z_+)+\I I)^{-1}\mathbf{e}}{u}\\[0.3em]
      &=\inner{(M(z_+)+\I
                            I)^{-1}\mathbf{e}}{\gamma(z_+)^*u}
                            =\inner{(M(z_+)+\I
                            I)^{-1}\mathbf{e}}{\Gamma_1(A_0-\cc{z}_+)^{-1}u}\qquad\forall \mathbf{e}\in\mathcal{E}.
  \end{align*}
  Since $\overline{(M(z_+)+\I I)^{-1}\mathcal{E}}=\mathcal{E}$, it follows that $\Gamma_1(A_0-\cc{z}_+I)^{-1}u=0,$ and hence $(A_0-\cc{z}_+I)^{-1}u\in\dom(\widetilde{A})$. Finally, noticing that $(\widetilde{A}-\cc{z}_+I)(A_0-\cc{z}_+I)^{-1}u=u,$ we conclude that $u\in\ran(\widetilde{A}-\cc{z}_+I).$ Similarly, we establish that $u\in\ran(\widetilde{A}-\cc{z}_- I)$ for $z_-\in{\mathbb C}_-.$ Since $z_+\in\complex_+,$ $z_-\in\complex_-$ above are arbitrary, it follows that
  \begin{equation}
    \label{eq:inclusion-in-trivial}
    u\in\bigcap_{z\in\complex\setminus\reals}\ran(\widetilde{A}-zI)\,.
  \end{equation}
  The assumption that $\widetilde{A}$ is simple is equivalent (see \cite[Section 1.3]{MR0048704}) to the fact that the set on the right-hand side of \eqref{eq:inclusion-in-trivial} is trivial, and hence $u=0.$ This concludes the proof of (\ref{dot_eq}).\end{proof}


\begin{remark}
  The terms on the right-hand side of \eqref{eq:g-definition} are
  linearly independent.
\end{remark}

\begin{proof}
Assume that $\mathbf{e}_1, \mathbf{e}_2\in{\mathcal E}$ are such that
\begin{equation}
\gamma(z_+)\bigl(M(z_+)+\I I\bigr)^{-1}\mathbf{e}_1+\gamma(z_-)\bigl(M(z_-)-\I I\bigr)^{-1}\mathbf{e}_2=0.
\label{lin_comb}
\end{equation}
Applying $\Gamma_0$ and $\Gamma_1$ to (\ref{lin_comb}) and using the definition of $\gamma,$
we obtain
\begin{equation}\label{eq:idiocy}
\begin{gathered}
\bigl(M(z_+)+\I I\bigr)^{-1}\mathbf{e}_1+\bigl(M(z_-)-\I I\bigr)^{-1}\mathbf{e}_2=0,\\[0.3em]
  \mathbf{e}_1-\I\bigl(M(z_+)+\I I\bigr)^{-1}\mathbf{e}_1+\mathbf{e}_2+\I\bigl(M(z_-)-\I I\bigr)^{-1}\mathbf{e}_2=0,
\end{gathered}
\end{equation}
respectively. Substituting the first identity above into the second one yields
$$
\mathbf{e}_2+\bigl(M(z_+)-\I I\bigr)\bigl(M(z_+)+\I I\bigr)^{-1}\mathbf{e}_1=0.
$$
Then the first equality in \eqref{eq:idiocy} becomes
$(M(z_-)-\I I)^{-1}(M(z_+)-\I I)\mathbf{f}_1=\mathbf{f}_1$, where the substitution $\mathbf{f}_1:=(M(z_+)+\I I)^{-1}\mathbf{e}_1$ has been used.
It follows that $M(z_-)\mathbf{f}_1=M(z_+)\mathbf{f}_1$. Setting $z_-=\overline{z}_+,$ 
we obtain, in particular,
$\Im \inner{M(z_+)\mathbf{f}_1}{\mathbf{f}_1}=0.$ Combined with the property $\Im M(z_+)=(\Im z_+) \gamma^*(z_+)\gamma(z_+)$ (see (\ref{eq:difference-m-functions})), this implies $\|\gamma(z_+)\mathbf{f}_1\|=0$, which immediately leads to $\mathbf{f}_1=0$ due to \eqref{eq:properties-a-pi}. Finally, we infer $\mathbf{e}_1=(M(z_+)+\I I)\mathbf{f}_1=0$, as required.
\end{proof}

\section{Two-component spectral form of the functional model}
\label{sec:functional-model}

Following \cite{MR573902}, we introduce a Hilbert space in which we construct
a functional model for the operator family $A_{\alpha\beta},$ in the spirit of Pavlov  \cite{MR0365199, MR0510053, Drogobych}.
The functional model for completely
non-selfadjoint maximal dissipative operators that can be represented as additive perturbations of self-adjoint operators was constructed in \cite{MR0365199, MR0510053, Drogobych} and further developed in
\cite{MR573902} to include non-dissipative operators. In the context of boundary triples an analogous construction was carried out in \cite{MR2330831}. In the most general setting to date, namely the setting of adjoint operator pairs, an {\it explicit} three-component model akin to the one we presented in the previous section was constructed in \cite{BMNW2018}, which however stops short of constructing a ``spectral", two-component, form of the model, which is particularly convenient for the development of 
a scattering theory for operator pairs.\footnote{We refer the reader to the paper
\cite{Ryzhov_closed}, where a three-component model is constructed for a dissipative operator with at least one regular point in the upper half-plane.} In this section we 
we carry out such a construction, tailored to study operators of BVP, in the case when symbol of the operator is formally self-adjoint (but the operator itself can be non-selfadjoint due to the boundary conditions). 

Next, we recall some concepts relevant to the
construction of \cite{MR573902}. In what follows, we assume throughout that $\widetilde{A},$ see (\ref{eq:B-definition}), is simple and therefore $L$ is completely non-selfadjoint (see Proposition \ref{prop:completely-nonselfadjoint}).

A function $f,$ analytic on $\complex_\pm$ and taking values in $\mathcal{E},$ is said to be
in the Hardy class $H^2_\pm(\mathcal{E})$ when
\begin{equation*}
  \sup_{y>0}\int_\reals\bigl\Vert f(x\pm\I y)\bigr\Vert_{\mathcal{E}}^2dx<+\infty
\end{equation*}
({\it cf.} \cite[Sec.\,4.8]{MR822228}). If
$f\in H^2_\pm(\mathcal{E})$, then the left-hand side of the above
inequality defines $\norm{f}_{H^2_\pm(\mathcal{E})}^2.$ 

Any element in $H^2_\pm(\mathcal{E})$ can be associated with its
boundary values existing almost everywhere on the real line. It will
cause no confusion if we use the same notation,
$H_\pm^2(\mathcal{E})$, to denote the spaces of boundary functions. By
\cite[Sec.\,4.8, Thm.\,B]{MR822228}, $H_\pm^2(\mathcal{E})$ are
subspaces of $L^2(\reals,\mathcal{E})$. Also, due to the Paley-Wiener theorem
\cite[Sec.\,4.8, Thm.\,E]{MR822228}), one verifies that these
subspaces are the orthogonal complements of each other ({\it i.e.},
$L^2(\reals,\mathcal{E})=H^2_+(\mathcal{E})\oplus H^2_-(\mathcal{E})$).

We now return to the setup of Section \ref{sec:triples-bvp} and prove a fundamental regularity property for the expressions (\ref{eq:Gamma-0-L-through-M-gamma}), which is crucial for our construction.
\begin{lemma}
  \label{lem:bounded-hardy}
  Let the operators $\Gamma_0$ and $L$ be defined by (\ref{eq:definition-G-0}) and (\ref{eq:operator-L}), respectively. For all $h\in\mathcal{H}$, one has $\Gamma_0(L-\cdot)^{-1}h\in H_2^-(\mathcal{E})$ and $\Gamma_0(L^*-\cdot)^{-1}h\in H_2^+(\mathcal{E})$. Moreover,
  \begin{equation}
    \norm{\Gamma_0(L-\cdot)^{-1}h}_{H_2^-(\mathcal{E})}\le\sqrt{\pi}\norm{h}_{\mathcal{H}},\qquad
    \norm{\Gamma_0(L^*-\cdot)^{-1}h}_{H_2^+(\mathcal{E})}\le\sqrt{\pi}\norm{h}_{\mathcal{H}}\,.
    \label{trian}
  \end{equation}
\end{lemma}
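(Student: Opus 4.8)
The plan is to read off the required estimate from the self-adjointness of the dilation $\mathscr{A}$ of Section~\ref{sec:dilation-boundary-problem} via the explicit resolvent formulae of Remark~\ref{rem:resolvent-formulae}. Fix $h\in\mathcal{H}$ and $z\in\complex_-$, and set $\mathfrak{h}:=(0,h,0)^\top\in\mathscr{H}$. Taking $h_-=0$ and $h_+=0$ in \eqref{eq:vector-in-domain}, the third component of $(\mathscr{A}-zI)^{-1}\mathfrak{h}$ is $f_+=\I\sqrt{2}\,{\rm e}^{-\I z\cdot}\Gamma_0(L-zI)^{-1}h$. Writing $z=x-\I y$ with $y>0$, so that $|{\rm e}^{-\I z\xi}|^2={\rm e}^{-2y\xi}$ for $\xi>0$, a direct computation gives $\norm{f_+}_{L^2(\reals_+,\mathcal{E})}^2=2\norm{\Gamma_0(L-zI)^{-1}h}_{\mathcal{E}}^2\int_0^\infty{\rm e}^{-2y\xi}\,d\xi=y^{-1}\norm{\Gamma_0(L-zI)^{-1}h}_{\mathcal{E}}^2$. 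Since $\mathscr{H}$ is the orthogonal sum \eqref{eq:wider-hilbert-pavlov}, the norm of the third component of a vector in $\mathscr{H}$ does not exceed its full $\mathscr{H}$-norm, so that $\norm{\Gamma_0(L-zI)^{-1}h}_{\mathcal{E}}^2=y\norm{f_+}_{L^2(\reals_+,\mathcal{E})}^2\le y\norm{(\mathscr{A}-zI)^{-1}\mathfrak{h}}_{\mathscr{H}}^2$.

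Next I would integrate in $x$ at fixed $y>0$. By the spectral theorem for the self-adjoint operator $\mathscr{A}$, with $E$ its spectral measure, $\norm{(\mathscr{A}-(x-\I y)I)^{-1}\mathfrak{h}}_{\mathscr{H}}^2=\int_\reals\bigl((\lambda-x)^2+y^2\bigr)^{-1}\,d\norm{E(\lambda)\mathfrak{h}}^2$; combining this with the identity $\int_\reals\bigl((\lambda-x)^2+y^2\bigr)^{-1}\,dx=\pi/y$ and Tonelli's theorem yields $\int_\reals\norm{(\mathscr{A}-(x-\I y)I)^{-1}\mathfrak{h}}_{\mathscr{H}}^2\,dx=(\pi/y)\norm{h}_{\mathcal{H}}^2$. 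Together with the pointwise bound above this produces
\[
\int_\reals\norm{\Gamma_0(L-(x-\I y)I)^{-1}h}_{\mathcal{E}}^2\,dx\le y\cdot\frac{\pi}{y}\,\norm{h}_{\mathcal{H}}^2=\pi\norm{h}_{\mathcal{H}}^2
\]
for every $y>0$. Since $z\mapsto\Gamma_0(L-zI)^{-1}h=-(M(z)-\I I)^{-1}\gamma(\cc z)^*h$ is analytic on $\complex_-$ by \eqref{eq:Gamma-0-L-through-M-gamma} and the holomorphy of the resolvent of the maximal dissipative operator $L$, taking the supremum over $y>0$ shows $\Gamma_0(L-\cdot)^{-1}h\in H_2^-(\mathcal{E})$ together with the first inequality in \eqref{trian}.

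Finally, the assertion for $L^*$ follows by the same reasoning applied with $z\in\complex_+$: taking $h_-=0$, $h_+=0$ in \eqref{eq:other-vector-in-domain}, the first component of $(\mathscr{A}-zI)^{-1}(0,h,0)^\top$ equals $-\I\sqrt{2}\,{\rm e}^{-\I z\cdot}\Gamma_0(L^*-zI)^{-1}h$, and with $z=x+\I y$ one has $|{\rm e}^{-\I z\xi}|^2={\rm e}^{2y\xi}$ for $\xi<0$, whence $\int_{-\infty}^0{\rm e}^{2y\xi}\,d\xi=1/(2y)$ reproduces the identical chain of inequalities, now invoking analyticity on $\complex_+$ via the second line of \eqref{eq:Gamma-0-L-through-M-gamma}. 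I do not expect any genuine obstacle here; the only points demanding a little care are the correct identification of the relevant ``channel'' component of the dilation resolvent and the interchange of integrations in the spectral-theorem step.
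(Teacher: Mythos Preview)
Your proof is correct and, although it shares the final spectral-theorem step with the paper, it reaches the pointwise norm bound by a different route. The paper's argument begins from Green's identity \eqref{eq:green-formula} to derive the exact relation
\[
\norm{\Gamma_0(L-zI)^{-1}h}^2=\frac{1}{2\I}\Bigl[\inner{h}{(L-zI)^{-1}h}-\inner{(L-zI)^{-1}h}{h}+(z-\cc z)\norm{(L-zI)^{-1}h}^2\Bigr],
\]
and then invokes only the abstract existence of a self-adjoint dilation to convert this into $y\bigl[\norm{(\mathscr{A}-zI)^{-1}h}^2-\norm{(L-zI)^{-1}h}^2\bigr]$; the paper explicitly remarks that the concrete dilation of Section~\ref{sec:dilation-boundary-problem} is not needed. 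You instead exploit that concrete dilation directly: reading off the ``outgoing channel'' component $f_+$ from \eqref{eq:vector-in-domain} and bounding its $L^2(\reals_+,\mathcal{E})$-norm by the full dilation-resolvent norm gives the required inequality without ever writing down Green's formula. In fact your inequality is sharp: since $f_-=0$ when $h_-=h_+=0$, the two computations yield the same identity once one also subtracts $\norm{f}^2=\norm{(L-zI)^{-1}h}^2$. Your approach is shorter and more transparent in this setting; the paper's approach has the minor advantage of being model-independent, working for any dilation of $L$.
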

\begin{proof}
  The resoning goes along the lines of the proof of
  \cite[Lem.\,2.4]{MR2330831} which in turn is based on the one of
  \cite[Thm.\,1]{MR573902}.

  Suppose that $z\in{\mathbb C}_-.$ Using the Green's identity \eqref{eq:green-formula} and the fact that
  $L\subset A,$ we obtain, for all $h\in{\mathcal H},$ 
    \begin{align*}
      2\I\norm{\Gamma_0(L-zI)^{-1}h}^2
         &=\inner{\I\Gamma_0(L-zI)^{-1}h}{\Gamma_0(L-zI)^{-1}h}
           -\inner{\Gamma_0(L-zI)^{-1}h}{\I\Gamma_0(L-zI)^{-1}h}\\[0.4em]
         &=\inner{\Gamma_1(L-zI)^{-1}h}{\Gamma_0(L-zI)^{-1}h}
           -\inner{\Gamma_0(L-zI)^{-1}h}{\Gamma_1(L-zI)^{-1}h}\\[0.4em]
         &=\inner{L(L-zI)^{-1}h}{(L-zI)^{-1}h}
           -\inner{(L-zI)^{-1}h}{L(L-zI)^{-1}h}\\[0.4em]
         &=\inner{h}{(L-zI)^{-1}h}-\inner{(L-zI)^{-1}h}{h}
           +(z-\cc{z})\norm{(L-zI)^{-1}h}^2\,.
    \end{align*}
    Since $L$ is maximal dissipative, it admits a self-adjoint dilation $\mathscr{A}$ \cite{MR2760647}.  (In the case of the operator $L$ considered here, this dilation is given explicitly by Theorem \ref{thm:minimal-dilation}. However, we do not require this fact here.)
    One concludes, by resorting to the resolvent identity,  that
    \begin{align*}
      \norm{\Gamma_0(L-zI)^{-1}h}^2
      &=\frac{1}{2\I}\left\{\inner{\bigl[(\mathscr{A}-\cc{z}I)^{-1}
      -(\mathscr{A}-zI)^{-1}\bigr]h}{h}+(z-\cc{z})\norm{(L-zI)^{-1}h}^2\right\}\\[0.35em]
      &=\frac{1}{2\I}\left\{(\cc{z}-z)\norm{(\mathscr{A}-zI)^{-1}h}^{2}
        +(z-\cc{z})\norm{(L-zI)^{-1}h}^2\right\}\,.
    \end{align*}
    Denoting by $E(t)$, $t\in\reals,$ the resolution of identity \cite[Chapter 6]{MR1192782} for $\mathscr{A}$ and setting $z=k-\I\epsilon$, $k\in\reals,$ $\epsilon>0,$ one has
    \begin{equation*}
      \norm{\Gamma_0(L-(k-\I\epsilon)I)^{-1}h}^2
      =\epsilon\int_{\reals}\frac{d\inner{E(t)h}{h}}{(t-k)^{2}+
        \epsilon^{2}}-\epsilon\norm{(L-(k-\I\epsilon)I)^{-1}h}^2\,.
    \end{equation*}
    Now, using Fubini's theorem, we obtain
    \begin{align*}
      \int_{\reals}\norm{\Gamma_0(L-(k-\I\epsilon)I)^{-1}h}^2dk&=\int_{\reals}\left(\epsilon\int_{\reals}\frac{d\inner{E(t)h}{h}}
      {(t-k)^{2}+\epsilon^{2}}\right)dk-\epsilon\int_{\reals}
      \norm{(L-(k-\I\epsilon)I)^{-1}h}^{2}dk\\[0.6em]
      &=\int_{\reals}\left(\epsilon\int_{\reals}\frac{dk}
      {(t-k)^{2}+\epsilon^{2}}\right)d\inner{E(t)h}{h}-\epsilon\int_{\reals}
        \norm{(L-(k-\I\epsilon)I)^{-1}h}^{2}dk\\[0.6em]
      &=\pi\norm{h}^{2}-\epsilon\int_{\reals}
        \norm{(L-(k-\I\epsilon)I)^{-1}h}^{2}dk.
    \end{align*}
Taking supremum with respect to $\varepsilon,$ it follows that
\begin{equation*}
  \norm{\Gamma_{0}(L-\cdot I)^{-1}h}^{2}_{H_{2}^{-}(\mathcal{E})}\le\pi\norm{h}^{2}.
\end{equation*}
The second inequality in (\ref{trian}) of the lemma is proven in the same way.
\end{proof}

As mentioned in Section~\ref{sec:triples-bvp}, the characteristic
function $S$, given in \eqref{eq:def-characteristic-of-L}, has
nontangential limits almost everywhere on the real line in the strong
topology. Thus, for a two-component vector function
$\binom{\tilde{g}}{g}\in L^2({\mathbb R}, \mathcal{E})\oplus L^2({\mathbb R}, \mathcal{E}),$ the integral\footnote{This is in fact the same construction as proposed by \cite{Drogobych} and further developed by \cite{MR573902}. Henceforth in this section we follow closely the analysis of the named two papers, facilitated by the fact that essentially this way to construct the functional model only relies upon the characteristic function $S$ of the maximal dissipative operator and an estimate of the type claimed in Lemma \ref{lem:bounded-hardy} above. A similar argument for extensions of symmetric operators, based on the theory of boundary triples, was developed in \cite{MR2330831}, \cite{ChKS_OTAA}.}

\begin{equation}
  \label{eq:inner-in-functional}
  \int_\reals\inner{\begin{pmatrix} I & S^*(s)\\
      S(s) &
      I\end{pmatrix}
    \binom{\widetilde{g}(s)}{g(s)}}{\binom{\widetilde{g}(s)}{g(s)}}_{\mathcal{E}\oplus
    \mathcal{E}}ds,
\end{equation}
makes sense and is nonnegative due to the contractive properties of $S$.
The space
\begin{equation*}
\mathfrak{H}:=L^2\Biggl(\mathcal{E}\oplus\mathcal{E}; \begin{pmatrix}
    I & S^*\\
    S & I
  \end{pmatrix}\Biggr)
\end{equation*}
is the completion of the linear set of two-component vector functions
$\binom{\tilde{g}}{g}: {\mathbb R}\to \mathcal{E}\oplus \mathcal{E}$ with respect to the norm (\ref{eq:inner-in-functional}), where a factorisation by vectors on which (\ref{eq:inner-in-functional}) vanishes is assumed.
Naturally, not every element of the set can be identified with a pair
$\binom{\tilde{g}}{g}$ of two independent
functions, however we keep the notation
$\binom{\tilde{g}}{g}$ for the elements of this space.

  Another consequence of the contractive properties of the
  characteristic function $S$ is the inequalities
\begin{equation*}
  \norm{\widetilde{g}+S^*g}_{L^2(\reals,\mathcal{E})}\le\norm{\binom{\widetilde{g}}{g}}_\mathfrak{H},\quad
  \norm{S\widetilde{g}+g}_{L^2(\reals,\mathcal{E})}\le\norm{\binom{\widetilde{g}}{g}}_\mathfrak{H}\qquad\forall \widetilde{g}, g\in L^2({\mathbb R}, \mathcal{E}).
  \end{equation*}
  They imply, in particular, that for every sequence
  $\{\binom{\tilde{g}_n}{g_n}\}_{n=1}^\infty$ that is Cauchy with
  respect to the $\mathfrak{H}$-topology and
  such that $\widetilde{g}_n,g_n\in L^2(\reals,\mathcal{E})$ for all
  $n\in\nats$, the limits of $\widetilde{g}_n+S^*g_n$ and
  $S\widetilde{g}_n+g_n$ exists in $L^2(\reals,\mathcal{E})$, so that the
  objects $\widetilde{g}+S^*g$ and
  $S\widetilde{g}+g$ can always be treated as $L^2(\reals,\mathcal{E})$ functions.\footnote{In general, $\widetilde{g}+S^*g$ and $S\widetilde{g}+g$ are not independent of each other, see \cite{KisNab}.}

Consider the following subspaces of $\mathfrak{H}:$\footnote{In the language of scattering theory \cite{MR0217440}, the subspaces 
$\mathfrak{D}_-,$ $\mathfrak{D}_+$ are ``incoming" and ``outgoing" subspaces, respectively, for the group of translations of $\mathfrak{H},$ as was first observed in \cite{MR0365199}.}
\begin{equation}
  \label{eq:D-spaces}
  \mathfrak{D}_-:=
  \begin{pmatrix}
    0\\
    {H}^2_-(\mathcal{E})
  \end{pmatrix},\quad
 \mathfrak{D}_+:=
  \begin{pmatrix}
   {H}^2_+(\mathcal{E})\\
   0
 \end{pmatrix}.
\end{equation}
It is easily seen \cite{Drogobych} that the spaces $\mathfrak{D}_-$
and $\mathfrak{D}_+$ are mutually orthogonal in $\mathfrak{H}$.

Define the subspace
\begin{equation}
  \label{eq:definition-of-K}
  K:=\mathfrak{H}\ominus(\mathfrak{D}_-\oplus \mathfrak{D}_+),
\end{equation}
which is characterised as follows (see \cite{MR0365199, Drogobych}):
\begin{equation*}
  K=\left\{\begin{pmatrix}
   \widetilde{g}\\
   g
  \end{pmatrix}\in\mathfrak{H}: \widetilde{g}+S^*g\in  H^2_-({\mathcal E}),\ S\widetilde{g}+g\in
 H^2_+({\mathcal E})\right\}\,.
\end{equation*}
The orthogonal projection $P_K$ onto 
$K$ is given by (see \eg \cite{MR0500225})
\begin{equation}
\label{eq:pk-action}
  P_K
  \begin{pmatrix}
    \widetilde{g}\\
    g
  \end{pmatrix}
=
\begin{pmatrix}
  \widetilde{g}-P_+(\widetilde{g}+S^*g)\\[0.5em]
  g-P_-(S\,\widetilde{g}+g)
\end{pmatrix},
\end{equation}
where $P_\pm$ are the orthogonal Riesz projections in $L^2(E)$ onto
$H^2_\pm(E)$.

\begin{definition}[\cite{MR2330831}]
  \label{def:F-mappings}
 The mappings $\mathscr{F}_\pm:\mathscr{H}\to
  L_2(\reals,\mathcal{E})$ are defined by
  \begin{equation*}
    \mathscr{F}_+\begin{pmatrix}
    v_-\\ v\\ v_+
  \end{pmatrix}
    :=-\frac{1}{\sqrt{\pi}}\Gamma_0(L-(\cdot-\I
    0)I)^{-1}v+\bigl(S^*\widehat{v}_-+\widehat{v}_+\bigr)(\cdot)
  \end{equation*}
  and
  \begin{equation*}
    \mathscr{F}_-\begin{pmatrix}
    v_-\\ v\\ v_+
  \end{pmatrix}
    :=-\frac{1}{\sqrt{\pi}}\Gamma_0(L^*-(\cdot+\I
    0)I)^{-1}v+\bigl(\widehat{v}_-+S\widehat{v}_+\bigr)(\cdot).
  \end{equation*}
\end{definition}

Based on the above definition, we will now introduce a map from ${\mathscr H}$ to ${\mathfrak H},$ which will prove to be unitary. We will then show that ${\mathfrak H}$ serves as a representation space for the spectral form of the functional model discussed in Section \ref{sec:dilation-boundary-problem}. We implement this strategy in Lemmata \ref{lem:f-pm-phi}--\ref{lem:phi-on-to}.
\begin{lemma}
  \label{lem:f-pm-phi}
  Fix $z_{+}\in\complex_{+},$ $z_{-}\in\complex_{-},$ and consider the map
  \[
 \Phi: \begin{pmatrix}L_2(\reals_-,\mathcal{E})\\[0.4em]
  \mathcal{G}(z_+,z_-)\\[0.4em]
  L_2(\reals_+,\mathcal{E})\end{pmatrix}\ni
    \begin{pmatrix}
      v_-\\ v\\ v_+
    \end{pmatrix}\mapsto
    \begin{pmatrix}
      \widehat{v}_+(\cdot)+\dfrac{\I}{\sqrt{2\pi}}
      \left[\dfrac{1}{\cdot-z_-}S^*(\cc{z}_-)w_--\dfrac{1}{\cdot-z_+}w_+\right]\\[1.2em]
       \widehat{v}_-(\cdot)-\dfrac{\I}{\sqrt{2\pi}}\left[
        \dfrac{1}{\cdot-z_-}w_--\dfrac{1}{\cdot-z_+}S(z_+)w_+\right]
    \end{pmatrix}\in\mathfrak{H},
  \end{equation*}
  where $w_+,$ $w_-\in\mathcal{E}$ are determined uniquely, by Remark 2, from
  \begin{equation}
 \label{eq:v-in-G}
    v=\sqrt{2}\gamma(z_+)(M(z_+)+\I
    I)^{-1}w_++\sqrt{2}\gamma(z_-)(M(z_-)-\I I)^{-1}w_-.
  \end{equation}

The map $\Phi$ satisfies
\begin{equation}
  \label{eq:phi-f-pm}
  \begin{pmatrix}
    I & S^* \\
    S & I
  \end{pmatrix}
  \Phi\begin{pmatrix}
      v_-\\ v\\ v_+
    \end{pmatrix}=
    \begin{pmatrix}
      \mathscr{F}_+
      \begin{pmatrix}
      v_-\\[-1mm] v\\[-1mm] v_+
    \end{pmatrix}\\[5mm]
    \mathscr{F}_-
      \begin{pmatrix}
      v_-\\[-1mm] v\\[-1mm] v_+
    \end{pmatrix}
    \end{pmatrix},\qquad\begin{pmatrix}
      v_-\\ v\\ v_+
    \end{pmatrix}\in\begin{pmatrix}L_2(\reals_-,\mathcal{E})\\[0.4em]
  \mathcal{G}(z_+,z_-)\\[0.4em]
  L_2(\reals_+,\mathcal{E})\end{pmatrix}.
\end{equation}
\end{lemma}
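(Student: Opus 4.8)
The plan is to verify the identity \eqref{eq:phi-f-pm} componentwise by a direct computation, exploiting the explicit form \eqref{eq:v-in-G} of $v$ together with the resolvent formulae \eqref{eq:Gamma-0-L-through-M-gamma} for $\Gamma_0(L-zI)^{-1}$ and $\Gamma_0(L^*-zI)^{-1}$. First I would compute $\Gamma_0(L-(\cdot-\I0)I)^{-1}v$ and $\Gamma_0(L^*-(\cdot+\I0)I)^{-1}v$ using the decomposition \eqref{eq:v-in-G}. On the summand $\sqrt{2}\gamma(z_+)(M(z_+)+\I I)^{-1}w_+$, I would use that $\gamma(z_+)(M(z_+)+\I I)^{-1}w_+\in\ker(A-z_+I)$ together with $\Gamma_0(L-sI)^{-1}$ acting on a vector in $\ker(A-z_+I)$; the relevant identity is that for $u\in\ker(A-z_+I)$ one has $(L-sI)^{-1}u = (z_+-s)^{-1}u$ plus a correction lying in $\dom(L)$, which after applying $\Gamma_0$ and using \eqref{eq:Gamma-0-L-through-M-gamma} together with \eqref{eq:def-characteristic-of-L} produces the rational terms $\tfrac{1}{s-z_+}$ weighted by $I$ and by $S(z_+)$. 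The analogous computation on the $z_-$ summand for $\Gamma_0(L^*-sI)^{-1}$ produces the $\tfrac{1}{s-z_-}$ terms weighted by $I$ and $S^*(\cc{z}_-)$. The Hardy-class regularity from Lemma \ref{lem:bounded-hardy} guarantees these boundary traces exist as $L^2$-functions so that the manipulations are legitimate.

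Next, with explicit expressions in hand for both $\Gamma_0(L-(\cdot-\I0)I)^{-1}v$ and $\Gamma_0(L^*-(\cdot+\I0)I)^{-1}v$, I would substitute them into the definitions of $\mathscr{F}_+$ and $\mathscr{F}_-$ from Definition \ref{def:F-mappings}, recalling that here $v_\pm$ contribute the terms $S^*\widehat{v}_- + \widehat{v}_+$ and $\widehat{v}_-+S\widehat{v}_+$ respectively. On the other side of \eqref{eq:phi-f-pm}, I would multiply the column $\Phi(v_-,v,v_+)^\top$ by the matrix $\begin{pmatrix} I & S^*\\ S & I\end{pmatrix}$ and expand. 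The top entry becomes $\widehat{v}_+ + S^*\widehat{v}_- + \tfrac{\I}{\sqrt{2\pi}}[\tfrac{1}{\cdot-z_-}S^*(\cc{z}_-)w_- - \tfrac{1}{\cdot-z_+}w_+] + S^*(\cdot)\{\widehat{v}_- - \tfrac{\I}{\sqrt{2\pi}}[\tfrac{1}{\cdot-z_-}w_- - \tfrac{1}{\cdot-z_+}S(z_+)w_+]\}$; the pure $\widehat{v}_\pm$ part matches $\mathscr{F}_+$ immediately, so what remains is to check that the rational-function part equals $-\tfrac{1}{\sqrt\pi}\Gamma_0(L-(\cdot-\I0)I)^{-1}v$. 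This reduces, after cancelling the $\widehat v$ terms, to an identity purely among the rational $\mathcal E$-valued functions, and the same for the bottom entry against $\mathscr{F}_-$.

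The crux of the argument, and where I expect the real work to be, is the bookkeeping in that rational-function identity: one must see that $S^*(s)$ multiplying the $z_-$-term $\tfrac{1}{s-z_-}w_-$ combines with the explicitly present $\tfrac{1}{s-z_-}S^*(\cc z_-)w_-$ term in a way that produces exactly what $\Gamma_0(L-(s-\I0)I)^{-1}$ of the $z_-$-summand of $v$ gives, while the $z_+$-terms reassemble into $\Gamma_0(L-(s-\I0)I)^{-1}$ of the $z_+$-summand. The key algebraic fact needed is a functional equation relating $S^*(s)$ evaluated on the boundary to $S^*(\cc z_-)$ and $S(z_+)$ — essentially the identity $(M(s)-\I I)^{-1}(M(z)-\I I) = \tfrac{1}{2\I}[S^*(\bar s) - S^*(\bar s)S(z)]$-type relations coming from \eqref{eq:def-characteristic-of-L}, \eqref{eq:adjoint-characteristic-of-L}, and the resolvent-type identity \eqref{eq:difference-m-functions}. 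One should also carefully track the pole structure: the apparent poles at $s=z_\pm$ of the various terms in $\mathscr{F}_\pm(v_-,v,v_+)^\top$ must cancel against the $\tfrac{1}{s-z_\pm}$ terms in $\Phi$, which is automatic once $v\in\mathcal{G}(z_+,z_-)$ is used. I would organise the computation by separating the contributions of the $z_+$-part and the $z_-$-part of $v$, handling each via one of the two lines of \eqref{eq:Gamma-0-L-through-M-gamma}, and then checking the two matrix rows in parallel, noting that the second row is obtained from the first by the formal substitution $(L,z_-,S^*,+\I)\leftrightarrow(L^*,z_+,S,-\I)$ so that only one genuine verification is required.
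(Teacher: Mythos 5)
Your proposal is correct and follows essentially the same route as the paper: reduce by linearity to the case $v_\pm=0$, then verify the remaining rational-function identity in each row of \eqref{eq:phi-f-pm} via \eqref{eq:Gamma-0-L-through-M-gamma}, \eqref{eq:difference-m-functions} and the second resolvent identity for $(M(z)\mp\I I)^{-1}$, which is exactly how the paper establishes the two key relations \eqref{eq:to-satisfy-for-w-} and \eqref{eq:to-satisfy-for-w+} before passing to boundary values. Two harmless slips that do not affect the argument: the correction term in your decomposition of $(L-sI)^{-1}u$ for $u\in\ker(A-z_+I)$ lies in $\ker(A-sI)$ rather than in $\dom(L)$, and your written expansion of the top entry counts $S^*\widehat{v}_-$ twice.
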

\begin{proof}

  Taking into account Definition~\ref{def:F-mappings}, one immediately verifies that
  \eqref{eq:phi-f-pm} holds for $v=0$. Since $\Phi$, $\mathscr{F}_\pm$ are linear, it only remains to
  prove the assertion when $v_-=0\in L_2(\reals_-,\mathcal{E}),$ $v_+=0\in L_2(\reals_+,\mathcal{E}).$ Under this assumption, consider
  the first row in the  vector equality \eqref{eq:phi-f-pm}, where $v$ is replaced by the formula (\ref{eq:v-in-G}):
   \begin{equation}
   \begin{aligned}
       &\dfrac{\I}{\sqrt{2\pi}}\left[
      \dfrac{1}{\cdot-z_-}S^*(\cc{z}_-)w_--
      \dfrac{1}{\cdot-z_+}w_+\right] -\dfrac{\I}{\sqrt{2\pi}}S^*(\cdot)
      \left[\dfrac{1}{\cdot-z_-}w_--\dfrac{1}{\cdot-z_+}S(z_+)w_+\right]\\[0.5em]
    &=-\sqrt{\dfrac{2}{\pi}}\Gamma_0\bigl(L-(\cdot-\I0)I\bigr)^{-1}\left[\gamma(z_+)\bigl(M(z_+)+\I
    I\bigr)^{-1}w_++\gamma(z_-)\bigl(M(z_-)-\I I\bigr)^{-1}w_-\right]
  \end{aligned}
  \label{need_to_show}
  \end{equation}
In what follows, we show that
\begin{equation}
  \label{eq:to-satisfy-for-w-}
  \dfrac{\I}{\cdot-z_-}\bigl[S^*(\cdot)-S^*(\cc{z}_-)\bigr]w_-=
  2\Gamma_0\bigl(L-(\cdot-\I0)I\bigr)^{-1}\gamma(z_-)\bigl(M(z_-)-\I I\bigr)^{-1}w_-
\end{equation}
and
\begin{equation}
  \label{eq:to-satisfy-for-w+}
  \dfrac{\I}{\cdot-z_+}\bigl[I-S^*(\cdot)S(z_+)\bigr]w_+=
  2\Gamma_0\bigl(L-(\cdot-\I0)I\bigr)^{-1}\gamma(z_+)\bigl(M(z_+)+\I I\bigr)^{-1}w_+,
\end{equation}
and therefore (\ref{need_to_show}) holds, as required. To verify (\ref{eq:to-satisfy-for-w-}) first, consider $z\in\complex_{-}$.  Using the second resolvent identity, it follows
from \eqref{eq:adjoint-characteristic-of-L} that
\begin{equation}
\begin{aligned}
  \dfrac{1}{z-z_-}\bigl[S^*(\cc{z})-S^*(\cc{z}_-)\bigr]&=
  \dfrac{2\I}{z-z_{-}}\Bigl[\bigl(M(z)-\I I\bigr)^{-1}-\bigl(M(z_-)-\I I\bigr)^{-1}\Bigr]\\
  &= \dfrac{2\I}{z-z_{-}}\bigl(M(z)-\I I\bigr)^{-1}\bigl[M(z)-M(z_-)\bigr]\bigl(M(z_-)-\I I\bigr)^{-1}
  \end{aligned}
  \label{eq:auxiliar-difference-m}
\end{equation}
Therefore, by \eqref{eq:difference-m-functions}, \eqref{eq:Gamma-0-L-through-M-gamma},
one has
\begin{equation}
\label{eq:already-done}
\begin{split}
  \frac{1}{z-z_-}\bigl[S^*(\cc{z})-S^*(\cc{z}_-)\bigr]&=
  2\I(M(z)-\I I)^{-1}\gamma(\cc{z})^*\gamma(z_-)\bigl(M(z_-)-\I I\bigr)^{-1}\\
   &= -2\I\Gamma_0(L-z I)^{-1}\gamma(z_-)\bigl(M(z_-)-\I I\bigr)^{-1}\,.
   \end{split}
 \end{equation}
Passing to the limit as $z$ approaches a real value, we infer that 
\eqref{eq:to-satisfy-for-w-} is satisfied for all $w_-\in\mathcal{E}$. To prove \eqref{eq:to-satisfy-for-w+} for all $w_+\in\mathcal{E}$, we proceed in a similar way. By
straightforward calculations, one has, for $z\in{\mathbb C}_-,$
\begin{align*}
  \frac{1}{z-z_+}\bigl[I -S^*(\cc{z})S(z_+)\bigr]
&=-\frac{2\I}{z-z_{+}}
  \Bigl[\bigl(M(z)-\I I\bigr)^{-1}-\bigl(M(z_+)+\I I\bigr)^{-1}
  - 2\I\bigl(M(z)-\I I\bigr)^{-1}\bigl(M(z_+)+\I I\bigr)^{-1}\Bigr]\\
  &=-\frac{2\I}{z-z_{+}}\bigl(M(z)-\I I\bigr)^{-1}\bigl[(M(z_+)+\I
    I)-\bigl(M(z)-\I I\bigr)-2\I I\bigr]\bigl(M(z_+)+\I I\bigr)^{-1}\\
  & =\frac{2\I}{z-z_{+}}\bigl(M(z)-\I I\bigr)^{-1}
    \bigl[M(z)-M(z_+)\bigr]\bigl(M(z_+)+\I I\bigr)^{-1}
\end{align*}
Proceeding in the same way as (\ref{eq:already-done}) was obtained from (\ref{eq:auxiliar-difference-m}), one obtains
\begin{equation*}
  \frac{1}{z-z_+}\bigl(I -S^*(\cc{z})S(z_+)\bigr)
  =-2\I\Gamma_0(L-z I)^{-1}\gamma(z_+)\bigl(M(z_+)-\I I\bigr)^{-1},
\end{equation*}
which, by passing to the limit as $z$ approaches the real line, yields the required property. 


The second entry of the vector equality \eqref{eq:phi-f-pm} is proved
in a similar way.
\end{proof}
\begin{lemma}
  \label{lem:isometry-Ds}
The mapping $\Phi$, given in Lemma~\ref{lem:f-pm-phi},
is an isometry from
$L_2(\reals_-,\mathcal{E})\oplus\{0\}\oplus L_2(\reals_+,\mathcal{E})$ onto $\mathfrak{D}_-\oplus\mathfrak{D}_+$.
\end{lemma}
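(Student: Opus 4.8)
The plan is to use the fact that the restriction of $\Phi$ to the subspace where the middle slot vanishes degenerates to the Paley--Wiener correspondence, with the characteristic function playing no role. First I would note that if $v=0$ in \eqref{eq:v-in-G}, then the uniquely determined coefficients $w_+,w_-\in\mathcal{E}$ (see Remark~2) are both zero, so the formula for $\Phi$ in Lemma~\ref{lem:f-pm-phi} collapses to
\[
  \Phi\begin{pmatrix} v_-\\ 0\\ v_+\end{pmatrix}=\begin{pmatrix}\widehat{v}_+\\ \widehat{v}_-\end{pmatrix},\qquad v_-\in L_2(\reals_-,\mathcal{E}),\ v_+\in L_2(\reals_+,\mathcal{E}).
\]
By the Paley--Wiener theorem (already invoked in this section), the Fourier transform $v_+\mapsto\widehat{v}_+$ is a unitary map of $L_2(\reals_+,\mathcal{E})$ onto $H^2_+(\mathcal{E})$ and $v_-\mapsto\widehat{v}_-$ is a unitary map of $L_2(\reals_-,\mathcal{E})$ onto $H^2_-(\mathcal{E})$. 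Hence $\binom{\widehat{v}_+}{\widehat{v}_-}$ runs exactly through $\binom{H^2_+(\mathcal{E})}{H^2_-(\mathcal{E})}=\mathfrak{D}_+\oplus\mathfrak{D}_-$ as $(v_-,0,v_+)^\top$ runs through $L_2(\reals_-,\mathcal{E})\oplus\{0\}\oplus L_2(\reals_+,\mathcal{E})$; this already gives surjectivity onto $\mathfrak{D}_-\oplus\mathfrak{D}_+$, and since $\Phi$ is plainly linear it remains only to check that it preserves norms.

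For the isometry, I would expand the $\mathfrak{H}$-norm using the quadratic form \eqref{eq:inner-in-functional}, which after using $\inner{S^*\widehat{v}_-}{\widehat{v}_+}=\overline{\inner{S\widehat{v}_+}{\widehat{v}_-}}$ gives
\[
  \left\|\binom{\widehat{v}_+}{\widehat{v}_-}\right\|_{\mathfrak{H}}^2
  =\norm{\widehat{v}_+}_{L^2(\reals,\mathcal{E})}^2+\norm{\widehat{v}_-}_{L^2(\reals,\mathcal{E})}^2
   +2\re\inner{S\widehat{v}_+}{\widehat{v}_-}_{L^2(\reals,\mathcal{E})}.
\]
The crux is to show the cross term vanishes. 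Since $S$ is analytic on $\complex_+$ with $\norm{S(z)}\le 1$ there (as recorded after \eqref{eq:adjoint-characteristic-of-L}), multiplication by $S$ maps $H^2_+(\mathcal{E})$ into $H^2_+(\mathcal{E})$, and the boundary function of $S\widehat{v}_+$ is the almost everywhere pointwise product $S(\cdot)\widehat{v}_+(\cdot)$; as $\widehat{v}_-\in H^2_-(\mathcal{E})$ and $H^2_+(\mathcal{E})\perp H^2_-(\mathcal{E})$ in $L^2(\reals,\mathcal{E})$, the inner product $\inner{S\widehat{v}_+}{\widehat{v}_-}$ vanishes. Therefore $\norm{\Phi(v_-,0,v_+)^\top}_{\mathfrak{H}}^2=\norm{\widehat{v}_+}_{L^2(\reals,\mathcal{E})}^2+\norm{\widehat{v}_-}_{L^2(\reals,\mathcal{E})}^2$, and Plancherel's theorem identifies the right-hand side with $\norm{v_+}_{L_2(\reals_+,\mathcal{E})}^2+\norm{v_-}_{L_2(\reals_-,\mathcal{E})}^2$, i.e.\ the squared norm of the argument in $L_2(\reals_-,\mathcal{E})\oplus\{0\}\oplus L_2(\reals_+,\mathcal{E})$. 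Combined with the range computation of the first paragraph, this proves that $\Phi$ maps this subspace isometrically onto $\mathfrak{D}_-\oplus\mathfrak{D}_+$.

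The only slightly delicate point --- the ``main obstacle'', such as it is --- is the invariance $S\cdot H^2_+(\mathcal{E})\subset H^2_+(\mathcal{E})$ together with the multiplicativity of boundary values; both follow from the analyticity and uniform contractivity of $S$ on $\complex_+$ and the standard fact that the nontangential limit of a product of an $H^\infty$ operator-valued function with an $H^2$ function equals the product of the limits. Everything else is the Paley--Wiener/Plancherel isomorphism between $L_2(\reals_\pm,\mathcal{E})$ and $H^2_\pm(\mathcal{E})$ and elementary bookkeeping with the form defining $\norm{\cdot}_{\mathfrak{H}}$.
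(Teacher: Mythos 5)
Your proposal is correct and follows essentially the same route as the paper: reduce $\Phi$ on this subspace to the Fourier-transform pair $(v_-,0,v_+)\mapsto(\widehat v_+,\widehat v_-)$, use Paley--Wiener/Plancherel, and kill the cross term. The only cosmetic difference is that you prove the vanishing of the cross term (i.e.\ the orthogonality of $\mathfrak{D}_-$ and $\mathfrak{D}_+$ in $\mathfrak{H}$, via $S H^2_+(\mathcal{E})\subset H^2_+(\mathcal{E})$) inline, whereas the paper invokes this orthogonality as already recorded after \eqref{eq:D-spaces}.
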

\begin{proof}
  Clearly, for all $v_\pm\in L_2(\reals_\pm,\mathcal{E}),$ 
  one has
  \begin{equation*}
    \norm{\Phi  \begin{pmatrix}
      v_-\\0\\0
    \end{pmatrix}}_{\mathfrak{H}}=
    \norm{\begin{pmatrix}
      0\\\widehat{v}_-
    \end{pmatrix}}_{\mathfrak{H}},\qquad
    \norm{\Phi  \begin{pmatrix}
      0\\0\\v_+
    \end{pmatrix}}_{\mathfrak{H}}=
     \norm{\begin{pmatrix}
     \widehat{v}_+\\0
    \end{pmatrix}}_{\mathfrak{H}}.
\end{equation*}
Thus, taking into account that the spaces $\mathfrak{D}_-$ and
$\mathfrak{D}_+$ are orthogonal (see the discussion following the formula (\ref{eq:D-spaces})), one has
\begin{equation*}
  \norm{\Phi  \begin{pmatrix}
      v_-\\0\\v_+
    \end{pmatrix}}^2_{\mathfrak{H}}=
\norm{\begin{pmatrix}
      0\\\widehat{v}_-
    \end{pmatrix}}^2_{\mathfrak{H}}
+\norm{\begin{pmatrix}
     \widehat{v}_+\\0
    \end{pmatrix}}^2_{\mathfrak{H}}\,.
\end{equation*}
Finally note that
\begin{equation*}
  \norm{\begin{pmatrix}
      0\\\widehat{v}_-
    \end{pmatrix}}_{\mathfrak{H}}=\norm{\widehat{v}_-}_{L_2(\reals,\mathcal{E})}
  =\norm{v_-}_{L_2(\reals_-,\mathcal{E})},\qquad
\norm{\begin{pmatrix}
      \widehat{v}_+\\ 0
    \end{pmatrix}}_{\mathfrak{H}}=\norm{\widehat{v}_+}_{L_2(\reals,\mathcal{E})}
  =\norm{v_+}_{L_2(\reals_+,\mathcal{E})}\,.
\end{equation*}
The surjectivity of the mapping follows from the fact that the Fourier
transform is a unitary mapping between $L_2(\reals_\pm,\mathcal{E})$ and
$H^2_\pm(\mathcal{E}),$ by the Paley-Wiener theorem.
\end{proof}
\begin{lemma}
  \label{lem:isometry-phi}
The mapping $\Phi$, given in Lemma~\ref{lem:f-pm-phi} and extended
by linearity to
\begin{equation}
  \label{eq:dense-space-in-functional-model}
   L_2(\reals_-,\mathcal{E})\oplus   \Span_{z_\pm\in{\mathbb C}_\pm}\mathcal{G}(z_+,z_-)\oplus L_2(\reals_+,\mathcal{E})
\end{equation}
is an isometry from the set (\ref{eq:dense-space-in-functional-model}) to
$\mathfrak{H}$.
\end{lemma}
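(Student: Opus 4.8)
The plan is to split a generic element $\mathfrak{u}$ of \eqref{eq:dense-space-in-functional-model} as $\mathfrak{u}=(v_-,0,v_+)^\top+(0,v,0)^\top$ with $v=\sum_j v_j$ a finite sum, $v_j\in\mathcal{G}(z^{(j)}_+,z^{(j)}_-)$, and to exploit linearity of $\Phi$. Lemma~\ref{lem:isometry-Ds} already handles the first summand: $\Phi(v_-,0,v_+)^\top\in\mathfrak{D}_-\oplus\mathfrak{D}_+$ with $\norm{\Phi(v_-,0,v_+)^\top}_{\mathfrak{H}}^2=\norm{v_-}_{L_2(\reals_-,\mathcal{E})}^2+\norm{v_+}_{L_2(\reals_+,\mathcal{E})}^2$. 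It therefore suffices to establish \textbf{(a)} $\Phi(0,v',0)^\top\in K=\mathfrak{H}\ominus(\mathfrak{D}_-\oplus\mathfrak{D}_+)$ for every $v'\in\mathcal{G}(z'_+,z'_-)$, so that the two summands of $\Phi\mathfrak{u}$ become $\mathfrak{H}$-orthogonal and the cross term disappears; and \textbf{(b)} $\inner{\Phi(0,v,0)^\top}{\Phi(0,v',0)^\top}_{\mathfrak{H}}=\inner{v}{v'}_{\mathcal{H}}$ for $v\in\mathcal{G}(z_+,z_-)$ and $v'\in\mathcal{G}(z'_+,z'_-)$. Summing \textbf{(b)} over the finitely many terms and invoking \textbf{(a)} then gives $\norm{\Phi\mathfrak{u}}_{\mathfrak{H}}^2=\norm{v_-}^2+\norm{v}_{\mathcal{H}}^2+\norm{v_+}^2=\norm{\mathfrak{u}}_{\mathscr{H}}^2$.

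For \textbf{(a)}, note that $\Phi(0,v',0)^\top$ lies in $\mathfrak{H}$ because, by Lemma~\ref{lem:f-pm-phi} with $v_\pm=0$, its two entries are explicit $L_2(\reals,\mathcal{E})$-functions assembled from $(\cdot-z'_\pm)^{-1}$. Writing $\Phi(0,v',0)^\top=\binom{\widetilde g'}{g'}$, the same lemma yields $\widetilde g'+S^{*}g'=\mathscr{F}_+(0,v',0)^\top=-\pi^{-1/2}\Gamma_0(L-\cdot\,I)^{-1}v'$ and $S\widetilde g'+g'=\mathscr{F}_-(0,v',0)^\top=-\pi^{-1/2}\Gamma_0(L^*-\cdot\,I)^{-1}v'$, which by Lemma~\ref{lem:bounded-hardy} belong to $H^2_-(\mathcal{E})$ and $H^2_+(\mathcal{E})$ respectively; this is precisely the membership condition for $K$ recorded after \eqref{eq:definition-of-K}. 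Linearity then extends the conclusion to all of $\Span_{z'_\pm}\mathcal{G}(z'_+,z'_-)$.

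For \textbf{(b)}, I would expand the $\mathfrak{H}$-inner product via its weight $\bigl(\begin{smallmatrix}I&S^{*}\\S&I\end{smallmatrix}\bigr)$ and use Lemma~\ref{lem:f-pm-phi} on the left argument to get $\inner{\Phi(0,v,0)^\top}{\Phi(0,v',0)^\top}_{\mathfrak{H}}=\inner{\mathscr{F}_+(0,v,0)^\top}{\widetilde g'}_{L_2(\reals,\mathcal{E})}+\inner{\mathscr{F}_-(0,v,0)^\top}{g'}_{L_2(\reals,\mathcal{E})}$. Here $\mathscr{F}_+(0,v,0)^\top\in H^2_-(\mathcal{E})$ and $\mathscr{F}_-(0,v,0)^\top\in H^2_+(\mathcal{E})$ by Lemma~\ref{lem:bounded-hardy}, while each of $\widetilde g'$, $g'$ splits into a multiple of $(\cdot-z'_-)^{-1}\in H^2_+(\mathcal{E})$ and a multiple of $(\cdot-z'_+)^{-1}\in H^2_-(\mathcal{E})$. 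Since $H^2_+(\mathcal{E})\perp H^2_-(\mathcal{E})$ in $L_2(\reals,\mathcal{E})$, the mismatched pairings vanish, and Cauchy's integral formula for Hardy functions evaluates the surviving ones at $\cc{z'_+}\in\complex_-$ and $\cc{z'_-}\in\complex_+$, expressing the inner product through $\Gamma_0(L-\cc{z'_+}I)^{-1}v$ and $\Gamma_0(L^*-\cc{z'_-}I)^{-1}v$. Inserting the representation \eqref{eq:v-in-G} of $v$, rewriting $\Gamma_0(L-\cdot\,I)^{-1}$ and $\Gamma_0(L^*-\cdot\,I)^{-1}$ by \eqref{eq:Gamma-0-L-through-M-gamma}, and telescoping the resulting $M$-function expressions with \eqref{eq:difference-m-functions} (in particular $\gamma(z)^*\gamma(z)=(\Im z)^{-1}\Im M(z)$), one recovers exactly $\inner{v}{v'}_{\mathcal{H}}$.

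I expect the algebraic bookkeeping in this last step to be the main obstacle: there are four cross terms involving two independent pairs $z_\pm,z'_\pm$ of spectral parameters, and one must apply the resolvent identity for $M$ repeatedly with "mixed" arguments. It is cleanest to treat first the diagonal cases $v,v'\in\gamma(z_+)(M(z_+)+\I I)^{-1}\mathcal{E}$ and $v,v'\in\gamma(z_-)(M(z_-)-\I I)^{-1}\mathcal{E}$ — where Hardy-space orthogonality forces the surviving pairing to be the one that collapses, through $\Im M=(\Im z)\gamma^*\gamma$ and the self-adjointness of $\Lambda$, to $\norm{\cdot}_{\mathcal{H}}^2$ — and only then the genuinely mixed contributions; by contrast the analytic ingredients (Lemma~\ref{lem:bounded-hardy}, orthogonality of $H^2_\pm(\mathcal{E})$, and the residue calculus) are routine.
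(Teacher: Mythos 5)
Your proposal is correct and follows the same overall skeleton as the paper's proof (reduce to the middle component via Lemma~\ref{lem:isometry-Ds}, show that its image lies in $K$, then verify the isometry there using Lemma~\ref{lem:f-pm-phi}, Hardy-space orthogonality and Cauchy's formula), but two steps are handled differently. For membership in $K$ you invoke the characterisation of $K$ recorded after \eqref{eq:definition-of-K} together with Lemma~\ref{lem:bounded-hardy}, whereas the paper verifies directly that the two rational vector functions entering $\Phi(0,v,0)^\top$ are $\mathfrak{H}$-orthogonal to $\mathfrak{D}_-\oplus\mathfrak{D}_+$; your route is shorter and requires no new computation. Second, you polarise: you compute $\inner{\Phi(0,v,0)^\top}{\Phi(0,v',0)^\top}_{\mathfrak{H}}$ for $v,v'$ attached to independent parameter pairs, which is what linearity over $\Span_{z_\pm\in{\mathbb C}_\pm}\mathcal{G}(z_+,z_-)$ actually demands; the paper only writes out the norm identity \eqref{eq:isometry-phi-in-K} for a single $\mathcal{G}(z_+,z_-)$ and leaves the cross terms implicit, so your version is the more complete one. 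One simplification for your final step: after Cauchy's formula produces $\Gamma_0(L-\cc{z'_+}I)^{-1}v$ and $\Gamma_0(L^*-\cc{z'_-}I)^{-1}v$ paired against $w'_+$ and $w'_-$, there is no need to insert \eqref{eq:v-in-G} for $v$ and telescope $M$-function differences: rewriting these traces by \eqref{eq:Gamma-0-L-through-M-gamma}, taking adjoints and using $M(\cc{z})^{*}=M(z)$ moves everything onto the second argument of the inner product, and \eqref{eq:v-in-G} applied to $v'$ then yields $\inner{v}{v'}_{\mathcal{H}}$ in one line, with no mixed resolvent identities at all; this is exactly how the paper closes the diagonal case, and it polarises verbatim.
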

\begin{proof}
  Due to \eqref{eq:definition-of-K} and Lemma~\ref{lem:isometry-Ds},
  the assertion will be proved if one shows first that
\begin{equation}\label{eq:phi-leaves-K}
  \Phi
  \begin{pmatrix}
    0\\[0.3em] \Span_{z_\pm\in{\mathbb C}_\pm}\mathcal{G}(z_+,z_-)\\[0.8em]
  0
  \end{pmatrix}
\subset K
\end{equation}
and, second, that for all $z_\pm\in\complex_\pm$ and $v\in\mathcal{G}(z_+,z_-)$ 
one has
\begin{equation}
\label{eq:isometry-phi-in-K}
  \norm{\Phi
    \begin{pmatrix}
      0\\v\\0
    \end{pmatrix}
}_{\mathfrak{H}}=\norm{v}_{\mathcal{H}}\,.
\end{equation}
In view of the definition of $\Phi,$ see Lemma \ref{lem:f-pm-phi}, to establish \eqref{eq:phi-leaves-K} it suffices to verify that, for $z_\pm\in{\mathbb C}_\pm$
and $w_+, w_-\in{\mathcal E}$ chosen as in \eqref{eq:v-in-G}, the vectors
\begin{equation*}
  \frac{1}{\cdot-z_-}
  \begin{pmatrix}
    S^*(\cc{z}_-)w_-\\[0.3em]-w_-
  \end{pmatrix},\qquad
\frac{1}{\cdot-z_+}
\begin{pmatrix}
  -w_+\\[0.3em] S(z_+)w_+
\end{pmatrix}
\end{equation*}
are orthogonal to $\mathfrak{D}_-\oplus\mathfrak{D}_+$. To this end, consider $h_\pm\in H_2^\pm(\mathcal{E})$. Taking into account the fact that
\begin{equation}
 \label{eq:fraction-hardy+}
 (\cdot-z_-)^{-1}w_-\in H_2^+(\mathcal{E}),
\end{equation}
we obtain
\begin{align}
\inner{\frac{1}{\cdot-z_-}
  \begin{pmatrix}
    S^*(\cc{z}_-)w_-\\ -w_-
  \end{pmatrix}}
{
  \begin{pmatrix}
    h_+\\ h_-
  \end{pmatrix}
}_\mathfrak{H}&=\inner{\frac{1}{\cdot-z_-}S^*(\cc{z}_-)w_-}{h_++S^*h_-}_{L_2(\mathcal{E})}
    -\inner{\frac{1}{\cdot-z_-}w_-}{Sh_++h_-}_{L_2(\mathcal{E})}\nonumber\\[0.6em]
&=\inner{\frac{1}{\cdot-z_-}S^*(\cc{z}_-)w_-}{h_+}_{L_2(\mathcal{E})}
-\inner{\frac{1}{\cdot-z_-}w_-}{Sh_+}_{L_2(\mathcal{E})}\nonumber
\\[0.6em]
&
=-\inner{\frac{S^*(\cdot)-S^*(\cc{z}_-)}{\cdot-z_-}w_-}{h_+}_{\!\! L_2(\mathcal{E})}.\label{vanishexpr}
\end{align}
Now analytically continuing the function $S^*$ to the lower half-plane and using the fact that 
\[
\frac{S^*(\cdot)-S^*(\cc{z}_-)}{\cdot-z_-}w_-\in H^-_2(\mathcal{E}),
\]
we conclude that the expression (\ref{vanishexpr}) vanishes, as required.

In the same way, since
\begin{equation}
 \label{eq:fraction-hard-}
 (\cdot-z_+)^{-1}w_+\in H_2^-(\mathcal{E}),
\end{equation}
we conclude that
\begin{equation*}
  \inner{\frac{1}{\cdot-z_+}
  \begin{pmatrix}
    -w_+\\ S(z_+)w_+
  \end{pmatrix}}
{
  \begin{pmatrix}
    h_+\\ h_-
  \end{pmatrix}
}_\mathfrak{H}=-\inner{\frac{S(\cdot)-S(z_+)}{\cdot-z_+}w_+}{h_-}_{L_2(\mathcal{E})}=0.
\end{equation*}

It remains to prove (\ref{eq:isometry-phi-in-K}). In view of
Lemma~\ref{lem:f-pm-phi} and Definition~\ref{def:F-mappings}, one has
\begin{align*}
    \norm{\Phi
    \begin{pmatrix}
      0\\v\\0
    \end{pmatrix}
}_{\mathfrak{H}}^2&=\inner{\begin{pmatrix}
    I & S^* \\
    S & I
  \end{pmatrix}
  \Phi\begin{pmatrix}
      0\\ v\\ 0
    \end{pmatrix}}{\Phi\begin{pmatrix}
      0\\v\\0
    \end{pmatrix}}_{
    L_2(\mathcal{E})\oplus L_2(\mathcal{E})}
=\left\langle\begin{pmatrix}
      \mathscr{F}_+
      \begin{pmatrix}
      0\\
      v\\
      0
    \end{pmatrix}\\[5mm]
    \mathscr{F}_-
      \begin{pmatrix}
      0\\
      v\\
      0
    \end{pmatrix}
    \end{pmatrix}, \Phi\begin{pmatrix}
      0\\v\\0
    \end{pmatrix}\right\rangle
    \\[1mm]
&
=
\inner{\begin{pmatrix}
      -\dfrac{1}{\sqrt{\pi}}\Gamma_0(L-(\cdot-\I0)I)^{-1}v\\[5mm]
    -\dfrac{1}{\sqrt{\pi}}\Gamma_0(L^*-(\cdot+\I0)I)^{-1}v
\end{pmatrix}
}{\begin{pmatrix}
      \dfrac{\I}{\sqrt{2\pi}}\left[
        \dfrac{1}{\cdot-z_-}S^*(\cc{z}_-)w_--\dfrac{1}{\cdot-z_+}w_+\right]\\[5mm]
       -\dfrac{\I}{\sqrt{2\pi}}\left[
        \dfrac{1}{\cdot-z_-}w_--\dfrac{1}{\cdot-z_+}S(z_+)w_+\right]
    \end{pmatrix}}.
\end{align*}
By Lemma~\ref{lem:bounded-hardy}, one has $\Gamma_0(L-\cdot)^{-1}v\in H_2^-(\mathcal{E})$ and $\Gamma_0(L^*-\cdot)^{-1}v\in H_2^+(\mathcal{E})$. Thus, in view of (\ref{eq:fraction-hardy+})
  and (\ref{eq:fraction-hard-}), one obtains, using the Cauchy formula for Hardy classes, that
  \begin{align}
   \norm{\Phi
    \begin{pmatrix}
      0\\v\\0
    \end{pmatrix}
}_{\mathfrak{H}}^2&=
\left\langle\begin{pmatrix}
      -\dfrac{1}{\sqrt{\pi}}\Gamma_0(L-(\cdot-\I0)I)^{-1}v\\[5mm]
    -\dfrac{1}{\sqrt{\pi}}\Gamma_0(L^*-(\cdot+\I0)I)^{-1}v
\end{pmatrix},
\begin{pmatrix}
      -\dfrac{\I}{\sqrt{2\pi}}\dfrac{1}{\cdot-z_+}w_+\\[5mm]
       -\dfrac{\I}{\sqrt{2\pi}}
        \dfrac{1}{\cdot-z_-}w_-
    \end{pmatrix}\right\rangle_{\hspace{-1mm}L_2(\mathcal{E})\oplus
    L_2(\mathcal{E})}
   \nonumber\\[0.4em]
    &=-\sqrt{2}\bigl(\bigl\langle\Gamma_{0}(L-\cc{z}_{+}I)v, w_{+}\bigr\rangle+\bigl\langle\Gamma_{0}(L^{*}-\cc{z}_{-}I)v, w_{-}\bigr\rangle
      \bigr)
      \nonumber\\[0.6em]
   &
    =\sqrt{2}\bigl\langle v, \gamma(z_+)(M(z_+)+\I I)w_{+}+\gamma(z_-)(M(z_-)-\I
      I)w_{-}\bigr\rangle,\label{last_inner}
  \end{align}
  where for obtaining the last equality we use
  (\ref{eq:Gamma-0-L-through-M-gamma}). Due to (\ref{eq:v-in-G}), the formula (\ref{last_inner}) immediately implies
  \eqref{eq:isometry-phi-in-K},
  as required.
\end{proof}
Due to Lemma~\ref{lem:set-resolvent-dense-whole-space} and
Lemma~\ref{lem:isometry-phi}, the mapping $\Phi$ can be extended by continuity to
the whole space $\mathscr{H}$, provided that the operator $\widetilde{A}$ is simple. We will use same notation $\Phi$ for this extension.
\begin{lemma}
  For all $z\in{\mathbb C}\setminus{\mathbb R},$ one has
    $\Phi(\mathscr{A}-zI)^{-1}=(\cdot-z)^{-1}\Phi.$
\end{lemma}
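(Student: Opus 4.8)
The plan is to prove the corresponding intertwining relation for the maps $\mathscr{F}_\pm$ first and then transfer it to $\Phi$. To pass from $\mathscr{F}_\pm$ to $\Phi$, recall from Lemma~\ref{lem:f-pm-phi} that $G\Phi=\binom{\mathscr{F}_+}{\mathscr{F}_-}$ with $G:=\begin{pmatrix}I&S^*\\ S&I\end{pmatrix}$; since $G$ acts by the multiplication operator $S$, it commutes with multiplication by the scalar bounded function $(\mu-z)^{-1}$, and it is injective on $\mathfrak{H}$ (if $G\xi=0$ then $\inner{\xi}{\eta}_{\mathfrak{H}}=\inner{G\xi}{\eta}_{L^2(\mathcal{E})\oplus L^2(\mathcal{E})}=0$ for every two-component pair $\eta$, and such pairs are dense in $\mathfrak{H}$). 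Hence it is enough to establish $\mathscr{F}_\pm(\mathscr{A}-zI)^{-1}=(\cdot-z)^{-1}\mathscr{F}_\pm$ for $z\in\complex\setminus\reals$. Both sides here are bounded operators $\mathscr{H}\to L^2(\reals,\mathcal{E})$ (boundedness of $\mathscr{F}_\pm$ follows from Lemma~\ref{lem:bounded-hardy} and the contractivity of $S$), so I would simply verify the identity on an arbitrary vector of $\mathscr{H}$, with no density argument needed; by the $\pm$-symmetry of the construction it suffices to treat $z\in\complex_-$ using \eqref{eq:vector-in-domain} and $\mathscr{F}_+$, the case $z\in\complex_+$ being identical with \eqref{eq:other-vector-in-domain} and $\mathscr{F}_-$.

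Fixing $z\in\complex_-$ and $(v_-,v,v_+)^\top\in\mathscr{H}$, I would write $(f_-,f,f_+)^\top:=(\mathscr{A}-zI)^{-1}(v_-,v,v_+)^\top$ as in \eqref{eq:vector-in-domain} (with $h_\pm=v_\pm$, $h=v$) and compute $\mathscr{F}_+(f_-,f,f_+)^\top=-\tfrac{1}{\sqrt\pi}\Gamma_0\bigl(L-(\cdot-\I0)I\bigr)^{-1}f+(S^*\widehat{f}_-+\widehat{f}_+)(\cdot)$ term by term. The ingredients are: (i) the Fourier images of the half-line summands, $\widehat{(\partial_+^0-zI)^{-1}v_+}=(\cdot-z)^{-1}\widehat{v}_+$, $\widehat{{\rm e}^{-\I z\cdot}\mathbf{e}}=\I(2\pi)^{-1/2}(\cdot-z)^{-1}\mathbf{e}$ on $\reals_+$, and $\widehat{(\partial_--zI)^{-1}v_-}=(\cdot-z)^{-1}\bigl(\widehat{v}_--\widehat{v}_-(z)\bigr)$ on $\reals_-$, the last of which, together with $\widehat{f}_-\in H_2^-(\mathcal{E})$, forces the relation $f_-(0)=-\I\sqrt{2\pi}\,\widehat{v}_-(z)$; (ii) the resolvent identity for $L$, giving $\Gamma_0\bigl(L-(\cdot-\I0)I\bigr)^{-1}(L-zI)^{-1}v=(\cdot-z)^{-1}\bigl[\Gamma_0(L-(\cdot-\I0)I)^{-1}v-\Gamma_0(L-zI)^{-1}v\bigr]$; and (iii) the identity of \eqref{eq:auxiliar-difference-m}--\eqref{eq:already-done}, which, taken at a real boundary point and at $z_-=z$, expresses $\Gamma_0\bigl(L-(\cdot-\I0)I\bigr)^{-1}\gamma(z)(M(z)-\I I)^{-1}$ as a constant multiple of $(\cdot-z)^{-1}\bigl[S^*(\cdot)-S^*(\cc z)\bigr]$.

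Substituting (i)--(iii), every summand of $\mathscr{F}_+(f_-,f,f_+)^\top$ acquires the common factor $(\cdot-z)^{-1}$. Collecting terms, the part reproducing $-\tfrac{1}{\sqrt\pi}\Gamma_0(L-(\cdot-\I0)I)^{-1}v+S^*(\cdot)\widehat{v}_-(\cdot)+\widehat{v}_+(\cdot)$ gives exactly $(\cdot-z)^{-1}\mathscr{F}_+(v_-,v,v_+)^\top$, while the remaining contributions — those proportional to $\Gamma_0(L-zI)^{-1}v$, those proportional to $S^*(\cc z)f_-(0)$, and the pair involving $S^*(\cdot)f_-(0)$ and $S^*(\cdot)\widehat{v}_-(z)$ — cancel identically, the last cancellation being precisely the relation $f_-(0)=-\I\sqrt{2\pi}\,\widehat{v}_-(z)$ from (i). This yields $\mathscr{F}_+(\mathscr{A}-zI)^{-1}=(\cdot-z)^{-1}\mathscr{F}_+$ on $\complex_-$, and the analogous computation based on \eqref{eq:other-vector-in-domain} gives the corresponding identity for $\mathscr{F}_-$ on $\complex_+$; combined with the reduction above, this proves the lemma.

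I expect the main obstacle to be purely computational bookkeeping: carrying the rank-one ``boundary-correction'' contributions — the ${\rm e}^{-\I z\cdot}(\cdots)$ summand of $f_+$ and the $\gamma(z)(M(z)-\I I)^{-1}f_-(0)$ summand of $f$ — through the Fourier transform and verifying that they recombine to zero, with every factor of $\I$ and every sign correctly placed. A secondary subtlety is the passage to real boundary values in $\Gamma_0\bigl(L-(\cdot-\I0)I\bigr)^{-1}$ when invoking the resolvent identity for $L$; this is legitimised exactly by the $H_2^-(\mathcal{E})$-regularity established in Lemma~\ref{lem:bounded-hardy}.
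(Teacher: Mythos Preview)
Your proposal is correct and follows essentially the same route as the paper: establish the intertwining relation for $\mathscr{F}_\pm$ by direct computation on an arbitrary vector using the resolvent formulae \eqref{eq:vector-in-domain}--\eqref{eq:other-vector-in-domain}, the Fourier identities of type \eqref{eq:used-in-last-term}, the first resolvent identity for $L$ (resp.\ $L^*$), and the $M$-function calculus \eqref{eq:difference-m-functions}--\eqref{eq:Gamma-0-L-through-M-gamma}, then pass to $\Phi$ via Lemma~\ref{lem:f-pm-phi}. The only cosmetic differences are that the paper chooses the representative case $z\in\complex_+$ with $\mathscr{F}_-$ (you chose the symmetric $z\in\complex_-$ with $\mathscr{F}_+$), and that your reduction step is made slightly more explicit through the injectivity of the Gram map $G$ on $\mathfrak{H}$, whereas the paper simply invokes Lemma~\ref{lem:f-pm-phi}.
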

\begin{proof}
  We prove the statement  for $z\in\complex_{+},$ as the case $z\in\complex_{-}$ is established in a similar way.  
  
  Consider an arbitrary
  $(h_-,h,h_+)^\top\in\mathscr{H},$ and let $(f_-,f,f_+)^\top$ be the vector
  defined by \eqref{eq:other-vector-in-domain}.
  It follows from (\ref{eq:h-pm-f-pm}) that
  \begin{equation}
    \label{eq:used-in-last-term}
    (\cdot-z)\widehat{f}_\pm(\cdot)=
    \widehat{h}_\pm(\cdot)\pm\frac{\I}{\sqrt{2\pi}}f_\pm(0)\,.
  \end{equation}
  Recall that $\widehat{h}_\pm$ and $\widehat{f}_\pm$ are the Fourier
  transforms of $h_\pm$ and $f_\pm$, respectively. According to
  Definition~\ref{def:F-mappings} and
  (\ref{eq:other-vector-in-domain}), one has
\begin{align*}
  \mathscr{F}_-\begin{pmatrix}
    f_-\\ f\\ f_+
  \end{pmatrix}&=
 -\dfrac{1}{\sqrt{\pi}}\Gamma_0(L^{*}-(\cdot+\I0)I)^{-1}f+ \widehat{f}_-(\cdot)+\bigl(S\widehat{f}_+\bigr)(\cdot) \nonumber\\[0.4em]
&=-\dfrac{1}{\sqrt{\pi}}\Gamma_0(L^{*}-(\cdot+\I0)I)^{-1}\left[(L^{*} -z I)^{-1}h
    +\sqrt{2}\gamma(z)(M(z)+\I I)^{-1}f_+(0)\right]\nonumber\\[0.5em]
&+\dfrac{1}{\cdot-z}\left[\bigl(\widehat{h}_-+S\widehat{h}_+\bigr)(\cdot)
 +\dfrac{\I}{\sqrt{2\pi}}\bigl(Sf_+(0)-f_-(0)\bigr)\right]
\end{align*}
where to obtain the expression in the second square brackets
we invoke \eqref{eq:used-in-last-term}. Thus, using the resolvent identity and (\ref{eq:def-characteristic-of-L}),
\begin{equation}
\begin{aligned}
  \mathscr{F}_-\begin{pmatrix}
    f_-\\ f\\ f_+
  \end{pmatrix}&=\frac{1}{\cdot-z}
\mathscr{F}_-\begin{pmatrix}
    h_-\\ h\\ h_+
  \end{pmatrix}
-\frac{1}{\sqrt{\pi}}\Gamma_0(L^{*}-(\cdot+\I
    0)I)^{-1}\gamma(z)\bigl(M(z)+\I I\bigr)^{-1}\sqrt{2}f_+(0)\nonumber\\[0.3em]
  & +
  \frac{1}{(\cdot-z)\sqrt{\pi}}
    \left(\Gamma_0(L^{*}-zI)^{-1}h +\frac{\I}{\sqrt{2}}\left[
    f_+(0)-f_-(0)-2\I\bigl(M(\cdot+\I 0\bigr)+\I I)^{-1}
    f_+(0)\right]\right).
\end{aligned}
      \label{eq:long-expression}
\end{equation}
Consider the third term on the right-hand side of (\ref{eq:long-expression}) evaluated at $\zeta\in\complex_{+}.$
Using the property ({\it cf.} (\ref{two_ast}))
\begin{equation*}
  f_{+}(0)-f_{-}(0)=\sqrt{2}\I\Gamma_{0}f,
\end{equation*}
we write it as follows:
\begin{align}
  \frac{1}{(\zeta-z)\sqrt{\pi}}&
    \left(\Gamma_0(L^{*}-zI)^{-1}h +\frac{\I}{\sqrt{2}}\left[
    f_+(0)-f_-(0)-2\I(M(\zeta)+\I I)^{-1}
    f_+(0)\right]\right)\nonumber\\[0.1em]
  &=\frac{1}{(\zeta-z)\sqrt{\pi}}
    \left(\Gamma_0(L^{*}-zI)^{-1}h-\Gamma_{0}f+
    \sqrt2(M(\zeta)+\I I)^{-1}f_+(0)\right)\nonumber\\[0.3em]
  &= \frac{\sqrt{2}}{(\zeta-z)\sqrt{\pi}}
    \left(-\Gamma_{0}\gamma(z)
    \bigl(M(z)+\I I\bigr)^{-1}f_+(0)+\bigl(M(\zeta)+\I I\bigr)^{-1}f_{+}(0)\right)\nonumber\\[0.4em]
  &=\frac{\sqrt{2}}{(\zeta-z)\sqrt{\pi}}\left(\bigl(M(\zeta)+\I I\bigr)^{-1}
    -\bigl(M(z)+\I I\bigr)^{-1}\right)f_{+}(0)\nonumber\\[0.4em]
  &=-\sqrt{\frac{2}{\pi}}\bigl(M(\zeta)+\I I\bigr)^{-1}
    \left(\frac{M(\zeta)-M(z)}{\zeta-z}\right)\bigl(M(z)+\I I\bigr)^{-1}f_{+}(0)
    \nonumber\\[0.4em]
  &
  =-\sqrt{\frac{2}{\pi}}\bigl(M(\zeta)+\I I\bigr)^{-1}
    \gamma(\cc{\zeta})^{*}\gamma(z)\bigl(M(z)+\I I\bigr)^{-1}f_{+}(0),\nonumber\\[0.4em]
  &=\sqrt{\frac{2}{\pi}}
  \Gamma_0(L^{*}-\zeta I)^{-1}\gamma(z)\bigl(M(z)+\I I\bigr)^{-1}f_+(0).\label{sixth}
\end{align}
where for the second equality $f$ is replaced by \eqref{eq:other-vector-in-domain}, while for the third and fourth equalities
we have used \eqref{eq:gamma-0-solution-operator} and the second resolvent identity, respectively. Furthermore, we utilise \eqref{eq:difference-m-functions} to obtain the fifth equality. The identities \eqref{eq:Gamma-0-L-through-M-gamma} now yield the final expression (\ref{sixth}).

It follows that the second and third terms on the right-hand side of \eqref{eq:long-expression} 
cancel each other as $\zeta$ approaches the real line. We have therefore shown that
\begin{equation}
  \mathscr{F}_{-}(\mathscr{A}-zI)^{-1}=(\cdot-z)^{-1}\mathscr{F}_{-},\quad z\in\complex_{+}.
  \label{F_minus}
\end{equation}
Similarly, one proves that
\begin{equation}
  \mathscr{F}_{+}(\mathscr{A}-zI)^{-1}=(\cdot-z)^{-1}\mathscr{F}_{+},\quad z\in\complex_{+}.
  \label{F_plus}
\end{equation}
Combining  (\ref{F_minus}), (\ref{F_plus}), and Lemma~\ref{lem:f-pm-phi} yields the claim.
\end{proof}
\begin{lemma}
  \label{lem:phi-on-to}
  The operator $\Phi$ maps $\mathscr{H}$ onto $\mathfrak{H}$ unitarily.
\end{lemma}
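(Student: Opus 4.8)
The plan is to build on what is already available: $\Phi$ has been shown to be an isometry defined on all of $\mathscr H$ (Lemma~\ref{lem:isometry-phi}, extended by continuity using the density supplied by Lemma~\ref{lem:set-resolvent-dense-whole-space}), so $\ran\Phi$ is a \emph{closed} subspace of $\mathfrak H$ and it remains only to prove surjectivity, $\ran\Phi=\mathfrak H$. I would deduce this from two facts: (i) $\ran\Phi$ is a closed reducing subspace for the self-adjoint operator $Q$ of multiplication by the independent variable in $\mathfrak H$, and (ii) the only closed reducing subspace of $Q$ that contains $\mathfrak D_-\oplus\mathfrak D_+$ is $\mathfrak H$ itself.

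For (i), the intertwining relation of the preceding lemma, $(\cdot-z)^{-1}\Phi=\Phi(\mathscr A-zI)^{-1}$ for $z\in\complex\setminus\reals$, shows at once that $\ran\Phi$ is invariant under $(Q-zI)^{-1}$ for every non-real $z$; since $Q$ is self-adjoint, a closed subspace invariant under its resolvent at a conjugate pair of points reduces $Q$. Thus $\ran\Phi$ reduces $Q$. In addition, Lemma~\ref{lem:isometry-Ds} gives $\mathfrak D_-\oplus\mathfrak D_+\subseteq\ran\Phi$.

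For (ii), let $\mathcal N$ be any closed reducing subspace of $Q$ with $\mathfrak D_-\oplus\mathfrak D_+\subseteq\mathcal N$. Then $\mathcal N$ is invariant under the unitary group ${\rm e}^{\I Q t}$, $t\in\reals$, of multiplications by ${\rm e}^{\I s t}$, so it contains ${\rm e}^{\I Q t}\mathfrak D_\pm$ for all $t$. A direct computation with the form \eqref{eq:inner-in-functional} shows that the $\mathfrak H$-norm, restricted to vectors of the form $\binom{\tilde g}{0}$ (respectively $\binom{0}{g}$), equals $\|\tilde g\|_{L^2(\reals,\mathcal E)}$ (respectively $\|g\|_{L^2(\reals,\mathcal E)}$); hence $\binom{L^2(\reals,\mathcal E)}{0}$ and $\binom{0}{L^2(\reals,\mathcal E)}$ are closed subspaces of $\mathfrak H$ on which the $\mathfrak H$- and $L^2$-topologies agree, and each of them reduces $Q$. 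Since $\bigcup_{t\in\reals}{\rm e}^{\I s t}H^2_\pm(\mathcal E)$ is dense in $L^2(\reals,\mathcal E)$ --- being an increasing union of translates of a Hardy subspace, by the Paley--Wiener theorem --- the $\mathfrak H$-closed linear span of $\{{\rm e}^{\I Q t}\mathfrak D_+:t\in\reals\}$ coincides with $\binom{L^2(\reals,\mathcal E)}{0}$, and that of $\{{\rm e}^{\I Q t}\mathfrak D_-:t\in\reals\}$ with $\binom{0}{L^2(\reals,\mathcal E)}$. Consequently $\mathcal N$ contains both of these subspaces, hence their sum, which is exactly the linear set of two-component vector functions with $L^2(\reals,\mathcal E)$-components and is therefore dense in $\mathfrak H$ by the very definition of $\mathfrak H$. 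As $\mathcal N$ is closed, $\mathcal N=\mathfrak H$. Combining (i) and (ii) yields $\ran\Phi=\mathfrak H$, and since $\Phi$ is isometric, it is unitary.

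I expect the delicate point to be (ii): the inner product of $\mathfrak H$ is in general degenerate and $\binom{L^2(\reals,\mathcal E)}{0}$ and $\binom{0}{L^2(\reals,\mathcal E)}$ are \emph{not} mutually orthogonal in $\mathfrak H$, so their being reducing for $Q$ has to be established through invariance under the resolvents $(Q-zI)^{-1}$ for all non-real $z$ rather than via a naive orthogonal projection, and one must check that the $\mathfrak H$-closure of a subset of $\binom{L^2(\reals,\mathcal E)}{0}$ indeed reduces to its $L^2$-closure. The remaining ingredients --- the reduction argument in (i) and the inclusion $\mathfrak D_-\oplus\mathfrak D_+\subseteq\ran\Phi$ --- follow directly from the preceding lemmata; throughout, the standing assumption that $\widetilde A$ is simple is used, as it is precisely what makes $\Phi$ well defined and isometric on the whole of $\mathscr H$.
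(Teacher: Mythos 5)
Your proof is correct, but it follows a genuinely different route from the paper's. The paper argues by duality: it takes $\mathfrak g\in\mathfrak H$ orthogonal to $\ran\Phi$, uses Lemma~\ref{lem:isometry-Ds} to place $\mathfrak g$ in $K$, and then pairs $\mathfrak g$ against $\Phi(0,v,0)^\top$ for $v$ ranging over $\Span_{z_\pm}\mathcal G(z_+,z_-)$, invoking the density statement of Lemma~\ref{lem:set-resolvent-dense-whole-space} (where simplicity of $\widetilde A$ enters directly) to conclude $\Phi^*\mathfrak g=0$ and hence $\mathfrak g=0$. You instead run a Lax--Phillips-type generation argument: $\ran\Phi$ is closed because $\Phi$ is an isometry with complete domain, it reduces the model multiplication operator $Q$ by the intertwining relation of the preceding lemma, it contains the incoming and outgoing subspaces $\mathfrak D_\pm$ by Lemma~\ref{lem:isometry-Ds}, and the translation group ${\rm e}^{\I Qt}$ spreads $\mathfrak D_\pm$ out to $\binom{L^2(\reals,\mathcal E)}{0}$ and $\binom{0}{L^2(\reals,\mathcal E)}$, whose algebraic sum is dense in $\mathfrak H$ by the very construction of $\mathfrak H$ as a completion. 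Your handling of the degenerate metric --- observing that the $\mathfrak H$- and $L^2$-norms coincide on each single-component subspace, so that $L^2$-density of $\bigcup_t{\rm e}^{\I st}H^2_\pm(\mathcal E)$ transfers to $\mathfrak H$ --- is exactly the point that needs care, and you address it correctly. What each approach buys: the paper's is shorter and leans on the already-established density of $\Span\mathcal G(z_+,z_-)$ in $\mathcal H$; yours is more structural, makes the r\^ole of $\mathfrak D_\pm$ as generating subspaces explicit, and uses simplicity of $\widetilde A$ only through the fact that $\Phi$ is everywhere defined and isometric. One small item you should make explicit: that multiplication by $(\cdot-z)^{-1}$ in $\mathfrak H$ really is the resolvent of a self-adjoint operator $Q$ (this holds because scalar multiplications commute with the matrix weight, so boundedness, the adjoint relation $\bigl((\cdot-z)^{-1}\bigr)^*=(\cdot-\bar z)^{-1}$ and injectivity all survive the passage to the completion), since your reduction argument in step (i) rests on it.
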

\begin{proof}
  In view of Lemma~\ref{lem:isometry-phi}, the mapping $\Phi$ is an isometry
  defined in the whole space $\mathscr{H}$. It thus suffices
  to show that the range of $\Phi$ is dense in $\mathfrak{H}$. To this end,
  suppose $\mathfrak{g}\in\mathfrak{H}$ is such that
  \begin{equation}
    \label{eq:orthogonal-complement-of-dense-set}
    \inner{\mathfrak{g}}{\Phi
      \begin{pmatrix}
        v_{-}\\ v\\ v_{+}
      \end{pmatrix}}_{\mathfrak{H}}=0\qquad\forall\begin{pmatrix}
        v_{-}\\ v\\ v_{+}
      \end{pmatrix}\in 
       \begin{pmatrix}
        L_2(\reals_-,\mathcal{E})\\[0.5em]
          \Span_{z_\pm\in{\mathbb C}_\pm}\mathcal{G}(z_+,z_-)\\[0.7em]
            L_2(\reals_+,\mathcal{E})
 \end{pmatrix}.
     \end{equation}
    By Lemma~\ref{lem:isometry-Ds} and the definition of the subspace $K,$ see \eqref{eq:definition-of-K},
    this is equivalent to the existence of a nonzero
    $\mathfrak{g}\in K$ such that
    \eqref{eq:orthogonal-complement-of-dense-set} holds with
    $v_{-}=0,$ $v_{+}=0.$ On the other hand,  since $\Phi^{*}\mathfrak{g}\in\mathcal{H},$ one has
    \begin{equation*}
      0=\inner{\mathfrak{g}}{\Phi
      \begin{pmatrix}
        0\\ v\\ 0
      \end{pmatrix}}_{\mathfrak{H}}
      =\inner{\Phi^{*}\mathfrak{g}}{v}_{\mathcal{H}}\qquad \forall v\in  \Span_{z_\pm\in{\mathbb C}_\pm}\mathcal{G}(z_+,z_-),
    \end{equation*}
  which by Lemma~\ref{lem:set-resolvent-dense-whole-space} yields $\Phi^{*}\mathfrak{g}=0,$ and hence $\mathfrak{g}=0.$
\end{proof}

Combining the above lemmata, we obtain the following result, concerning the representation of the dilation ${\mathscr A}$ as the operator of multiplication in the two-component model space $\mathfrak{H}.$

\begin{theorem}
Under the above definitions of ${\mathscr A}$ and $\Phi,$ one has
 \begin{align*}
    (L-zI)^{-1}=\Phi^*P_K(\cdot-z)^{-1}\bigr\vert_K\Phi,\qquad z\in{\mathbb C}_-,\\[0.4em]
   (\mathscr{A}-zI)^{-1}=\Phi^*(\cdot-z)^{-1}\Phi,\qquad z\in{\mathbb C}\setminus{\mathbb R},
  \end{align*}
  where $\Phi$ is unitary from ${\mathscr H}$ to ${\mathfrak H}.$

\end{theorem}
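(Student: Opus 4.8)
The plan is to assemble the theorem from the work already done in Lemmata \ref{lem:f-pm-phi}--\ref{lem:phi-on-to}. The second identity is essentially already in hand: Lemma \ref{lem:phi-on-to} gives that $\Phi$ is unitary from $\mathscr{H}$ onto $\mathfrak{H}$, and the intertwining lemma preceding Lemma \ref{lem:phi-on-to} gives $\Phi(\mathscr{A}-zI)^{-1}=(\cdot-z)^{-1}\Phi$ for all $z\in\complex\setminus\reals$. Multiplying on the left by $\Phi^{*}$ and using $\Phi^{*}\Phi=I_{\mathscr{H}}$ yields $(\mathscr{A}-zI)^{-1}=\Phi^{*}(\cdot-z)^{-1}\Phi$, which is the claimed formula (here $(\cdot-z)^{-1}$ denotes multiplication by the scalar function $s\mapsto(s-z)^{-1}$ on $\mathfrak{H}$, which is bounded since $z\notin\reals$).

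For the first identity, I would start from the defining property \eqref{eq:extension-is-dilation} of the dilation, namely $(L-zI)^{-1}=P_{\mathcal{H}}(\mathscr{A}-zI)^{-1}\vert_{\mathcal{H}}$ for $z\in\complex_{-}$, together with the expression for $(\mathscr{A}-zI)^{-1}$ just obtained. The point is then to identify, under $\Phi$, the compression $P_{\mathcal{H}}\,\cdot\,\vert_{\mathcal{H}}$ in $\mathscr{H}$ with the compression $P_{K}\,\cdot\,\vert_{K}$ in $\mathfrak{H}$; that is, to show $\Phi P_{\mathcal{H}}\Phi^{*}=P_{K}$. This follows from Lemma \ref{lem:isometry-Ds}: $\Phi$ maps $L_{2}(\reals_{-},\mathcal{E})\oplus\{0\}\oplus L_{2}(\reals_{+},\mathcal{E})$ unitarily onto $\mathfrak{D}_{-}\oplus\mathfrak{D}_{+}$, and since $\Phi$ is unitary on all of $\mathscr{H}$ it therefore maps the orthogonal complement $\{0\}\oplus\mathcal{H}\oplus\{0\}$ (which we identify with $\mathcal{H}$) onto $\mathfrak{H}\ominus(\mathfrak{D}_{-}\oplus\mathfrak{D}_{+})=K$, see \eqref{eq:definition-of-K}. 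Consequently $\Phi$ intertwines the two orthogonal projections, $\Phi P_{\mathcal{H}}=P_{K}\Phi$.

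Putting these together, for $z\in\complex_{-}$ one computes
\begin{equation*}
  (L-zI)^{-1}=P_{\mathcal{H}}(\mathscr{A}-zI)^{-1}\bigr\vert_{\mathcal{H}}
  =P_{\mathcal{H}}\Phi^{*}(\cdot-z)^{-1}\Phi\bigr\vert_{\mathcal{H}}
  =\Phi^{*}P_{K}(\cdot-z)^{-1}\Phi\bigr\vert_{\mathcal{H}}
  =\Phi^{*}P_{K}(\cdot-z)^{-1}\bigr\vert_{K}\Phi,
\end{equation*}
where in the last two steps we used $\Phi P_{\mathcal{H}}=P_{K}\Phi$ (equivalently $P_{\mathcal{H}}\Phi^{*}=\Phi^{*}P_{K}$) and the fact that $\Phi$ maps $\mathcal{H}$ onto $K$, so that restricting the domain to $\mathcal{H}$ on the left corresponds to restricting $(\cdot-z)^{-1}$ to $K$ after applying $\Phi$. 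This is exactly the asserted formula. I do not anticipate a genuine obstacle here; the only mild subtlety worth spelling out is the bookkeeping of the identification $\{0\}\oplus\mathcal{H}\oplus\{0\}\cong\mathcal{H}$ (already adopted in the text following the definition of $\mathcal{G}(z_{+},z_{-})$) and the verification that $P_{K}(\cdot-z)^{-1}$ does indeed leave $K$ invariant so that the compression $P_{K}(\cdot-z)^{-1}\vert_{K}$ is well defined as an operator on $K$ — but this is immediate since $(\cdot-z)^{-1}$ is a bounded operator on $\mathfrak{H}$ and $P_{K}$ is the orthogonal projection onto $K$.
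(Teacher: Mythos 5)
Your proposal is correct and follows essentially the same route as the paper, which simply states the theorem as a consequence of "combining the above lemmata": the second identity is the intertwining relation plus unitarity of $\Phi$, and the first follows from the dilation property \eqref{eq:extension-is-dilation} together with the observation that $\Phi$, being unitary and mapping $L_2(\reals_-,\mathcal{E})\oplus\{0\}\oplus L_2(\reals_+,\mathcal{E})$ onto $\mathfrak{D}_-\oplus\mathfrak{D}_+$, must map $\mathcal{H}$ onto $K$ and hence intertwines $P_{\mathcal{H}}$ with $P_K$. Your assembly of the lemmata is exactly the argument the authors leave implicit.
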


\section{Boundary traces of the resolvents of BVP}
\label{resolvent_section}

Our aim here is to derive an explicit formula for the solution
operator of the spectral boundary-value problem
\eqref{eq:general-spectral-bv-problem}.
To this end, consider the operator (see (\ref{Krein_formula1}), (\ref{Krein_formula}),
{\it cf.} \cite[Section 5]{Ryzh_spec})
\begin{equation*}
-\bigl(\overline{\alpha+\beta M(z)}\bigr)^{-1}\beta,
\end{equation*}
for all $z$ such that $0\in\rho(\alpha+\beta M(z))$. It is
convenient to assume that $\beta$ is boundedly invertible, which we do
henceforth. Recall, that above (see Section \ref{sec:triples-bvp}) we have also required that $\beta$ is bounded, and $\alpha$ is such that $\dom(\alpha)\supset \dom (\Lambda)$ and
$\alpha +\beta \Lambda $ is closable.

We note that $\widetilde{M}(z):=M(z)-\Lambda$ is bounded and 
\begin{equation}
\bigl(\overline{\alpha+\beta M(z)}\bigr)^{-1}\beta=\bigl(\overline{\alpha+\beta\Lambda}+\beta\widetilde{M}(z)\bigr)^{-1}\beta=\bigl(\beta^{-1}(\overline{\alpha+\beta\Lambda})+\widetilde{M}(z)\bigr)^{-1}.
\label{aux_closure}
\end{equation}
Furthermore, one has $\dom(\Lambda)\subset\dom(\alpha+\beta\Lambda),$ and
\[
\beta^{-1}\alpha+\Lambda\subset\beta^{-1}(\overline{\alpha+\beta\Lambda}).
\]
In addition, $\beta^{-1}(\overline{\alpha+\beta\Lambda})$ is closed, as a consequence of the general fact that whenever $T_1$ is bounded with a bounded inverse and $T_2$ is closed, the operator $T_1T_2$ is closed. Therefore, $\beta^{-1}\alpha+\Lambda$ is closable and
\begin{equation}
\overline{\beta^{-1}\alpha+\Lambda}=\beta^{-1}(\overline{\alpha+\beta\Lambda}).
\label{closure_property}
\end{equation}
Combining (\ref{aux_closure}) and (\ref{closure_property}), we obtain
\begin{equation}
\bigl(\overline{\alpha+\beta M(z)}\bigr)^{-1}\beta=\bigl(\overline{\beta^{-1}\alpha+\Lambda}+\widetilde{M}(z)\bigr)^{-1}=\bigl(\overline{B+M(z)}\bigr)^{-1},\quad B:=\beta^{-1}\alpha,
\label{B_star}
\end{equation}
and \cite[Theorem 5.1]{Ryzh_spec} implies that
\begin{equation}
\label{inclusion}
{\mathcal Q}_{B}:=\bigl\{z: 0\in\rho(\overline{B+M(z)})\bigr\}\subset\rho(A_{\alpha\beta}).
\end{equation}
For convenience, henceforth we use the notation
$Q_B(z):=-\bigl(\overline{B+M(z)}\bigr)^{-1},$ $z\in {\mathcal Q}_B.$

Notice that \cite[Theorem 5.1]{Ryzh_spec} requires ${\mathcal Q}_B\neq\emptyset,$
which cannot be guaranteed in the most general setup.
In the present article we focus on the PDE setting, where the standard choice of boundary conditions implies that $\Lambda$ is the Dirichlet-to-Neumann map \cite{Ryzh_spec}.
This allows us to make some reasonable assumptions that are bound to hold provided the boundary of the spatial domain in the BVP is smooth, so that  \cite[Theorem 5.1]{Ryzh_spec} is applicable and
the resulting operator $A_{\alpha\beta}$ has discrete spectrum in ${\mathbb C}_-\cup{\mathbb C}_+.$ In what follows, we utilise the standard notation $\mathfrak{S}_\infty$ the Banach algebra of compact operators \cite[Section 11]{MR1192782} on the boundary space $\mathcal{E}.$ 

\begin{lemma}
\label{B_condition}
Suppose that $\Lambda$ is the Dirichlet-to-Neumann map of a BVP problem, such that it is a self-adjoint operator with purely discrete spectrum, accumulating to $-\infty.$\footnote{Any BVP for a second-order elliptic PDE in a domain with smooth boundary has these properties, as follows from a straightforward argument based on the Poincar\'{e}-Wirtinger inequality and the Lax-Milgram lemma.}
 Then $M(z)^{-1}\in\mathfrak{S}_\infty$ for all  $z\in{\mathbb C}\setminus{\mathbb R}.$
\end{lemma}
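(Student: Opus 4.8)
The plan is to factorise $M(z)$ through the resolvent of $\Lambda$, which under the stated hypothesis is compact, and then reduce the bounded invertibility of $M(z)$ to a Fredholm-alternative argument. By \eqref{M_operator} we write $M(z)=\Lambda+\widetilde{M}(z)$, with $\widetilde{M}(z):=M(z)-\Lambda$ bounded, as already noted above. The assumption that $\Lambda$ is self-adjoint with purely discrete spectrum accumulating only at $-\infty$ means that its eigenvalues are isolated, of finite multiplicity, and tend to $-\infty$; this is precisely the condition guaranteeing $(\Lambda-\lambda_{0}I)^{-1}\in\mathfrak{S}_{\infty}$ for every $\lambda_{0}\in\rho(\Lambda)$, and I would fix $\lambda_{0}=\I$. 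On the common domain $\dom(\Lambda)=\dom(M(z))$ one then has the factorisation $M(z)=(\Lambda-\lambda_{0}I)\bigl(I+T(z)\bigr)$ with $T(z):=(\Lambda-\lambda_{0}I)^{-1}\bigl(\widetilde{M}(z)+\lambda_{0}I\bigr)$, so that $T(z)\in\mathfrak{S}_{\infty}$ (product of a compact and a bounded operator) and $\ran(T(z))\subset\dom(\Lambda)$. If $I+T(z)$ is boundedly invertible, a short computation keeping track of domains yields $M(z)^{-1}=(I+T(z))^{-1}(\Lambda-\lambda_{0}I)^{-1}$, which then lies in $\mathfrak{S}_{\infty}$ as a product of a bounded and a compact operator; this completes the argument modulo the invertibility of $I+T(z)$.

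To establish that $I+T(z)$ is boundedly invertible for $z\in\complex\setminus\reals$, I would invoke the Fredholm alternative, legitimate since $T(z)$ is compact: it suffices to show $\ker(I+T(z))=\{0\}$. Any $\psi\in\ker(I+T(z))$ satisfies $\psi=-T(z)\psi\in\ran\bigl((\Lambda-\lambda_{0}I)^{-1}\bigr)=\dom(\Lambda)$ and hence $M(z)\psi=(\Lambda-\lambda_{0}I)(I+T(z))\psi=0$, so the matter reduces to the injectivity of $M(z)$ on $\dom(\Lambda)$. For this, take $\phi\in\dom(\Lambda)$ with $M(z)\phi=0$. Using \eqref{eq:difference-m-functions} with $w=\cc{z}$ together with \eqref{eq:M-herglotz}, one obtains $\Im\inner{M(z)\phi}{\phi}=(\Im z)\norm{\gamma(\cc{z})\phi}_{\mathcal{H}}^{2}$; since the left-hand side vanishes and $\Im z\ne0$, this forces $\gamma(\cc{z})\phi=0$. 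By \eqref{eq:solution-operator} and \eqref{eq:other-form-inverse}, $\gamma(\cc{z})=(I-\cc{z}A_{0}^{-1})^{-1}\Pi$ with $(I-\cc{z}A_{0}^{-1})^{-1}$ a bounded bijection of $\mathcal{H}$ (as $\cc{z},0\in\rho(A_{0})$), so $\Pi\phi=0$, whence $\phi=0$ by \eqref{eq:properties-a-pi}. This gives the required injectivity.

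I expect the only delicate point to be the bookkeeping of unbounded operators: making sure the factorisation $M(z)=(\Lambda-\lambda_{0}I)(I+T(z))$ is an identity of operators on the correct domain, that the Fredholm alternative is applied to the genuinely compact operator $T(z)$ on all of $\mathcal{E}$, and that the passage from the spectral hypothesis on the self-adjoint $\Lambda$ to compactness of its resolvent is stated with the right assumptions (isolated eigenvalues of finite multiplicity with no finite accumulation point). Beyond that the argument is routine, and I foresee no substantial obstacle.
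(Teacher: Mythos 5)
Your proof is correct, and it takes a genuinely different (and in one respect more complete) route than the paper's. The paper chooses a finite-rank $K$ so that $\Lambda+K$ is injective, hence $(\Lambda+K)^{-1}\in\mathfrak{S}_\infty$ by the discreteness hypothesis, and then applies the second resolvent identity $M(z)^{-1}-(\Lambda+K)^{-1}=(\Lambda+K)^{-1}\Xi M(z)^{-1}$ with $\Xi$ bounded to conclude compactness of $M(z)^{-1}$ --- but this argument tacitly presupposes that $M(z)^{-1}$ already exists as a bounded operator on $\mathcal{E}$. You instead factorise $M(z)=(\Lambda-\I I)\bigl(I+T(z)\bigr)$ with $T(z)=(\Lambda-\I I)^{-1}\bigl(\widetilde{M}(z)+\I I\bigr)\in\mathfrak{S}_\infty$ and derive the bounded invertibility of $I+T(z)$ from the Fredholm alternative, reducing it to the injectivity of $M(z)$ on $\dom(\Lambda)$, which you obtain from the Herglotz identity $\Im\inner{M(z)\phi}{\phi}=(\Im z)\norm{\gamma(\cc{z})\phi}^2$ (a consequence of \eqref{eq:difference-m-functions} and \eqref{eq:M-herglotz}) together with $\ker(\Pi)=\{0\}$ from \eqref{eq:properties-a-pi}. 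What your approach buys is precisely the missing justification that $0\in\rho(M(z))$ for nonreal $z$; what the paper's approach buys is brevity, since it only needs one line once bounded invertibility is granted. Your bookkeeping of domains (that $T(z)$ maps $\mathcal{E}$ into $\dom(\Lambda)$, so the factorisation is an identity on $\dom(M(z))=\dom(\Lambda)$, and that $(I+T(z))^{-1}$ preserves $\dom(\Lambda)$) is exactly the care the argument requires, and I see no gap.
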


\begin{proof}

Choose a finite-rank operator $K$ such that $\Lambda+K$ has trivial kernel and $(\Lambda+K)^{-1}\in\mathfrak{S}_\infty.$ Such a choice is obviously always possible. Furthermore, by the second Hilbert identity,
\[
M(z)^{-1}-(\Lambda+K)^{-1}=(\Lambda+K)^{-1}\Xi M(z)^{-1},
\]
where $\Xi$ is a bounded operator. Hence,
$M(z)^{-1}\in{\mathfrak S}_\infty.$
\end{proof}

\begin{corollary}
Within the conditions of Lemma \ref{B_condition}, if $B$ is bounded, then $BM(z)^{-1}\in\mathfrak{S}_\infty$ for all $z\in \mathbb{C}\setminus\mathbb{R}$.
\end{corollary}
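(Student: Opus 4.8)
The plan is to reduce the statement directly to Lemma~\ref{B_condition} together with the ideal property of the compact operators. First I would recall that $\mathfrak{S}_\infty$, being the Banach algebra of compact operators on $\mathcal{E}$, is in fact a (closed, two-sided) operator ideal: whenever $T\in\mathfrak{S}_\infty$ and $R$ is a bounded operator on $\mathcal{E}$, both $RT$ and $TR$ belong to $\mathfrak{S}_\infty$. This is standard (see \eg \cite[Section 11]{MR1192782}).

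Next, I would invoke Lemma~\ref{B_condition}: under its hypotheses on $\Lambda$, one has $M(z)^{-1}\in\mathfrak{S}_\infty$ for every $z\in\complex\setminus\reals$. Since by assumption $B$ is bounded on $\mathcal{E}$, applying the left-ideal property with $R=B$ and $T=M(z)^{-1}$ immediately gives $BM(z)^{-1}\in\mathfrak{S}_\infty$ for all $z\in\complex\setminus\reals$, which is the claim.

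There is essentially no obstacle here: the content is entirely carried by Lemma~\ref{B_condition}, and the remaining step is the elementary fact that the compact operators absorb bounded operators under composition. The only point worth a brief remark is that no closure issues arise, because we are assuming $B$ to be bounded (rather than merely closable), so the product $BM(z)^{-1}$ is an honest bounded operator to begin with and the compactness conclusion is unambiguous.
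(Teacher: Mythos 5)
Your proposal is correct and coincides with the argument the paper intends: the corollary is stated without proof precisely because, as you note, it follows immediately from Lemma \ref{B_condition} combined with the two-sided ideal property of $\mathfrak{S}_\infty$ in the algebra of bounded operators on $\mathcal{E}$. Your additional remark that boundedness of $B$ avoids any closure issues is accurate and does not alter the argument.
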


\begin{remark}
  Note that if one drops the condition that $B$ is bounded, it is possible for $\mathcal Q_B$ to be empty. Indeed, put $\alpha=-\Lambda$ and $\beta=I$ (as shown in \cite{Ryzh_spec}, under these assumptions the operator $A_{\alpha\beta}$ is the 
  Kre\u\i n extension \cite{Ashbaugh_et_al} of the operator $\widetilde A$). Then by \eqref{M_operator} one has
\begin{equation*}
 B+ M(z)= z\Pi^*(I-zA_0^{-1})^{-1}\Pi,
\end{equation*}
which is shown to be compact under the assumptions of Lemma \ref{B_condition}. However, the following theorem suggests that instead of the 
restriction that $B$ be bounded, it suffices to assume that it is compact relative to $M(z),$ in order to ensure that $\mathcal Q_B$ coincides with $\mathbb{C}\setminus \mathbb{R}$ with the exception of a discrete set.
\end{remark}





\begin{theorem}
\label{conditional_lemma}
Suppose that  $BM(z)^{-1}\in\mathfrak{S}_\infty$ for at least one
$z\in{\mathbb C}_+$ and at least one $z\in{\mathbb C}_-$ (and hence at all $z\in{\mathbb C}\setminus{\mathbb R}$), where $B$ is defined by (\ref{B_star}). If
$I+BM(z)^{-1}$ is invertible for for at least one $z\in{\mathbb
    C}_+$ and at least one $z\in{\mathbb C}_-,$
then

1) The operator $A_{\alpha\beta}$ has at most discrete spectrum in
$\complex\setminus\reals$ (accumulating
at the real line only).

2) One has
$\rho(A_{\alpha\beta})={\mathcal Q}_{B}.$
\end{theorem}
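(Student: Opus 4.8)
The plan is to recast the set $\mathcal{Q}_B$ in terms of the operator function $z\mapsto I+BM(z)^{-1}$, apply the analytic Fredholm theorem to it on each half-plane, and combine this with the inclusion $\mathcal{Q}_B\subset\rho(A_{\alpha\beta})$ of \eqref{inclusion} and the resolvent formula \eqref{Krein_formula1}. (Throughout we use that $M(z)^{-1}$ is a bounded operator on $\mathcal{E}$ for $z\in\complex\setminus\reals$, which is implicit in the hypothesis and holds, e.g., under the assumptions of Lemma~\ref{B_condition}.) For the first step, since $\dom(\Lambda)\subset\dom(B)$ and $\ran(M(z)^{-1})=\dom(\Lambda)$, one has $\bigl(\overline{B+M(z)}\bigr)M(z)^{-1}=I+BM(z)^{-1}$ on $\mathcal{E}$; using the compactness of $BM(z)^{-1}$ (so that $I+BM(z)^{-1}$ is Fredholm of index zero, hence invertible as soon as it is injective) and a short closure argument (needed because $B$, and with it $B+M(z)$, is merely closable), I would verify that $M(z)^{-1}\bigl(I+BM(z)^{-1}\bigr)^{-1}$ is a two-sided bounded inverse of $\overline{B+M(z)}$ precisely when $I+BM(z)^{-1}$ is invertible, so that
\[
\mathcal{Q}_B=\bigl\{z\in\complex\setminus\reals:\ I+BM(z)^{-1}\ \text{boundedly invertible}\bigr\},\qquad \bigl(\overline{B+M(z)}\bigr)^{-1}=M(z)^{-1}\bigl(I+BM(z)^{-1}\bigr)^{-1}\ \text{on}\ \mathcal{Q}_B.
\]

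Next, $z\mapsto BM(z)^{-1}$ is $\mathfrak{S}_\infty$-valued and analytic on each of $\complex_+$ and $\complex_-$, and by hypothesis $I+BM(z)^{-1}$ is invertible at a point of each half-plane. The analytic Fredholm theorem then gives that $\bigl(I+BM(z)^{-1}\bigr)^{-1}$ is meromorphic on $\complex_\pm$ with poles of finite rank, and that $I+BM(z)^{-1}$ is invertible off a set $D=D_+\cup D_-$ with $D_\pm\subset\complex_\pm$ having no accumulation points in $\complex_\pm$. Hence $\mathcal{Q}_B=(\complex\setminus\reals)\setminus D$, and since $\mathcal{Q}_B\subset\rho(A_{\alpha\beta})$ by \eqref{inclusion}, we obtain $\spec(A_{\alpha\beta})\cap(\complex\setminus\reals)\subset D$, which is exactly assertion~1).

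For assertion~2) it remains to prove $D\subset\spec(A_{\alpha\beta})$. Fix $z_0\in D$ and suppose, to get a contradiction, that $z_0\in\rho(A_{\alpha\beta})$. On a small punctured disc $U\setminus\{z_0\}\subset\mathcal{Q}_B$ about $z_0$ the resolvent is given by \eqref{Krein_formula1}, which in view of \eqref{B_star}, the identities $(I-zA_0^{-1})^{-1}\Pi=\gamma(z)$ and $\Pi^*(I-zA_0^{-1})^{-1}=\gamma(\overline{z})^*$ (cf.\ \eqref{eq:solution-operator}, \eqref{eq:other-form-inverse}), and the first step reads
\[
(A_{\alpha\beta}-zI)^{-1}=(A_0-zI)^{-1}-\gamma(z)\,M(z)^{-1}\bigl(I+BM(z)^{-1}\bigr)^{-1}\gamma(\overline{z})^*.
\]
As $(A_{\alpha\beta}-zI)^{-1}$ (by the contradiction hypothesis) and $(A_0-zI)^{-1}$ are both analytic on all of $U$, the map $z\mapsto\gamma(z)M(z)^{-1}\bigl(I+BM(z)^{-1}\bigr)^{-1}\gamma(\overline{z})^*$ extends analytically across $z_0$. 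On the other hand $\bigl(I+BM(z)^{-1}\bigr)^{-1}$ has a genuine pole at $z_0$, of some order $m\ge1$, with leading Laurent coefficient a nonzero finite-rank operator $R_m$; since $\gamma(z_0)M(z_0)^{-1}$ is injective (by \eqref{eq:gamma-0-solution-operator} and injectivity of $M(z_0)^{-1}$) and $\gamma(\overline{z_0})^*=\Pi^*(I-z_0A_0^{-1})^{-1}$ has dense range (because $\ker\Pi=\{0\}$), the operator $\gamma(z_0)M(z_0)^{-1}R_m\gamma(\overline{z_0})^*$ is nonzero, so the sandwiched function in fact has a pole of order $m$ at $z_0$ --- a contradiction. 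Thus $D\subset\spec(A_{\alpha\beta})$, and together with $\mathcal{Q}_B\subset\rho(A_{\alpha\beta})$ this yields $\rho(A_{\alpha\beta})=\mathcal{Q}_B$.

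The main obstacle I anticipate is the non-cancellation argument in the last step: one must ensure that the pole of $\bigl(I+BM(z)^{-1}\bigr)^{-1}$ is not annihilated upon multiplication on the left by $\gamma(z)M(z)^{-1}$ and on the right by $\gamma(\overline{z})^*$, which is where injectivity of the solution operator $\gamma$ and of $M(z)^{-1}$, together with density of $\ran(\Pi^*)$, are essential. A secondary, more bookkeeping difficulty is the first step, where the identification $\bigl(\overline{B+M(z)}\bigr)^{-1}=M(z)^{-1}\bigl(I+BM(z)^{-1}\bigr)^{-1}$ must be justified carefully because $B$ is only closable, not bounded.
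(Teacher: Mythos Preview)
Your proposal is correct and follows the same overall strategy as the paper: analytic Fredholm theorem applied to $I+BM(z)^{-1}$, the factorisation $\bigl(\overline{B+M(z)}\bigr)^{-1}=M(z)^{-1}\bigl(I+BM(z)^{-1}\bigr)^{-1}$, and then the Kre\u\i n formula combined with the inclusion \eqref{inclusion}.

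The difference lies entirely in the reverse inclusion $\rho(A_{\alpha\beta})\subset\mathcal{Q}_B$. The paper dispatches this in one sentence (``the right-hand side of \eqref{Krein_formula} is analytic whenever its left-hand side is, \ie on the set $\rho(A_{\alpha\beta})$, which immediately implies the inclusion''), whereas you supply precisely the justification this sentence hides: that a pole of $\bigl(I+BM(z)^{-1}\bigr)^{-1}$ cannot be killed by the sandwich $\gamma(z)M(z)^{-1}(\,\cdot\,)\gamma(\overline z)^*$, using injectivity of $\gamma(z_0)M(z_0)^{-1}$ (from \eqref{eq:gamma-0-solution-operator}) and density of $\ran\Pi^*$ (from $\ker\Pi=\{0\}$). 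What you flag as the ``main obstacle'' is exactly the step the paper leaves implicit, so your version is the more complete of the two. Your secondary concern about the closure identity when $B$ is merely closable is likewise a point the paper does not spell out.
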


\begin{proof} By the Analytic Fredholm Theorem, see \cite[Theorem 8.92]{RR}, the operator $I+BM(z)^{-1}$ is invertible at all $z\in\complex\setminus\reals$ with the exception of a discrete set of points. Therefore, for any $z$ such that the inverse exists, one has
\[
(\overline{B+M(z)})^{-1}=M(z)^{-1}\bigl(I+BM(z)^{-1}\bigr)^{-1}.
\]

This implies that the ``Kre\u\i n formula", {\it cf.} (\ref{Krein_formula1}), holds at all $z\in\complex\setminus\reals$ with the exception of a discrete set of points:
\begin{equation}
(A_{\alpha\beta}-z)^{-1}=(A_{0}-z I)^{-1}-(I-zA_{0}^{-1})^{-1}\Pi\bigl(\overline{B+M(z)}\bigr)^{-1}\Pi^{*},\qquad B=\beta^{-1}\alpha,
\label{Krein_formula}
\end{equation}
and therefore $\rho(A_{\alpha\beta})$ is discrete in $\complex\setminus\reals,$ which proves the first claim.

Furthermore, the right-hand side of (\ref{Krein_formula}) is analytic whenever its left-hand side is, {\it i.e.} on the set $\rho(A_{\alpha\beta}),$ which immediately implies the inclusion $\rho(A_{\alpha\beta})\subset{\mathcal Q}_{B}.$
The second claim of the theorem now follows by comparing this with (\ref{inclusion}).
\end{proof}




The formulae in the next lemma are analogous to
\cite[Eqs. (2.18), (2.22)]{MR2330831}.
\begin{lemma}
  \label{lem:ryzhov-formulae}
  Assume that $B$ defined by (\ref{B_star}) is bounded. Then the following identities hold:
  \begin{align}
    \Gamma_0(A_{\alpha\beta}-zI)^{-1}&=\Theta_{B}(z)^{-1}\Gamma_0(L-zI)^{-1}\quad\ \ \forall\,z\in\complex_-\cap{\mathcal Q}_{B},
    \label{eq:ii-kappa}
    \\[0.2em]
\Gamma_0(A_{\alpha\beta}-zI)^{-1}
&=\widehat{\Theta}_{B}(z)^{-1}\Gamma_0(L^*-zI)^{-1}\quad\forall\,z\in\complex_+\cap{\mathcal Q}_{B},
\label{eq:kappa-ii+}
\end{align}
where $\Theta_B$ and $\widehat{\Theta}_B$ are defined via their inverses:
\begin{align}
\Theta_B(z)^{-1}&=2{\rm i}Q_B(z)\bigl(I-S^*(\overline{z})\bigr)^{-1},\quad z\in\complex_-\cap{\mathcal Q}_B,
\label{theta_def}
\\[0.2em]
\widehat{\Theta}_B(z)^{-1}&=2{\rm i}Q_B(z)\bigl(I-S(z)\bigr)^{-1},\quad z\in\complex_+\cap{\mathcal Q}_B.
\label{theta_hat_def}
\end{align}
\end{lemma}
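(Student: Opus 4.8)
The plan is to apply the boundary map $\Gamma_0$ to the Kre\u\i n-type resolvent formula for $A_{\alpha\beta}$, to recognise the result as a left multiple of $\Gamma_0(L-zI)^{-1}$ (respectively $\Gamma_0(L^*-zI)^{-1}$), and finally to rewrite the multiplier in terms of the characteristic function $S$ by means of \eqref{eq:def-characteristic-of-L}, \eqref{eq:adjoint-characteristic-of-L}. Since $B$ is bounded and $M(z)$ is closed, one has $\overline{B+M(z)}=B+M(z)$ with $\dom(B+M(z))=\dom(M(z))=\dom(\Lambda)$; combining \eqref{Krein_formula1} with \eqref{B_star} then gives, for $z\in{\mathcal Q}_{B}$,
\[
(A_{\alpha\beta}-zI)^{-1}=(A_0-zI)^{-1}+(I-zA_0^{-1})^{-1}\Pi\,Q_B(z)\,\Pi^*(I-zA_0^{-1})^{-1}.
\]
Applying $\Gamma_0$ and using that $\ran\bigl((A_0-zI)^{-1}\bigr)=\dom(A_0)=\ker(\Gamma_0)$ (see \eqref{eq:equality-domains-gamma-a}), that $(I-zA_0^{-1})^{-1}\Pi=\gamma(z)$ (see \eqref{eq:solution-operator}, \eqref{eq:other-form-inverse}) with $\Gamma_0\gamma(z)=I_{\mathcal E}$ (see \eqref{eq:gamma-0-solution-operator}), together with the identity $\Pi^*(I-zA_0^{-1})^{-1}=\gamma(\cc{z})^*$ (which follows from \eqref{eq:solution-operator} and $A_0=A_0^*$), I obtain
\[
\Gamma_0(A_{\alpha\beta}-zI)^{-1}=Q_B(z)\,\gamma(\cc{z})^*,\qquad z\in{\mathcal Q}_{B}.
\]

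Next I would eliminate $\gamma(\cc{z})^*$ using \eqref{eq:Gamma-0-L-through-M-gamma}. For $z\in\complex_-$ that formula reads $\Gamma_0(L-zI)^{-1}=-(M(z)-\I I)^{-1}\gamma(\cc{z})^*$, so $\ran\bigl(\Gamma_0(L-zI)^{-1}\bigr)\subset\dom(\Lambda)$ and $\gamma(\cc{z})^*=-(M(z)-\I I)\,\Gamma_0(L-zI)^{-1}$; hence, for $z\in\complex_-\cap{\mathcal Q}_{B}$,
\[
\Gamma_0(A_{\alpha\beta}-zI)^{-1}=-Q_B(z)\bigl(M(z)-\I I\bigr)\,\Gamma_0(L-zI)^{-1},
\]
which is \eqref{eq:ii-kappa} with $\Theta_B(z)^{-1}=-Q_B(z)(M(z)-\I I)$. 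To reconcile this with \eqref{theta_def} I would invoke \eqref{eq:adjoint-characteristic-of-L}, which for $z\in\complex_-$ gives $I-S^*(\cc{z})=-2\I(M(z)-\I I)^{-1}$; thus $I-S^*(\cc{z})$ is boundedly invertible on its range $\dom(\Lambda)$, there $M(z)-\I I=-2\I\,(I-S^*(\cc{z}))^{-1}$, and therefore $-Q_B(z)(M(z)-\I I)=2\I\,Q_B(z)(I-S^*(\cc{z}))^{-1}$, as required. Identity \eqref{eq:kappa-ii+} is obtained in exactly the same way: for $z\in\complex_+\cap{\mathcal Q}_{B}$ one uses the second line of \eqref{eq:Gamma-0-L-through-M-gamma} to get $\widehat{\Theta}_B(z)^{-1}=-Q_B(z)(M(z)+\I I)$, and then \eqref{eq:def-characteristic-of-L}, in the form $I-S(z)=2\I(M(z)+\I I)^{-1}$, rewrites this as the multiplier in \eqref{theta_hat_def}.

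The step I expect to require the most care is not the algebra but the bookkeeping of domains: $\gamma(\cc{z})^*$ and $\Gamma_0(A_{\alpha\beta}-zI)^{-1}$ are everywhere defined bounded operators from $\mathcal H$ to $\mathcal E$, whereas $M(z)\mp\I I$ and the inverses $(I-S^*(\cc{z}))^{-1}$, $(I-S(z))^{-1}$ are a priori only densely defined. One must therefore check that each displayed identity is an identity of \emph{bounded} operators, i.e. that $Q_B(z)(M(z)\mp\I I)$ extends boundedly to $\mathcal E$. This is precisely where the hypothesis that $B$ is bounded is used: writing $M(z)\mp\I I=(B+M(z))-(B\pm\I I)$ on the common domain $\dom(\Lambda)$ yields $Q_B(z)(M(z)\mp\I I)=-I+(B+M(z))^{-1}(B\pm\I I)$, whose right-hand side is bounded since $B\pm\I I$ is bounded and $(B+M(z))^{-1}=-Q_B(z)$ is bounded for $z\in{\mathcal Q}_{B}$; a density argument — the ranges of $\Gamma_0(L-zI)^{-1}$ and $\Gamma_0(L^*-zI)^{-1}$ are dense in $\mathcal E$ by Lemma~\ref{lem:expected-density} — then legitimises passing from $\dom(\Lambda)$ to all of $\mathcal E$. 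The remaining manipulations are routine uses of the second resolvent identity and of the definitions of $M$, $S$, $\gamma$ and $Q_B$.
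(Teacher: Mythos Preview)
Your proof is correct and actually somewhat more streamlined than the paper's. The paper does not apply $\Gamma_0$ to the Kre\u\i n formula \eqref{Krein_formula1}; instead it fixes $h\in\mathcal H$, sets $g:=(L-zI)^{-1}h-(A_{\alpha\beta}-zI)^{-1}h\in\ker(A-zI)$, and manipulates the boundary conditions $\Gamma_1 g_{-\I I\,I}=\I\Gamma_0 g_{-\I I\,I}$, $\beta\Gamma_1 g_{\alpha\beta}=-\alpha\Gamma_0 g_{\alpha\beta}$ together with $\Gamma_1 g=M(z)\Gamma_0 g$ to arrive at
\[
\bigl\{I-\overline{(B+M(z))}^{-1}(B+\I I)\bigr\}\Gamma_0(L-zI)^{-1}h=\Gamma_0(A_{\alpha\beta}-zI)^{-1}h,
\]
and then uses the second resolvent identity to rewrite the bracket as $\overline{(B+M(z))}^{-1}(M(z)-\I I)=2\I Q_B(z)(I-S^*(\cc z))^{-1}$. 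Your route reaches the same intermediate expression $-Q_B(z)(M(z)-\I I)$ by reading it off directly from the Kre\u\i n formula via $\Gamma_0\gamma(z)=I$ and $\gamma(\cc z)^*=-(M(z)-\I I)\Gamma_0(L-zI)^{-1}$, which is shorter but presupposes \eqref{Krein_formula1}; the paper's argument is more self-contained in that it rederives the relevant piece of the Kre\u\i n formula from the boundary relations. Your careful remark that boundedness of $B$ is exactly what makes $Q_B(z)(M(z)\mp\I I)=-I+(B+M(z))^{-1}(B\pm\I I)$ a bounded operator is a point the paper leaves implicit; note, incidentally, that the density appeal to Lemma~\ref{lem:expected-density} is not really needed, since $\Gamma_0(L-zI)^{-1}$ already maps into $\dom(\Lambda)$ by \eqref{eq:Gamma-0-L-through-M-gamma}.
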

\begin{proof}
Fix an arbitrary $h\in\cH$ and define
\begin{equation*}
    g_{\alpha\beta}:=(A_{\alpha\beta}-zI)^{-1}h,\qquad z\in\rho(A_{\alpha\beta}),    
\end{equation*}
so that, in particular, $g_{-\I I\,I}=(L-zI)^{-1}h,$ $z\in{\mathbb C}_+,$ and $g_{\I I I}=(L^*-zI)^{-1}h,$ $z\in{\mathbb C}_-.$

In order to prove (\ref{eq:ii-kappa}),
  suppose that $z\in\complex_-\cap{\mathcal Q}_{B},$
  so the resolvents $(L-z
  I)^{-1}$ and $(A_{\alpha\beta}-z I)^{-1}$ are defined
  on the whole space $\cH$.
Clearly, the vector
  \begin{equation*}
  \label{g_definition}
    g:=g_{-\I I\,I}-g_{\alpha\beta}=\left((L-zI)^{-1}-(A_{\alpha\beta}-zI)^{-1}\right)h
  \end{equation*}
is an element of $\ker(A-zI).$
It follows from $g_{-\I I\,I}\in\dom(L)$ and $g_{\alpha\beta}\in\dom(A_{\alpha\beta})$  that
$
  \Gamma_1g_{-\I I\,I}=\I\Gamma_0g_{-\I I\,I}$ and $\beta\Gamma_1g_{\alpha\beta}=-\alpha\Gamma_0g_{\alpha\beta},$
and therefore one has
\begin{equation}
\begin{aligned}
  0=\beta\Gamma_1(g+g_{\alpha\beta})-\I\beta\Gamma_0(g+g_{\alpha\beta})
  &=\beta\Gamma_1g-\I\beta\Gamma_0g+\beta\Gamma_1g_{\alpha\beta}
  -\I\beta\Gamma_0g_{\alpha\beta}\\[0.4em]
  &=\beta M(z)\Gamma_0g-\I\beta\Gamma_0g-\alpha\Gamma_0g_{\alpha\beta}-\I\beta\Gamma_0g_{\alpha\beta},
\end{aligned}
 \label{latter_calculation}
 \end{equation}
where in the last equality we also use the fact that $g\in\ker(A-zI),$
together with Definition~\ref{def:m-function}.
Hence, by collecting the terms in the calculation (\ref{latter_calculation}), one has ({\it cf.} (\ref{g_definition}))
\begin{equation*}
\bigl(\alpha+\beta M(z)\bigr)\Gamma_0g=(\alpha+\I\beta)\Gamma_0(g+g_{\alpha\beta})=(\alpha+\I\beta)\Gamma_0g_{-\I I\, I},
\end{equation*}
which, in turn, implies that, for $z\in {\mathcal Q}_{B}$ one has
\begin{equation*}
  \Bigl\{I-\overline{\bigl({B}+M(z)\bigr)}^{-1}\bigl({B}+\I I\bigr)\Bigr\}\Gamma_0g_{-\I I\,I}=
\Gamma_0g_{\alpha\beta}.
\end{equation*}
Finally, using the second resolvent identity
\[
(\overline{B+\I I})^{-1}-\bigl(\overline{B+M(z)}\bigr)^{-1}=
\bigl(\overline{B+M(z)}\bigr)^{-1}\bigl(M(z)-\I I\bigr)(\overline{B+\I I})^{-1},
\]
we obtain
\begin{align*}
I-\overline{\bigl({B}+M(z)\bigr)}^{-1}\bigl({B}+\I I\bigr)&=\overline{\bigl({B}+M(z)\bigr)}^{-1}\bigl(M(z)-\I I\bigr)
=2{\rm i}Q_{B}(z)\bigl(I-S^*(\overline{z})\bigr)^{-1},
\end{align*}
where we use the formula (\ref{eq:adjoint-characteristic-of-L}).


The identity \eqref{eq:kappa-ii+} is proved by an argument similar to the above, where the vector $g_{-\I I\,I}$ is replaced with with $g_{\I I\,I},$ for $z\in{\mathbb C}_+,$ and the formula (\ref{eq:def-characteristic-of-L}) is used instead of (\ref{eq:adjoint-characteristic-of-L}).
\end{proof}

\begin{remark}
  Note that the boundedness condition imposed on $B$ in Lemma \ref{lem:ryzhov-formulae} can be relaxed. Not only can we assume that $B$ is such that $BM(z)^{-1}\in\mathfrak{S}_\infty,$ as suggested by Theorem \ref{conditional_lemma}, but the latter condition can be relaxed even further by assuming that $B$ is bounded relative to $M(z)$ with the bound\footnote{In the case when $B$ is compact relative to $M(z),$ the bound is zero, see \cite{Binding_Hryniv}.} less than 1 (see \cite{MR0407617}), which clearly suffices for $\overline{B+M(z)}=B+M(z)$. In present paper, however, we limit ourselves to physically motivated applications to BVP, which renders these considerations unnecessary. For this reason in what follows we will only consider the case when the parameter $B$ is bounded.

\end{remark}

\section{Functional model for non-necessarily dissipative operators}

In this section we obtain a useful representation for the resolvent of $A_{\alpha\beta}$ in the Hilbert space ${\mathfrak H},$ {\it i.e.} in the spectral functional model representation of $L.$ The results of this section generalise those of \cite{MR2330831}.
 We start by proving the following lemma. Throughout we assume that the condition imposed by Lemma \ref{B_condition} holds.

\label{model_sec}
\begin{lemma}
\label{lemma61}
Suppose that $B$ defined by (\ref{B_star}) is bounded, and denote
\begin{equation*}
\chi_B^\pm:=\frac{1}{2{\rm i}}(\pm B+{\rm i}I).
\end{equation*}
The following formulae hold for the functions defined in (\ref{theta_def}), (\ref{theta_hat_def}):
\begin{align}
\Theta_B(z)&=S^*(\overline{z})\chi_B^++\chi_B^-,\qquad z\in{\mathbb C}_-\cap{\mathcal Q}_B,\label{thetaprop}
\\[0.2em]
\widehat{\Theta}_B(z)&=S(z)\chi_B^-+\chi_B^+,\qquad z\in{\mathbb C}_+\cap{\mathcal Q}_B.\label{thetahatprop}
\end{align}
\end{lemma}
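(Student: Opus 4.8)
The plan is to invert the defining relations \eqref{theta_def}, \eqref{theta_hat_def} directly and simplify the resulting expression using the definition \eqref{eq:def-characteristic-of-L} of the characteristic function $S$ together with the notation $Q_B(z)=-(\overline{B+M(z)})^{-1}$. Since $\Theta_B(z)$ is defined via its inverse, the statement \eqref{thetaprop} is equivalent to checking that the claimed right-hand side $S^*(\overline z)\chi_B^+ + \chi_B^-$ is a two-sided inverse of $2\I Q_B(z)(I-S^*(\overline z))^{-1}$ on $\complex_-\cap\mathcal Q_B$, and similarly for \eqref{thetahatprop} on $\complex_+\cap\mathcal Q_B$.

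First I would record, from \eqref{eq:adjoint-characteristic-of-L}, the identity $I-S^*(\overline z) = -2\I(M(z)-\I I)^{-1}$ valid for $z\in\complex_-$, so that $(I-S^*(\overline z))^{-1} = \tfrac{1}{-2\I}(M(z)-\I I)$. Substituting this into \eqref{theta_def} gives $\Theta_B(z)^{-1} = 2\I Q_B(z)\cdot\tfrac{1}{-2\I}(M(z)-\I I) = -Q_B(z)(M(z)-\I I) = (\overline{B+M(z)})^{-1}(M(z)-\I I)$. Next I would compute the product of this with the candidate inverse: using $\chi_B^+ = \tfrac1{2\I}(B+\I I)$, $\chi_B^- = \tfrac1{2\I}(-B+\I I)$, so that $S^*(\overline z)\chi_B^+ + \chi_B^- = \tfrac1{2\I}\bigl(S^*(\overline z)(B+\I I) + (-B+\I I)\bigr)$. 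One then forms $(\overline{B+M(z)})^{-1}(M(z)-\I I)\cdot\tfrac1{2\I}\bigl(S^*(\overline z)(B+\I I)-B+\I I\bigr)$ and uses $S^*(\overline z) = I + 2\I(M(z)-\I I)^{-1}$ once more to expand $(M(z)-\I I)S^*(\overline z) = (M(z)-\I I) + 2\I I = M(z)+\I I$. A short algebraic simplification of $(M(z)-\I I)S^*(\overline z)(B+\I I) + (M(z)-\I I)(-B+\I I)$ should collapse to $2\I(B+M(z))$ (acting appropriately on the relevant domain/closure), so that the whole product telescopes to $(\overline{B+M(z)})^{-1}\cdot(B+M(z)) = I$; the opposite-order product is handled the same way. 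The identity \eqref{thetahatprop} is proved by the same computation with $\complex_-$ replaced by $\complex_+$, $S^*(\overline z)$ replaced by $S(z)$, and \eqref{eq:def-characteristic-of-L} in the form $I-S(z) = 2\I(M(z)+\I I)^{-1}$, i.e. $(I-S(z))^{-1} = \tfrac1{2\I}(M(z)+\I I)$, used in place of \eqref{eq:adjoint-characteristic-of-L}.

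The main obstacle I anticipate is purely bookkeeping around closures and domains: $M(z)$ is unbounded, $B$ is bounded but $B+M(z)$ need only be closable, and the manipulations above formally move $(M(z)\pm\I I)$ past operators. The clean way around this is to write everything in terms of the bounded operators $(M(z)\pm\I I)^{-1}$ (which exist since $\pm\I\in\rho(M(z))$ for $z$ real or otherwise by the Herglotz property and \eqref{eq:def-characteristic-of-L}) and the bounded operator $Q_B(z)=-(\overline{B+M(z)})^{-1}$, avoiding any reference to $M(z)$ as an unbounded operator except through the already-established identity $(\overline{B+M(z)})^{-1}=M(z)^{-1}(I+BM(z)^{-1})^{-1}$ of Theorem~\ref{conditional_lemma} if needed. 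So in the write-up I would phrase the verification as an identity between bounded operators: multiply the candidate $S^*(\overline z)\chi_B^+ + \chi_B^-$ on the left by $-Q_B(z)(M(z)-\I I) = (\overline{B+M(z)})^{-1}(M(z)-\I I)$, interpret $(M(z)-\I I)$ acting on $\ran$ of the relevant resolvents, and reduce to $I$, thereby confirming it is the inverse of $\Theta_B(z)^{-1}$ as displayed in \eqref{theta_def}. This keeps all the steps rigorous while remaining essentially the elementary substitution sketched above.
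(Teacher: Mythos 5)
Your proposal is correct and takes essentially the same route as the paper: both arguments reduce to the identity \eqref{eq:adjoint-characteristic-of-L}, i.e.\ $I-S^*(\overline z)=-2\I\bigl(M(z)-\I I\bigr)^{-1}$, combined with elementary algebra in $B$ and $M(z)$. The only cosmetic difference is that the paper computes $\Theta_B(z)$ directly by substituting $M(z)=\I I-2\I\bigl(I-S^*(\overline z)\bigr)^{-1}$ into $-(2\I)^{-1}\bigl(I-S^*(\overline z)\bigr)\bigl(B+M(z)\bigr)$, whereas you verify that $S^*(\overline z)\chi_B^++\chi_B^-$ is a two-sided inverse of the expression in \eqref{theta_def}; the underlying computation is the same.
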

\begin{proof}
By the definition (\ref{theta_def}) and using the representation (\ref{eq:adjoint-characteristic-of-L}), we write, for $z\in\complex_-\cap {\mathcal Q}_B,$
\begingroup
\allowdisplaybreaks
\begin{align*}
\Theta_B(z)
&=-(2{\rm i})^{-1}\bigl(I-S^*(\overline{z})\bigr)\bigl(B+M(z)\bigr)=-(2{\rm i})^{-1}\bigl(I-S^*(\overline{z})\bigr)\bigl(B-2{\rm i}\bigl(I-S^*(\overline{z})\bigr)^{-1}+{\rm i}I\bigr)
\\[0.4em]
&
=-\bigl(I-S^*(\overline{z})\bigr)\chi_B^++I=S^*(\overline{z})\chi_B^++\chi_B^-,
\end{align*}
\endgroup
as claimed in (\ref{thetaprop}). Similarly, by the definition (\ref{theta_hat_def}) and using (\ref{eq:def-characteristic-of-L}), we obtain (\ref{thetahatprop}).
\end{proof}

The following is the main result of this section and is similar {\it in form} to \cite[Theorem 2.5]{MR2330831} and \cite[Theorem 3]{MR573902}. Its proof closely follows the lines of the mentioned works.
\begin{theorem}
  Suppose that $\beta^{-1}$ and $B=\beta^{-1}\alpha$ are bounded. Then
\begin{enumerate}[(i)]
\item  If $z\in\complex_-\cap{\mathcal Q}_{B}$
and
  $(\widetilde{g}, g)^\top\in K$, then
  \begin{equation*}
    \Phi(A_{\alpha\beta}-z I)^{-1}\Phi^{*}\left(\begin{matrix}\widetilde{g}\\[0.3em] g\end{matrix}\right)=P_K\frac{1}{\cdot-z}
\left(\begin{matrix}\widetilde{g}\\[0.4em] g-\chi_{B}^+\Theta_{B}(z)^{-1}(\widetilde{g}+S^*g)(z)\end{matrix}\right).
  \end{equation*}
\item  If $z\in\complex_+\cap{\mathcal Q}_{B}$
and
  $(\widetilde{g}, g)^\top\in K$, then
  \begin{equation*}
    \Phi(A_{\alpha\beta}-z I)^{-1}\Phi^{*}\left(\begin{matrix}\widetilde{g}\\[0.3em]g\end{matrix}\right)=P_K\frac{1}{\cdot-z}
\binom{\widetilde{g}-\chi_{B}^-\widehat{\Theta}_{B}(z)^{-1}(S\widetilde{g}+g)(z)}{g}\,.
  \end{equation*}
  Here, $(\widetilde{g}+S^*g)(z)$ and $(S\widetilde{g}+g)(z)$ denote
  the values at $z$ of the analytic continuations of the functions
  $\widetilde{g}+S^*g\in {H}^2_-({\mathcal E})$ and $S\widetilde{g}+g\in
  {H}^2_+({\mathcal E})$ into the lower half-plane and the upper half-plane, respectively.
\end{enumerate}
\end{theorem}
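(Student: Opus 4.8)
The plan is to reduce both formulae to the two-component model representation of $L$ just established, the Kre\u\i n-type resolvent formula for $A_{\alpha\beta},$ and the boundary-trace identities of Lemma~\ref{lem:ryzhov-formulae}. I would prove (i) in full; statement (ii) then follows by the same argument with the systematic replacements $L\mapsto L^{*},$ $\complex_-\mapsto\complex_+,$ $\Theta_B\mapsto\widehat\Theta_B,$ $S^{*}\mapsto S,$ $\mathscr F_+\mapsto\mathscr F_-,$ $\chi_B^+\mapsto\chi_B^-,$ $\widetilde g+S^{*}g\mapsto S\widetilde g+g,$ using $\Gamma_0(L^{*}-\cdot)^{-1}v\in H^{2}_+(\mathcal E)$ in place of $\Gamma_0(L-\cdot)^{-1}v\in H^{2}_-(\mathcal E)$ (Lemma~\ref{lem:bounded-hardy}) and \eqref{eq:kappa-ii+}, \eqref{eq:def-characteristic-of-L} in place of \eqref{eq:ii-kappa}, \eqref{eq:adjoint-characteristic-of-L}; note that the model representation of $(L^{*}-zI)^{-1}$ on $\complex_+$ used there is just the adjoint of the one for $(L-zI)^{-1}$ on $\complex_-.$

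First I would record the resolvent correction. Fix $z\in\complex_-\cap\mathcal Q_B$ and $h\in\mathcal H,$ and set $\xi:=\bigl[(L-zI)^{-1}-(A_{\alpha\beta}-zI)^{-1}\bigr]h.$ Since $L,A_{\alpha\beta}\subset A,$ one has $\xi\in\ker(A-zI)=\ran\gamma(z)$ by \eqref{eq:range-solution-kernel-A}, hence $\xi=\gamma(z)\Gamma_0\xi$ by \eqref{eq:gamma-0-solution-operator}, and $\Gamma_0\xi=\bigl(I-\Theta_B(z)^{-1}\bigr)\Gamma_0(L-zI)^{-1}h$ by \eqref{eq:ii-kappa}; thus $(A_{\alpha\beta}-zI)^{-1}=(L-zI)^{-1}-\gamma(z)\bigl(I-\Theta_B(z)^{-1}\bigr)\Gamma_0(L-zI)^{-1}.$ Now put $h=\Phi^{*}\binom{\widetilde g}{g}$ with $\binom{\widetilde g}{g}\in K;$ since $\Phi$ is unitary and carries $L_2(\reals_-,\mathcal E)\oplus\{0\}\oplus L_2(\reals_+,\mathcal E)$ onto $\mathfrak D_-\oplus\mathfrak D_+$ (Lemma~\ref{lem:isometry-Ds}), it carries $\mathcal H$ onto $K,$ so $h\in\mathcal H.$ Applying $\Phi,$ using the model representation of $(L-zI)^{-1},$ and writing $e:=\bigl(I-\Theta_B(z)^{-1}\bigr)\Gamma_0(L-zI)^{-1}h,$ one obtains
\[
\Phi(A_{\alpha\beta}-zI)^{-1}\Phi^{*}\binom{\widetilde g}{g}=P_K\frac{1}{\cdot-z}\binom{\widetilde g}{g}-\Phi\,\gamma(z)e .
\]

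Two explicit computations then finish the proof. The first is $\Gamma_0(L-zI)^{-1}\Phi^{*}\binom{\widetilde g}{g}=-\sqrt\pi\,(\widetilde g+S^{*}g)(z)$ for $z\in\complex_-$: by the intertwining $\left(\begin{smallmatrix}I&S^{*}\\ S&I\end{smallmatrix}\right)\Phi=\binom{\mathscr F_+}{\mathscr F_-}$ (Lemma~\ref{lem:f-pm-phi}, extended by continuity), Definition~\ref{def:F-mappings} and Lemma~\ref{lem:bounded-hardy}, for $v\in\mathcal H$ the function $\mathscr F_+v=-\tfrac1{\sqrt\pi}\Gamma_0(L-(\cdot-\I 0)I)^{-1}v$ lies in $H^{2}_-(\mathcal E)$ and equals $\widetilde g+S^{*}g$ with $\binom{\widetilde g}{g}=\Phi v,$ so evaluating this $H^2_-$-identity at $z$ gives the claim. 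The second is a closed form for $\Phi\gamma(z)e$ when $e\in\dom(\Lambda),$ read off from Lemma~\ref{lem:f-pm-phi} with $v_\pm=0$ and the (unique) representation $\gamma(z)e=\sqrt2\,\gamma(z)(M(z)-\I I)^{-1}w_-,$ $w_-=\tfrac1{\sqrt2}(M(z)-\I I)e,$ together with \eqref{eq:adjoint-characteristic-of-L} in the form $S^{*}(\cc z)(M(z)-\I I)=M(z)+\I I,$ namely $\Phi\gamma(z)e=\tfrac{\I}{2\sqrt\pi}(\cdot-z)^{-1}\binom{(M(z)+\I I)e}{-(M(z)-\I I)e}$ (the overall sign being fixed by the conventions of \eqref{theta_def}). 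Finally, for a constant $c\in\mathcal E$ one checks from \eqref{eq:pk-action}, using $(\cdot-z)^{-1}c\in H^{2}_+(\mathcal E)$ and $\bigl(S^{*}(\cdot)-S^{*}(\cc z)\bigr)(\cdot-z)^{-1}c\in H^{2}_-(\mathcal E)$ (analytic continuation of $S^{*}$ into $\complex_-$), that $P_K(\cdot-z)^{-1}\binom0c=(\cdot-z)^{-1}\binom{-S^{*}(\cc z)c}{c}.$ Matching this with the closed form for $\Phi\gamma(z)e$ identifies $\Phi\gamma(z)e$ with $P_K(\cdot-z)^{-1}\binom0c$ for the $c$ read off from the second component; substituting $e$ and using $\Theta_B(z)^{-1}=\overline{(B+M(z))}^{-1}(M(z)-\I I)$ (from the proof of Lemma~\ref{lem:ryzhov-formulae}), $\chi_B^+=\tfrac1{2\I}(B+\I I),$ and $B+\I I=(B+M(z))-(M(z)-\I I),$ one collapses $c$ to $\chi_B^+\Theta_B(z)^{-1}(\widetilde g+S^{*}g)(z),$ which together with the displayed identity for $\Phi(A_{\alpha\beta}-zI)^{-1}\Phi^{*}$ yields (i).

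The hard part will be the bookkeeping in the last step: keeping track of which half-plane each rational factor and each of the boundary operators $S(z),$ $S^{*}(\cc z)$ belongs to when computing $P_K,$ and matching all factors of $\I,$ $\sqrt2,$ $\sqrt\pi$ and all signs — in particular reconciling the equivalent ways of writing the resolvent correction, e.g.\ $\overline{(B+M(z))}^{-1}(B+\I I)(M(z)-\I I)^{-1}$ versus $(M(z)-\I I)^{-1}(B+\I I)\overline{(B+M(z))}^{-1},$ which coincide only after a resolvent-identity rearrangement. One must also confirm that the identification $\Phi\gamma(z)e=P_K(\cdot-z)^{-1}\binom0c$ is legitimate, i.e.\ that both sides lie in $K$ — the left because $\gamma(z)e\in\mathcal H$ and $\Phi(\mathcal H)=K$ — bearing in mind that the elements of $\mathfrak H$ are equivalence classes of pairs of functions.
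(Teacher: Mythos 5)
Your route is genuinely different from the paper's. The paper never writes down the Kre\u\i n-type correction $(A_{\alpha\beta}-zI)^{-1}=(L-zI)^{-1}-\gamma(z)\bigl(I-\Theta_B(z)^{-1}\bigr)\Gamma_0(L-zI)^{-1}$ in the model space; instead it applies $\mathscr{F}_\pm$ to both sides of the claimed identity and compares the outcomes, using \eqref{eq:f+on-resolvent}, \eqref{eq:f-on-resolvent} for the left-hand side and \eqref{eq:pk-action} together with Lemma~\ref{lemma61} for the right-hand side. Your decomposition of the resolvent into $(L-zI)^{-1}$ plus a defect term in $\ker(A-zI)$, transferred to $\mathfrak{H}$ term by term, is conceptually transparent, and the individual ingredients (the correction formula, $\Phi(\mathcal{H})=K$, the trace identity $\Gamma_0(L-zI)^{-1}\Phi^{*}\binom{\widetilde g}{g}=-\sqrt{\pi}\,(\widetilde g+S^{*}g)(z)$, the fact that $e\in\dom(\Lambda)$ for bounded $B$, and $P_K(\cdot-z)^{-1}\binom{0}{c}=(\cdot-z)^{-1}\binom{-S^{*}(\cc z)c}{c}$) are all sound.

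However, the final matching does not close as written, and the parenthetical ``the overall sign being fixed by the conventions of \eqref{theta_def}'' conceals a genuine obstruction rather than a bookkeeping chore. Executed literally, your closed form $\Phi\gamma(z)e=\tfrac{\I}{2\sqrt{\pi}}(\cdot-z)^{-1}\binom{(M(z)+\I I)e}{-(M(z)-\I I)e}$ gives $c=-\tfrac{\I}{2\sqrt{\pi}}(M(z)-\I I)e$; inserting $e=\bigl(\overline{B+M(z)}\bigr)^{-1}(B+\I I)\Gamma_0(L-zI)^{-1}h$ and $\Gamma_0(L-zI)^{-1}h=-\sqrt{\pi}(\widetilde g+S^{*}g)(z)$, and using $(M(z)-\I I)\bigl(\overline{B+M(z)}\bigr)^{-1}(B+\I I)=(B+\I I)\bigl(\overline{B+M(z)}\bigr)^{-1}(M(z)-\I I)$, yields $c=-\tfrac{1}{2\I}(B+\I I)\bigl(\overline{B+M(z)}\bigr)^{-1}(M(z)-\I I)(\widetilde g+S^{*}g)(z)=-\chi_B^{+}\Theta_B(z)^{-1}(\widetilde g+S^{*}g)(z)$ --- the opposite sign to the statement. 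The root cause is that your two inputs are mutually inconsistent: the trace identity rests on the intertwining \eqref{eq:phi-f-pm}, while the closed form for $\Phi\gamma(z)e$ rests on the displayed formula for $\Phi$ in Lemma~\ref{lem:f-pm-phi}; a direct computation from \eqref{eq:Gamma-0-L-through-M-gamma} and \eqref{eq:difference-m-functions} gives $\Gamma_0(L-\zeta I)^{-1}\gamma(z)(M(z)-\I I)^{-1}=\tfrac{1}{2\I}(\zeta-z)^{-1}\bigl[S^{*}(\cc{\zeta})-S^{*}(\cc{z})\bigr]$, which is the negative of \eqref{eq:to-satisfy-for-w-} (compare the second line of \eqref{eq:auxiliar-difference-m} with $(M(z)-\I I)^{-1}-(M(z_-)-\I I)^{-1}=(M(z)-\I I)^{-1}\bigl[M(z_-)-M(z)\bigr](M(z_-)-\I I)^{-1}$). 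To make your argument close, determine $\Phi\gamma(z)e$ as an element of $\mathfrak{H}$ by the same route as the trace identity: compute $(I\;\;S^{*})\Phi\gamma(z)e=\mathscr{F}_+\gamma(z)e$ and $(S\;\;I)\Phi\gamma(z)e=\mathscr{F}_-\gamma(z)e$ from Definition~\ref{def:F-mappings} and the displayed formula above (these two images determine the equivalence class in $\mathfrak{H}$); this produces $c=-\tfrac{1}{2\I\sqrt{\pi}}(M(z)-\I I)e=+\chi_B^{+}\Theta_B(z)^{-1}(\widetilde g+S^{*}g)(z)$ and hence the claimed formula.
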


\begin{proof}

We prove (i).
The proof of (ii)
is carried out along the same lines.
For this
one should
establish the validity of the identities:
\begin{equation}
 \label{eq:validating-equalities}
  \mathscr{F}_\pm(A_{\alpha\beta}-z I)^{-1}\Phi^{-1}\left(\begin{matrix}\widetilde{g}\\[0.35em]
  g\end{matrix}\right)
=\mathscr{F}_\pm\Phi^{-1}P_K\frac{1}{\cdot-z}
\left(\begin{matrix}\widetilde{g}\\[0.4em]
g-\chi_B^+\Theta_B(z)^{-1}(\widetilde{g}+S^*g)(z)\end{matrix}\right),\qquad z\in\complex_-\cap{\mathcal Q}_{B}.
\end{equation}
First we compute the left-hand-side of
\eqref{eq:validating-equalities}.
It follows from Lemma~\ref{lem:ryzhov-formulae}
that for
$z,\lambda\in\complex_-\cap {\mathcal Q}_{B},$ $h\in\mathcal{H}$ one has
\begin{align*}
 \Gamma_0(L-zI)^{-1}(A_{\alpha\beta}-\lambda I)^{-1}h
&
=\Theta_{B}(z)\Gamma_0(A_{\alpha\beta} -z
I)^{-1}(A_{\alpha\beta}-\lambda I)^{-1}h\\[0.3em]
&=\frac{1}{z-\lambda}\Theta_{B}(z)\Gamma_0
\left[(A_{\alpha\beta}-z I)^{-1}-(A_{\alpha\beta}-\lambda I)^{-1}\right]h
\\[0.3em]
&
=\frac{1}{z-\lambda}\left[\Gamma_0(L-zI)^{-1}-
\Theta_{B}(z)\Gamma_0(A_{\alpha\beta}-\lambda
I)^{-1}\right]h\\[0.3em]
&=\frac{1}{z-\lambda}\left[\Gamma_0(L-zI)^{-1}-
\Theta_{B}(z)\Theta_{B}(\lambda)^{-1}\Gamma_0(L-\lambda I)^{-1}\right]h\,.
\end{align*}
Letting $z=k-\I\epsilon,$ $k\in\reals$, it follows from the above
calculation that
\begin{equation}
\begin{aligned}
  &\lim_{\epsilon\searrow 0}\Gamma_0(L-(k-\I\epsilon)I)^{-1}
(A_{\alpha\beta}-\lambda I)^{-1}h
\\[0.4em]
&=\lim_{\epsilon\searrow 0}
\frac{1}{(k-\I\epsilon)-\lambda}\left[\Gamma_0(L-(k-\I\epsilon)I)^{-1}-
\Theta_{B}(k-\I\epsilon)\Theta_{B}(\lambda)^{-1}\Gamma_0(L-\lambda I)^{-1}\right]h\,.
\end{aligned}
\label{lasteq}
\end{equation}
Combining the expression for $\mathscr{F}_+$ from Definition \ref{def:F-mappings} with (\ref{lasteq}) yields
\begin{equation*}
  \mathscr{F}_+(A_{\alpha\beta}-\lambda I)^{-1}h=
 \frac{1}{\cdot-\lambda}\left[(\mathscr{F}_+h)(\cdot)-
\Theta_{B}(\cdot)\Theta_{B}(\lambda)^{-1}\mathscr{F}_+h(\lambda)
\right]\,.
\end{equation*}
Hence, in view of the identity $\mathscr{F}_+ h=\widetilde{g}+S^*g,$ which follows from
\eqref{eq:v-in-G}, we obtain
\begin{equation}
\label{eq:f+on-resolvent}
  \mathscr{F}_+(A_{\alpha\beta}-\lambda I)^{-1}\Phi^{-1}\binom{\widetilde{g}}{g}=
 \frac{1}{\cdot-\lambda}\left[
 (\widetilde{g}+S^*g)(\cdot)-\Theta_{B}(\cdot)\Theta_{B}(\lambda)^{-1}(\widetilde{g}+S^*g)(\lambda)\right]\,.
\end{equation}

On the basis of Lemma~\ref{lem:ryzhov-formulae}
and reasoning in the same fashion as was done to write (\ref{eq:f+on-resolvent}), one verifies
\begin{equation}
  \label{eq:f-on-resolvent}
  \mathscr{F}_-(A_{\alpha\beta}-\lambda I)^{-1}\Phi^{-1}\left(\begin{matrix}\widetilde{g}\\[0.35em]g\end{matrix}\right)=
 \frac{1}{\cdot-\lambda}\bigl[
 (S\widetilde{g}+g)(\cdot)-\widehat{\Theta}_{B}(\cdot){\Theta}_{B}(\lambda)^{-1}(\widetilde{g}+S^*g)(\lambda)\bigr]\,.
\end{equation}

Let us focus on the right hand side of
\eqref{eq:validating-equalities}. Note that
\begin{align}
  &P_K\dfrac{1}{\cdot-z}
\left(\begin{array}{c}\widetilde{g}\\[0.5em]
g-\chi_{B}^+\Theta_{B}(z)^{-1}(\widetilde{g}+S^*g)(z)\end{array}\right)\nonumber\\[3mm]
 &=\left(\begin{array}{c}\dfrac{\widetilde{g}}{\cdot-z}-P_+\dfrac{1}{\cdot-z}
 \bigl[(\widetilde{g}+S^*g)(\cdot)-S^*\chi_{B}^+\Theta_{B}(z)^{-1}(\widetilde{g}+S^*g)(z)\bigr]
  \\[0.9em]\dfrac{1}{\cdot-z}\bigl(g-\chi_{B}^+\Theta_{B}(z)^{-1}(\widetilde{g}+S^*g)(z)\bigr)-P_-\dfrac{1}{\cdot-z}\bigl[(S\widetilde{g}+g)(\cdot)-\chi_{B}^+\Theta_{B}(z)^{-1}(\widetilde{g}+S^*g)(z)\bigr]\end{array}\right)\nonumber\\[4mm]
&=\dfrac{1}{\cdot-z}\left(\begin{array}{c}\widetilde{g}-(\widetilde{g}+S^*g)(z) +
S^*(\cc{z})\chi_{B}^+\Theta_{B}(z)^{-1}(\widetilde{g}+S^*g)(z)\\[0.9em]
g-\chi_{B}^+\Theta_{B}(z)^{-1}(\widetilde{g}+S^*g)(z)\end{array}\right),
\label{eq:rhs}
\end{align}
where (\ref{eq:pk-action}) is used in the first equality and in the
second the fact that if $f\in {H}_-^2(\mathcal{E})$, then, for all $z\in\complex_-$,
\begin{equation*}
  P_+\frac{f(\cdot)}{\cdot-z}=P_+\left(\frac{f(\cdot)+f(z)-f(z)}{\cdot-z}\right)=P_+\frac{f(z)}{\cdot-z}=\frac{f(z)}{\cdot-z}\,.
\end{equation*}

Now, apply $\mathscr{F}_+\Phi^{-1}$ to
(\ref{eq:rhs}) taking into account that $\mathscr{F}_+ h=\widetilde{g}+S^*g$ once again:
\begin{equation}
\begin{aligned}
\mathscr{F}_+\Phi^{-1}\frac{1}{\cdot-z}&\left(\begin{matrix}\widetilde{g}-(\widetilde{g}+S^*g)(z) +
S^*(\overline{z})\chi_{B}^+\Theta_{B}(z)^{-1}(\widetilde{g}+S^*g)(z)\\[0.7em]
g-\chi_{B}^+\Theta_{B}(z)^{-1}(\widetilde{g}+S^*g)(z)\end{matrix}\right)
\\[0.5em]
&=\frac{1}{\cdot-z}\bigl[(\widetilde{g}+S^*g)(\cdot)-(\widetilde{g}+S^*g)(z)+
(S^*(\overline{z})-S^*)\chi_{B}^+\Theta_{B}(z)^{-1}(\widetilde{g}+S^*g)(z)\bigr]
\\[0.5em]
&=\frac{1}{\cdot-z}\bigl[(\widetilde{g}+S^*g)(\cdot)-
\bigl(\Theta_{B}(z)-(S^*(\overline{z})-S^*)\chi_{B}^+\bigr)\Theta_{B}(z)^{-1}(\widetilde{g}+S^*g)(z)\bigr]\\[0.5em]
&=\frac{1}{\cdot-z}\bigl[(\widetilde{g}+S^*g)(\cdot)-\Theta_B(\cdot)\Theta_{B}(z)^{-1}(\widetilde{g}+S^*g)(z)\bigr],
\end{aligned}
\label{Fcomp}
\end{equation}
where for the last equality we have used Lemma \ref{lemma61}. By combining (\ref{Fcomp}) with
 (\ref{eq:f+on-resolvent}), we establish the first identity in (\ref{eq:validating-equalities}).

Finally, applying $\mathscr{F}_-\Phi^{-1}$ to
(\ref{eq:rhs}) and using the identity $\mathscr{F}_- h=S\widetilde{g}+g,$
we obtain
\begingroup
\allowdisplaybreaks
\begin{equation*}
\begin{aligned}
\mathscr{F}_-\Phi^{-1}&\frac{1}{\cdot-z}\left(\begin{array}{c}\widetilde{g}-(\widetilde{g}+S^*g)(z) +
S^*(\overline{z})\chi_{B}^+\Theta_{B}(z)^{-1}(\widetilde{g}+S^*g)(z)\\[0.8em]
g-\chi_{B}^+\Theta_{B}(z)^{-1}(\widetilde{g}+S^*g)(z)\end{array}\right)\\[0.6em]
&=\frac{1}{\cdot-z}\bigl[(S\widetilde{g}+g)(\cdot)-S(\widetilde{g}+S^*g)(z)-
\bigl(I-SS^*(\overline{z})\bigr)\chi_{B}^+\Theta_{B}(z)^{-1}(\widetilde{g}+S^*g)(z)\bigr]\\[0.3em]
&=\frac{1}{\cdot-z}\bigl[(S\widetilde{g}+g)(\cdot)-
\bigl(S\Theta_{B}(z)+\chi_{B}^+-SS^*(\overline{z})\chi_{B}^+\bigr)\Theta_{B}(z)^{-1}(\widetilde{g}+S^*g)(z)\bigr]\\[0.3em]
&=\frac{1}{\cdot-z}\bigl[(S\widetilde{g}+g)(\cdot)-
(S\chi_{B}^-+\chi_{B}^+)\Theta_{B}(z)^{-1}(\widetilde{g}+S^*g)(z)\bigr]
\\[0.3em]
&
=\frac{1}{\cdot-z}\bigl[(S\widetilde{g}+g)(\cdot)-\widehat{\Theta}_{B}(\cdot)\Theta_{B}(z)^{-1}(\widetilde{g}+S^*g)(z)\bigr],
\end{aligned}
\end{equation*}
\endgroup
where in the last two equalities we use Lemma \ref{lemma61}. Comparing this with
(\ref{eq:f-on-resolvent}), we arrive at the second identity in
(\ref{eq:validating-equalities}).
\end{proof}

\section{Application: a unitary equivalent model of an operator associated with BVP in a space with reproducing kernel}\label{sec:example}

In the present section we demonstrate that in the setting of operators of BVP, the results of Section 4 lead to the representation of $(L^*-zI)^{-1}$ as the Toeplitz operator $P_S\bigl(f(\cdot)(\cdot-z)^{-1}\bigr)\vert_{K_S},$ where $P_S$ is the orthogonal projection of $H^2_+({\mathcal E})$ onto $K_S:=H^2_+({\mathcal E})\ominus SH^2_+({\mathcal E}).$ Thus this results of Section 6 can be used to represent the resolvent of $A_{\alpha\beta}$ as a ``triangular'' perturbation of the aforementioned Toeplitz operator.

Throughout the section we assume that the condition imposed by Lemma \ref{B_condition} holds, the operator $B$ is bounded and that the operator $\widetilde{A}$ is simple.

The following proposition carries over together with its proof from \cite{KisNab}.

\begin{proposition}[\cite{KisNab}]
If the operator-function $S(z)$ is inner (which implies that its boundary values $S(k)$ are almost everywhere unitary on $\mathbb{R}$, see \cite{MR2760647}), then the Hilbert space $\mathcal{H}$ is unitary equivalent to the spaces $K_S:=H^2_+({\mathcal E})\ominus SH^2_+({\mathcal E})$ and
$K_S^\dagger:=H^2_-({\mathcal E})\ominus S^*H^2_-({\mathcal E}).$ Moreover, the unitary transformations of $\mathcal{H}$ to $K_S$ and $K_S^\dagger$ are given explicitly as the restrictions of the operators $\mathcal{F}_\mp$ of Definition \ref{def:F-mappings}, respectively, to the spaces $(0,\mathcal{H},0)$. For the spaces $K_S$ and $K_S^\dagger$ one additionally has the element-wise equality $S^*K_S=K_S^\dagger$.
\end{proposition}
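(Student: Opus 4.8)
The plan is to deduce the statement from the unitary $\Phi\colon\mathscr{H}\to\mathfrak{H}$ of Lemma~\ref{lem:phi-on-to}, the intertwining identity of Lemma~\ref{lem:f-pm-phi}, and the degeneration of the $\mathfrak{H}$-metric that occurs when $S$ is inner, following \cite{KisNab}. First I would record that $\Phi$ restricts to a unitary map of $\mathcal{H}$ (identified with $\{0\}\oplus\mathcal{H}\oplus\{0\}$) onto $K$: by Lemma~\ref{lem:isometry-Ds} the unitary $\Phi$ carries $L^2(\reals_-,\mathcal{E})\oplus\{0\}\oplus L^2(\reals_+,\mathcal{E})$ onto $\mathfrak{D}_-\oplus\mathfrak{D}_+$, so it carries the orthogonal complement $\{0\}\oplus\mathcal{H}\oplus\{0\}$ onto $K=\mathfrak{H}\ominus(\mathfrak{D}_-\oplus\mathfrak{D}_+)$. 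Next I would introduce the two maps $\Psi_\pm\colon\mathfrak{H}\to L^2(\reals,\mathcal{E})$ acting on genuine two-component functions by $\Psi_+\binom{\widetilde g}{g}=\widetilde g+S^{*}g$ and $\Psi_-\binom{\widetilde g}{g}=S\widetilde g+g$; by the two inequalities stated immediately after the definition of $\mathfrak{H}$ these extend to well-defined contractions on all of $\mathfrak{H}$, and Lemma~\ref{lem:f-pm-phi}, once extended by continuity to $\mathscr{H}$ (both sides being bounded operators into $L^2(\reals,\mathcal{E})$), reads exactly $\mathscr{F}_+=\Psi_+\Phi$ and $\mathscr{F}_-=\Psi_-\Phi$.

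The crucial point is that inner-ness of $S$, that is, the a.e.\ unitarity of $S(k)$ on $\reals$, promotes $\Psi_\pm$ to \emph{unitary} maps. Completing the square in the quadratic form defining the $\mathfrak{H}$-norm and using $S(k)^{*}S(k)=S(k)S(k)^{*}=I$ for a.e.\ $k$ yields
\[
\Bigl\|\binom{\widetilde g}{g}\Bigr\|_{\mathfrak{H}}^{2}
=\bigl\|\widetilde g+S^{*}g\bigr\|_{L^2(\reals,\mathcal{E})}^{2}
=\bigl\|S\widetilde g+g\bigr\|_{L^2(\reals,\mathcal{E})}^{2},
\]
so $\Psi_\pm$ are isometric on $\mathfrak{H}$; they are onto because $\Psi_+$ already sends $\binom{f}{0}$ — an element of $\mathfrak{H}$ of norm $\|f\|_{L^2(\reals,\mathcal{E})}$ — to $f$, and likewise $\Psi_-$ sends $\binom{0}{f}$ to $f$. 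Hence $\Psi_\pm$ are unitary from $\mathfrak{H}$ onto $L^2(\reals,\mathcal{E})$ and, being unitary, carry orthogonal subspaces to orthogonal subspaces.

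Now I would compute the images of the summands of $\mathfrak{H}=\mathfrak{D}_-\oplus K\oplus\mathfrak{D}_+$. From \eqref{eq:D-spaces} one reads off $\Psi_-(\mathfrak{D}_-)=H^2_-(\mathcal{E})$, $\Psi_-(\mathfrak{D}_+)=SH^2_+(\mathcal{E})$, $\Psi_+(\mathfrak{D}_-)=S^{*}H^2_-(\mathcal{E})$ and $\Psi_+(\mathfrak{D}_+)=H^2_+(\mathcal{E})$. Applying the unitary $\Psi_-$ to the orthogonal decomposition of $\mathfrak{H}$ gives $L^2(\reals,\mathcal{E})=H^2_-(\mathcal{E})\oplus\Psi_-(K)\oplus SH^2_+(\mathcal{E})$, whence $\Psi_-(K)=H^2_+(\mathcal{E})\ominus SH^2_+(\mathcal{E})=K_S$; in the same way $\Psi_+(K)=H^2_-(\mathcal{E})\ominus S^{*}H^2_-(\mathcal{E})=K_S^{\dagger}$. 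Combining with the first paragraph, $\mathscr{F}_-$ restricted to $(0,\mathcal{H},0)$ equals $(\Psi_-|_K)\circ(\Phi|_{(0,\mathcal{H},0)})$ and $\mathscr{F}_+$ restricted to $(0,\mathcal{H},0)$ equals $(\Psi_+|_K)\circ(\Phi|_{(0,\mathcal{H},0)})$, each a composition of two unitaries, hence a unitary map of $\mathcal{H}$ onto $K_S$, respectively $K_S^{\dagger}$. Finally, for $\binom{\widetilde g}{g}\in K$ one has $S^{*}\Psi_-\binom{\widetilde g}{g}=S^{*}(S\widetilde g+g)=\widetilde g+S^{*}g=\Psi_+\binom{\widetilde g}{g}$ using $S^{*}S=I$ a.e.; passing to images over $K$ gives $S^{*}K_S=\Psi_+(K)=K_S^{\dagger}$.

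I expect essentially all of the weight of the argument to rest on the displayed norm identity: this is the unique place where inner-ness is genuinely used, and for non-inner $S$ the maps $\Psi_\pm$ are only contractions and $K$ need not be isometric to a Hardy-type model subspace. A secondary point to handle with care is that $\widetilde g$ and $g$ are not, in general, independent $L^2$ functions, so the argument must be run with the equivalence classes in $\mathfrak{H}$ and the well-defined maps $\Psi_\pm$ on them, rather than by manipulating $\widetilde g$ and $g$ in isolation — precisely why the norm identity, which holds at the level of the full $\mathfrak{H}$-norm, is the appropriate tool.
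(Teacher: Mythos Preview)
Your argument is correct and is precisely the approach the paper has in mind: the paper itself does not spell out a proof but states that ``the following proposition carries over together with its proof from \cite{KisNab}'', and what you have written is that proof, recast in the paper's own notation and resting on Lemmata~\ref{lem:f-pm-phi}, \ref{lem:isometry-Ds}, \ref{lem:isometry-phi} and \ref{lem:phi-on-to}. The identification $\mathscr{F}_\pm=\Psi_\pm\Phi$ via \eqref{eq:phi-f-pm}, the collapse of the $\mathfrak{H}$-metric to $\|\widetilde g+S^{*}g\|^{2}=\|S\widetilde g+g\|^{2}$ under the inner hypothesis, and the ensuing computation of $\Psi_\pm(\mathfrak{D}_\mp)$ to read off $\Psi_-(K)=K_S$, $\Psi_+(K)=K_S^{\dagger}$ are exactly the steps intended.
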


\begin{remark}
It can be verified that the characteristic function $S$ is indeed inner if the spectrum of the operator $L$ is discrete. The latter is satisfied by the Kre\u\i n resolvent formula, provided that the conditions of Lemma \ref{B_condition} hold and the operator of the BVP with Dirichlet conditions has discrete spectrum, the latter being the case under minimal regularity conditions; however, see, {\it e.g.,} the discussion in \cite{Marletta} and references therein.
\end{remark}

The formula \eqref{eq:f+on-resolvent} applied to the operator $L$ and a similar computation in relation to the operator $L^*$ now yield the following result.
\begin{theorem}
The operator $(L-zI)^{-1}$ for $z\in\mathbb{C}_-$ is unitary equivalent to the Toeplitz operator $f\mapsto P_S^\dagger\bigl(f(\cdot)(\cdot -z)^{-1}\bigr)$ in the space $K_S^\dagger$; the operator $(L^*-zI)^{-1}$ for $z\in\mathbb{C}_+$ is unitary equivalent to the Toeplitz operator $f\mapsto P_S\bigl(f(\cdot)(\cdot -z)^{-1}\bigr)$ in the space $K_S$. Here $P_S^\dagger$ and $P_S$ are orthogonal projections onto $K_S^\dagger$ and $K_S$, respectively:
$$
P_S \frac{f(\cdot)}{\cdot-z}=P_+ \frac{f(\cdot)}{\cdot-z}=\frac{f(\cdot)-f(z)}{\cdot-z}, \quad z\in\mathbb{C_+},
$$
$$
P_S^\dagger \frac{f(\cdot)}{\cdot-z}=P_- \frac{f(\cdot)}{\cdot-z}=\frac{f(\cdot)-f(z)}{\cdot-z}, \quad z\in\mathbb{C_-},
$$
where $P_+,$ $P_-$ are orthogonal projections onto Hardy classes $H_+^2(\mathcal E),$ $H_-^2(\mathcal E)$, respectively.
\end{theorem}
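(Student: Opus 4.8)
The plan is to reduce the two-component model resolvent formula \eqref{eq:f+on-resolvent} to a scalar Hardy-space picture by specialising the boundary parameter to $B=\pm\I I$ and then transporting everything through the unitary identifications of the Proposition of \cite{KisNab}, which apply here because under the standing assumptions of this section the characteristic function $S$ is inner (see the discussion above).

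First I would specialise \eqref{eq:f+on-resolvent} to the operator $L$ itself. Since $L=A_{-\I I\,I}$ by \eqref{eq:operator-L}, one has $B=-\I I$, hence $\chi_B^+=0$ and $\chi_B^-=I$, so Lemma~\ref{lemma61} gives $\Theta_B(z)\equiv I$; this is legitimate for every $z\in\complex_-$ since $\complex_-\subset\mathcal{Q}_B$ (the operator $L$ being maximal dissipative and boundedly invertible). Inserting $\Theta_B\equiv I$ into \eqref{eq:f+on-resolvent}, and using that $\Phi$ restricts to a unitary of $\mathcal{H}$ onto $K$ so that every $h\in\mathcal{H}$ is of the form $\Phi^{-1}$ of a $K$-vector, we obtain, for all $h\in\mathcal{H}$ and $z\in\complex_-$,
\[
\mathscr{F}_+(L-zI)^{-1}h=\frac{(\mathscr{F}_+h)(\cdot)-(\mathscr{F}_+h)(z)}{\cdot-z},
\]
where $\mathscr{F}_+h$ abbreviates $\mathscr{F}_+(0,h,0)$. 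By the Proposition of \cite{KisNab}, the restriction of $\mathscr{F}_+$ to $\{0\}\oplus\mathcal{H}\oplus\{0\}$ is unitary from $\mathcal{H}$ onto $K_S^\dagger$; writing $f:=\mathscr{F}_+h$, the last display says precisely that $(L-zI)^{-1}$ is carried by this unitary to the map $f\mapsto (f(\cdot)-f(z))(\cdot-z)^{-1}$ on $K_S^\dagger$. In particular, as the left-hand side lies in $\mathcal{H}$, it follows that $(f(\cdot)-f(z))(\cdot-z)^{-1}\in K_S^\dagger$ for every $f\in K_S^\dagger$ and $z\in\complex_-$.

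It then remains to recognise this operator as the Toeplitz operator $f\mapsto P_S^\dagger\bigl(f(\cdot)(\cdot-z)^{-1}\bigr)$ and, along the way, to verify the projection formula in the statement. For $f\in K_S^\dagger\subset H^2_-(\mathcal{E})$ and $z\in\complex_-$ I would split $f(\cdot)(\cdot-z)^{-1}=(f(\cdot)-f(z))(\cdot-z)^{-1}+f(z)(\cdot-z)^{-1}$: the second summand is analytic in $\complex_+$, hence belongs to $H^2_+(\mathcal{E})$, which gives $P_-\bigl(f(\cdot)(\cdot-z)^{-1}\bigr)=(f(\cdot)-f(z))(\cdot-z)^{-1}$; moreover this same summand is orthogonal to $H^2_-(\mathcal{E})\supset K_S^\dagger$ while the first summand lies in $K_S^\dagger$ by the preceding step, so $P_S^\dagger$ agrees with $P_-$ here and both return $(f(\cdot)-f(z))(\cdot-z)^{-1}$. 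This establishes the claim for $(L-zI)^{-1}$, $z\in\complex_-$. The statement about $(L^*-zI)^{-1}$, $z\in\complex_+$, is the mirror image: $L^*=A_{\I I\,I}$, so $B=\I I$, $\chi_B^+=I$, $\chi_B^-=0$ and $\widehat{\Theta}_B(z)\equiv I$ on $\complex_+$ by Lemma~\ref{lemma61}; the $\complex_+$-counterpart of \eqref{eq:f+on-resolvent} (obtained from \eqref{eq:kappa-ii+} of Lemma~\ref{lem:ryzhov-formulae} exactly as \eqref{eq:f-on-resolvent}--\eqref{eq:f+on-resolvent} were) gives $\mathscr{F}_-(L^*-zI)^{-1}h=(\mathscr{F}_-h(\cdot)-\mathscr{F}_-h(z))(\cdot-z)^{-1}$; the Proposition identifies $\mathscr{F}_-\vert_{\{0\}\oplus\mathcal{H}\oplus\{0\}}$ unitarily with $\mathcal{H}\to K_S$; and since for $z\in\complex_+$ the term $f(z)(\cdot-z)^{-1}$ lies in $H^2_-(\mathcal{E})$, the same splitting yields $P_+\bigl(f(\cdot)(\cdot-z)^{-1}\bigr)=P_S\bigl(f(\cdot)(\cdot-z)^{-1}\bigr)=(f(\cdot)-f(z))(\cdot-z)^{-1}$.

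The heart of the argument is thus nothing more than the specialisations $\Theta_B\equiv I$, $\widehat{\Theta}_B\equiv I$ combined with the unitary identifications of \cite{KisNab}; I expect the only delicate point to be organisational, namely keeping the correspondence between $\mathscr{F}_\pm$, the two model subspaces $K_S$, $K_S^\dagger$ and the half-planes $\complex_\pm$ consistent, and in particular making sure the $\complex_+$-version of \eqref{eq:f+on-resolvent} used for $L^*$ is derived with the correct signs, since only the $\complex_-$-versions are displayed above. Everything else reduces to the one-line Hardy-space projection identity, used twice.
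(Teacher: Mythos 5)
Your proposal is correct and follows essentially the same route as the paper, which proves the theorem precisely by applying the resolvent formula \eqref{eq:f+on-resolvent} (and its $\complex_+$ counterpart) to $L$ and $L^*$, i.e.\ to $B=\mp\I I$, where $\chi_B^{\pm}$ degenerate so that $\Theta_B\equiv I$ and $\widehat{\Theta}_B\equiv I$, and then invoking the unitary identifications $\mathscr{F}_{\pm}\vert_{(0,\mathcal{H},0)}:\mathcal{H}\to K_S^{\dagger},K_S$ from \cite{KisNab}. Your additional verification of the Hardy-space projection identity and of the consistency of $P_S^{\dagger}$ with $P_-$ (and $P_S$ with $P_+$) on the relevant vectors is a correct spelling-out of what the paper leaves implicit.
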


For the operators of BVPs defined by different boundary conditions parameterised by the operator $B$, including self-adjoint ones, a similar argument yields the following representation.

\begin{theorem}
The operator $(A_{\alpha\beta}-z)^{-1}$ for $z\in\mathbb{C}_-\cap\rho(A_{\alpha\beta})$ is unitary equivalent to a ``triangular'' perturbation of the Toeplitz operator $f\mapsto P_S^\dagger\bigl(f(\cdot)(\cdot -z)^{-1}\bigr)$ in the space $K_S^\dagger$, namely, to the operator
$$
f\mapsto P_- \frac{f(\cdot)}{\cdot-z}  - P_-   \Theta_B(\cdot) \Theta_B(z)^{-1} P_+ \frac{f(\cdot)}{\cdot-z}.
$$
For $z\in\mathbb{C}_+\cap\rho(A_{\alpha\beta})$ the resolvent $(A_{\alpha\beta}-z)^{-1}$ is unitary equivalent to the operator
$$
f\mapsto P_+ \frac{f(\cdot)}{\cdot-z}  - P_+   \widehat{\Theta}_B(\cdot) \widehat{\Theta}_B(z)^{-1} P_- \frac{f(\cdot)}{\cdot-z}
$$
in the space $K_S$.
\end{theorem}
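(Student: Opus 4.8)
The plan is to transport the resolvent representation obtained in Section~\ref{model_sec} from the two-component model space $\mathfrak H$ onto the reproducing-kernel spaces $K_S$, $K_S^\dagger$, by means of the unitary maps furnished by the Proposition of \cite{KisNab}. Under the standing assumptions of this section the characteristic function $S$ is inner (as noted in the preceding Remark, the spectrum of $L$ is discrete), so that Proposition applies and the restrictions to $(0,\mathcal H,0)$ of the mappings $\mathscr{F}_-$, $\mathscr{F}_+$ of Definition~\ref{def:F-mappings} are unitary operators from $\mathcal H$ onto $K_S$ and $K_S^\dagger$, respectively. First I would fix $z\in\complex_-\cap\rho(A_{\alpha\beta})$; by Theorem~\ref{conditional_lemma} this set coincides with $\complex_-\cap{\mathcal Q}_B$, so that \eqref{eq:f+on-resolvent} is available at $z$. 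Identifying $f\in K_S^\dagger$ with $\widetilde g+S^*g$, where $\binom{\widetilde g}{g}\in K$ (cf.\ \eqref{eq:v-in-G} together with the cited Proposition, which make $\binom{\widetilde g}{g}\mapsto\widetilde g+S^*g$ a unitary of $K$ onto $K_S^\dagger$), formula \eqref{eq:f+on-resolvent} with $\lambda=z$ says that the unitary image of $(A_{\alpha\beta}-zI)^{-1}$ in $K_S^\dagger$ is the operator $f\mapsto(\cdot-z)^{-1}\bigl[f(\cdot)-\Theta_B(\cdot)\Theta_B(z)^{-1}f(z)\bigr]$.

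It then remains to recast this expression in the asserted ``triangular'' form. Since $f\in H^2_-(\mathcal E)$ and $z\in\complex_-$, the Cauchy-type splitting of $f(\cdot)/(\cdot-z)$ yields $P_-\bigl(f(\cdot)/(\cdot-z)\bigr)=\bigl(f(\cdot)-f(z)\bigr)/(\cdot-z)$ and $P_+\bigl(f(\cdot)/(\cdot-z)\bigr)=f(z)/(\cdot-z)$, so the identity to be proved reduces, after setting $c:=\Theta_B(z)^{-1}f(z)$, to $P_+\bigl(\Theta_B(\cdot)c/(\cdot-z)\bigr)=\Theta_B(z)c/(\cdot-z)$. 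I would verify this by inserting the explicit expressions $\Theta_B(\cdot)=S^*(\cdot)\chi_B^++\chi_B^-$ and $\Theta_B(z)=S^*(\overline z)\chi_B^++\chi_B^-$ from \eqref{thetaprop} of Lemma~\ref{lemma61}: the summand $\chi_B^-c/(\cdot-z)$ already lies in $H^2_+(\mathcal E)$, while, $S^*$ continuing analytically into the lower half-plane, subtracting and adding $S^*(\overline z)\chi_B^+c/(\cdot-z)$ exhibits $S^*(\cdot)\chi_B^+c/(\cdot-z)$ as the sum of an $H^2_-(\mathcal E)$ function and $S^*(\overline z)\chi_B^+c/(\cdot-z)\in H^2_+(\mathcal E)$, so that $P_+\bigl(S^*(\cdot)\chi_B^+c/(\cdot-z)\bigr)=S^*(\overline z)\chi_B^+c/(\cdot-z)$. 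Adding the two contributions produces $\Theta_B(z)c/(\cdot-z)$, which gives the first assertion.

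The case $z\in\complex_+\cap\rho(A_{\alpha\beta})=\complex_+\cap{\mathcal Q}_B$ is handled in exactly the same fashion, now through $\mathscr{F}_-\vert_{(0,\mathcal H,0)}:\mathcal H\to K_S$, the $\complex_+$-counterpart of \eqref{eq:f+on-resolvent} (the $\widehat\Theta_B$-version, obtained from \eqref{eq:kappa-ii+} just as \eqref{eq:f+on-resolvent} and \eqref{eq:f-on-resolvent} were obtained from \eqref{eq:ii-kappa}), the identity $\widehat\Theta_B(\cdot)=S(\cdot)\chi_B^-+\chi_B^+$ of \eqref{thetahatprop}, and the analyticity of $S$ into the upper half-plane; one then arrives at $f\mapsto P_+\bigl(f(\cdot)/(\cdot-z)\bigr)-P_+\widehat\Theta_B(\cdot)\widehat\Theta_B(z)^{-1}P_-\bigl(f(\cdot)/(\cdot-z)\bigr)$ on $K_S$. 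I do not anticipate a deep obstacle: the structural content of the statement is already carried by the cited Proposition (which collapses $\mathfrak H$ onto $K_S$, resp.\ $K_S^\dagger$, precisely because $S$ is inner) together with the previously established formula \eqref{eq:f+on-resolvent}. The step that requires genuine care is the Hardy-class bookkeeping in the projection identity of the second paragraph --- one must keep straight that $1/(\cdot-z)$ with $z\in\complex_-$ is an $H^2_+$ reproducing-kernel atom, and that $S^*$ (resp.\ $S$, in the $\complex_+$ case) extends analytically into the lower (resp.\ upper) half-plane --- after which everything is a routine bounded-operator manipulation.
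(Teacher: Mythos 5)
Your argument is correct and is essentially the paper's own: the paper disposes of this theorem with ``a similar argument'' resting on formula \eqref{eq:f+on-resolvent} and the unitaries $\mathscr{F}_\pm\vert_{(0,\mathcal{H},0)}$ from the cited Proposition, which is exactly the route you take. The Hardy-class projection identity $P_+\bigl(\Theta_B(\cdot)c/(\cdot-z)\bigr)=\Theta_B(z)c/(\cdot-z)$ is the only step the paper leaves implicit, and your verification of it via \eqref{thetaprop} and the analytic continuation of $S^*$ into $\complex_-$ is sound.
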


\begin{remark}
It is rather well-known that the spaces $K_S$  and $K_S^\dagger$ are Hilbert spaces with reproducing kernels, closely linked to the corresponding de Branges spaces in the ``scalar'' case of $\dim \mathcal E=1$. We refer the reader to the book \cite{Nikolski} for an in-depth survey of the subject area and of the related developments in modern complex analysis. The applications of the latter Theorem to the direct and inverse spectral problems of operators of BVPs is outside the scope of the present paper and will be dwelt upon elsewhere.
\end{remark}

\section*{Acknowledgements}


KDC is grateful for the financial support of
the Engineering and Physical Sciences Research Council: Grant
EP/L018802/2 ``Mathematical foundations of metamaterials:
homogenisation, dissipation and operator theory''.
AVK has been partially supported by the Russian Federation Government megagrant 14.Y26.31.0013
and by the  RFBR grant 19-01-00657-a.
LOS has been partially supported
by UNAM-DGAPA-PAPIIT IN110818 and SEP-CONACYT CB-2015 254062. LOS is grateful
for the financial support of PASPA-DGAPA-UNAM during his sabbatical
leave and thanks the University of Bath for their hospitality.

\end{document}